\newcommand{\R}{\mathbb{R}}
\newcommand{\C}{\mathbb{C}}
\newcommand{\N}{\mathbb{N}}
\newcommand{\GL}{{\rm GL}}
\newcommand{\gl}{{\rm Mat}}
\newcommand{\Mat}{{\rm Mat}}
\newcommand{\Bundle}{\mathcal{B}}
\newcommand{\deltamin}{\delta_{{\rm min}}}
\newcommand{\avg}[1]{\left< #1 \right>} 
\newcommand{\abs}[1]{\bigl| #1 \bigr|} 
\newcommand{\norm}[1]{\lVert#1\rVert} 
\newcommand{\bnorm}[1]{\Bigl\| #1\Bigr\|} 
\newcommand{\transl}{{T}} 
\newcommand{\less}{\lesssim}
\newcommand{\more}{\gtrsim}
\newcommand{\ep}{\epsilon} 
\newcommand{\ka}{\kappa}
\newcommand{\cocycles}{\mathcal{C}}
\newcommand{\observables}{\Xi}
\newcommand{\param}{\underline{p}}
\newcommand{\params}{\mathscr{P}}
\newcommand{\gapc}{\kappa}
\newcommand{\nzerobar}{\underline{n_0}}
\newcommand{\mesfs}{\mathscr{I}}
\newcommand{\dev}{\ep}                
\newcommand{\devf}{\underline{\dev}}  
\newcommand{\mes}{\iota}              
\newcommand{\mesf}{\underline{\mes}}  
\newcommand{\scale}{\mathscr{N}}  
\newcommand{\An}[1]{A^{({#1})}}  
\newcommand{\Bn}[1]{B^{({#1})}}  
\newcommand{\Lan}[2]{L^{({#1})}_{#2}}
\newcommand{\Gr}{{\rm Gr}}
\newcommand{\Pp}{\mathbb{P}}
\newcommand{\FF}{\mathscr{F}}
\newcommand{\hatv}{\hat{v}}
\newcommand{\U}{\mathscr{U}}
\newcommand{\ind}{\mathds{1}}
\newcommand{\B}{\mathscr{B}}
\newcommand{\dist}{{\rm dist}}
\newcommand{\Proj}{\mathbb{P}(\R^{m})}
\newcommand{\filt}{F}
\newcommand{\Decompsp}{\mathscr{D}}
\newcommand{\mostexp}{\overline{\mathfrak{v}}}
\newcommand{\aangle}{\alpha}
\newcommand{\sgap}{\sigma}
\newcommand{\rgap}{{\rm gr}}
\newcommand{\rift}{\rho}
\newcommand{\Ker}{\rm K}
\newcommand{\Range}{\rm R}
\newcommand{\Filt}{\mathfrak{F}}
\newcommand{\Filatp}[1]{\mathfrak{F}_{\supset{#1}}}
\newcommand{\dec}{E_{\cdot}}
\newcommand{\Dec}{\mathfrak{D}}
\newcommand{\Decatp}[1]{\mathfrak{D}_{\supset{#1}}}
\newcommand{\medir}[1]{\mostexp^{({#1})}}
\newsavebox{\@brx}
\newcommand{\llangle}[1][]{\savebox{\@brx}{\(\m@th{#1\langle}\)}%
  \mathopen{\copy\@brx\mkern2mu\kern-0.9\wd\@brx\usebox{\@brx}}}
\newcommand{\rrangle}[1][]{\savebox{\@brx}{\(\m@th{#1\rangle}\)}%
  \mathclose{\copy\@brx\mkern2mu\kern-0.9\wd\@brx\usebox{\@brx}}}
\newcommand{\normtwo}[1]{
{\left\vert\kern-0.25ex\left\vert\kern-0.25ex\left\vert #1 
    \right\vert\kern-0.25ex\right\vert\kern-0.25ex\right\vert} } 
\newcommand{\drel}{d_{{\rm rel}}}
\theoremstyle{plain}
\newtheorem{theorem}{Theorem}[section]
\newtheorem{proposition}{Proposition}[section]
\newtheorem{corollary}[proposition]{Corollary}
\newtheorem{lemma}[proposition]{Lemma}
\newtheorem{definition}{Definition}[section]
\numberwithin{equation}{section}
\newtheorem{remark}{Remark}[section]
\title[Continuity of the Oseledets Decomposition]{Continuity of the Oseledets Decomposition}
\date{}
\begin{document}

\author[P. Duarte]{Pedro Duarte}
\address{Departamento de Matem\'atica and CMAFIO\\
Faculdade de Ci\^encias\\
Universidade de Lisboa\\
Portugal 
}
\email{pmduarte@fc.ul.pt}

\author[S. Klein]{Silvius Klein}
\address{ Department of Mathematical Sciences\\
Norwegian University of Science and Technology (NTNU)\\
Trondheim, Norway\\ 
and IMAR, Bucharest, Romania }
\email{silvius.klein@math.ntnu.no}

\begin{abstract} We consider an abstract space of measurable linear cocycles and we assume the availability in this space of some appropriate uniform large deviation type estimates. Under these hypotheses we establish the continuity of the Oseledets filtration and decomposition as functions of the cocycle.  The same assumptions lead in~\cite{LEbook} 
to a general continuity theorem for the Lyapunov exponents. This result and other technical estimates derived in~\cite{LEbook}, 
along with the inductive scheme based on the Avalanche Principle, are the main ingredients of the arguments in this paper.

We also  give a new proof of the classical Multiplicative Ergodic Theorem of V. Oseledets, using the Avalanche Principle (AP).

This is a draft of a chapter  in our forthcoming research monograph~\cite{LEbook}.  
\end{abstract}

\maketitle

\section{Introduction and statements}
\label{oseledets_introduction}
Let $(X, \mu, T)$ be an ergodic dynamical system and let $A \colon X \to \Mat (m, \R)$ be a measurable function defining a linear cocycle on the bundle space $X \times \R^m$ by
$$X \times \R^m \ni (x, v) \mapsto (T x, A (x) v) \in X \times \R^m.$$

In his 1968 paper \cite{Oseledets} in the Transactions of the Moscow Mathematical Society,  V. Oseledets proved his now famous Multiplicative Ergodic Theorem. Assuming the integrability of the cocycle,  this theorem proves the existence of a measurable and $(T, A)$-invariant filtration of the fiber 
$$\{0\}= F_{k+1}(x) \subsetneq F_k(x)
\subsetneq \ldots \subsetneq F_{2}(x) \subsetneq F_1(x) = \R^m,$$
and the existence of a sequence $\lambda_1>\lambda_2>\ldots >\lambda_k\geq -\infty$, \ such that  for $\mu$-a.e. phase $x \in X$ and 
for every vector $v\in F_j(x)\setminus F_{j+1}(x)$,
$$\lim_{n\to +\infty} \frac{1}{n}\,\log \norm{\An{n}(x)\,v} = \lambda_{j}\,.$$

The numbers $\lambda_1, \lambda_2, \ldots, \lambda_k$, measuring the rate of expansion of the cocycle along the invariant Oseledets subspaces, are the {\em distinct} Lyapunov exponents (LE). 

The {\em repeated} Lyapunov exponents  $L_1 (A) \ge L_2 (A) \ge \ldots \ge L_m (A)$ are defined by the Furstenberg-Kesten (or Kingman's sub-additive ergodic) theorem.  

Making further assumptions (e.g. the base dynamic and the fiber action are invertible), there is a measurable and $(T, A)$-invariant  decomposition (also called splitting) into subspaces
$\R^m =\oplus_{j=1}^{k+1} E_i(x)$,
such that for $\mu$-a.e. $x \in X$ and for every $v\in E_j(x)\setminus \{0\}$ we have
$ \lim_{n\to \pm \infty} \frac{1}{n}\,\log \norm{\An{n}(x)\,v} =\lambda_{j}$.

There are several methods of proving the multiplicative ergodic theorem. We mention the proofs of I. Ya. Gol'dsheid and G. Margulis in \cite{Goldsheid-Margulis-met} (for a detailed presentation of this proof see \cite{Arnold}),  R. Ma\~n\'e (see his monograph  \cite{Mane}),  P. Walters (see \cite{Walters-met})  as well as variants of these proofs by M. Viana (see his recent monograph \cite{Viana-book}) or J. Bochi (see the lecture notes \cite{Bochi-met} on his web page). 

Many  extensions of this theorem are available, including those of I. Ya. Gol'dsheid and G. Margulis in \cite{Goldsheid-Margulis-met} or D. Ruelle in \cite{Ruelle1, Ruelle2}.

\medskip

In this paper we give a new proof of the multiplicative ergodic theorem, which is based upon  the AP. More precisely, we use the estimate in the AP on the distance between the most expanding direction of a product of matrices and the most expanding direction of the first term in the product. 

We assume the base dynamics to be invertible. However, the existence of the Oseledets filtration for non-invertible base dynamics can be reduced to the invertible case by a natural extension construction (see Section 1.3 in \cite{Petersen}).

The Oseledets decomposition is usually obtained under the assumption that both the base dynamics and the fiber action are invertible. Our pooof does not require invertibility of the fiber action.

\medskip

We construct the Oseledets filtration as the $\mu$-a.e. limit as $n \to \infty$ of filtrations corresponding to the  singular value decomposition of the iterates $\An{n} (x)$ of the cocycle $A$. The convergence of these (finite scale) filtrations follows from our extension of the AP (see~\cite{LEbook, LEbook-chap2}) concerning estimates on the distance between most expanding singular directions of products of matrices. The assumptions of the AP are ensured by Kingman's ergodic theorem, which provides $\mu$-a.e. convergence to the Lyapunov exponents of certain quantities related to the iterates $\An{n} (x)$ of the cocycle.

\medskip

If a {\em quantitative} version of the convergence in Kingman's ergodic theorem is available, that is, if our system satisfies fiber large deviation type (LDT) estimates, then we establish a {\em rate of convergence} of the finite scale filtrations to the Oseledets filtration. 

Moreover, if the LDT is {\em uniform} in the cocycle, we derive {\em continuity} of the Oseledets filtration as a function of the cocycle, in an appropriate average sense. The argument is again inductive and based upon the AP, whose assumptions are shown to hold off of small sets of phases related to the exceptional sets in the LDT estimates. 

\medskip

We construct the subspaces of the Oseledets {\em decomposition} of the cocycle $A$ as intersections between components of the orthogonal complements of the filtration of $A$ and components of the filtration of the adjoint cocycle. 

The continuity of the Oseledets decomposition  (under the same assumption of having uniform LDT estimates) is derived using a similar scheme as the one employed for the continuity of the filtration. However, this needs to be combined with a careful analysis of the Lipschitz behavior of the intersection of vector subspaces, which we obtained in Chapter 2 of~\cite{LEbook} (see also our preprint~\cite{LEbook-chap2}). 

\medskip

A precise formulation of the continuity of the Oseledets filtration and decomposition as functions of the cocycle requires some preparation. 

We introduce (see the preamble to Section~\ref{continuity_oseledets}) a general topological space of measurable cocycles. We then allow perturbations of a given cocycle within the {\em whole space}.

We define spaces of measurable filtrations and decompositions and endow them with appropriate topologies (see Subsection~\ref{smf}). 

In the case of {\em higher dimensional} (i.e. $\Mat (m, \R)$-valued  with $m>2$) cocycles, as we perturb a given cocycle, the dimensions of the corresponding subspaces of its Oseledets filtration or decomposition {\em may change}. We define some natural projections / restrictions of these filtrations / decompositions, which will allow us to formulate and to prove stronger continuity results. In Subsection~\ref{direction} we establish the continuity of the most expanding direction, in Subsection~\ref{cof} that of the Oseledets filtration, and finally in Subsection~\ref{cod} we obtain the continuity of the Oseledets decomposition. 

We note that as with the Lyapunov exponents, our continuity results are {\em quantitative}. 

\medskip

To give an idea of these continuity results, we formulate here a simplified, particular version of our results in Section~\ref{continuity_oseledets}.

Let $(X, \mu, T)$ be an ergodic dynamical system with $T$ invertible. 

Let $\cocycles_m$ be a space of measurable cocycles $A \colon X \to \Mat (m, \R)$, endowed with a distance ($\dist$) at least as fine as the $L^\infty$-distance. 

We make the following assumptions:

\begin{enumerate}
\item[i.] The base dynamics satisfies an LDT estimate for a rich enough (relative to $\cocycles_m$) set of observables.
\item[ii.] Every cocycle $A \in \cocycles_m$ satisfies a uniform (relative to $\dist$) integrability condition.
\item[iii.] Every cocycle $A \in \cocycles_m$ with $L_1 (A) > L_2 (A)$ satisfies a fiber LDT which is uniform in a neighborhood of $A$.
\end{enumerate}

If $A \in \cocycles_m$ is such that $L_1 (A) > L_2 (A)$, then its Oseledets decomposition contains a one dimensional subspace $E_1 (A) (x)$ corresponding to the maximal Lyapunov exponent $L_1 (A)$. This defines (after identifying one dimensional subspaces with points in the projective space $\Proj$) a measurable function $E_1 (A) \colon X \to \Proj$.

By the continuity of the Lyapunov exponents established in Chapter 3 of~\cite{LEbook} (see also our preprint~\cite{LEbook-chap3}), if $A \in \cocycles_m$ is such that $L_1 (A) > L_2 (A)$, then for any nearby cocycle $B$ we have $L_1 (B) > L_2 (B)$. Hence $E_1 (B)$ is well defined as well, and we will prove the following.

\begin{theorem}\label{cont dec simple}
With the settings and assumptions described above, if $A \in \cocycles_m$ with $L_1 (A) > L_2 (A)$, then locally near $A$ the map
$$\cocycles_m \ni B \mapsto E_1 (B) \in L^1 (X, \Proj)$$
is continuous, with a modulus of continuity depending explicitly on the parameters of the LDT estimates.
In fact, a more precise pointwise statement holds. There are constants $\delta > 0$, $\alpha >0$ and a modulus of continuity function $\omega (h)$, all dependent only on $A$, such that for any cocycles $B_i$, $i=1, 2$ with $\dist (B_i, A) < \delta$,
$$\mu \ \{ x \in X \colon d (E_1 (B_1) (x), \, E_1 (B_2) (x) ) >  \dist (B_1, B_2)^\alpha \ \}  < \omega (\dist (B_1, B_2) )\,,$$
where as $h \to 0$, $ \omega (h) \to 0$  at a rate that depends explicitly on the LDT estimates.
\end{theorem}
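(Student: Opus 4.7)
The plan is to approximate $E_1(B)(x)$, for every cocycle $B$ in a neighborhood of $A$, by the most expanding singular direction $v_n(B)(x)$ of the iterate $\An{n}_B(x) := B(T^{n-1} x) \cdots B(T x)\, B(x)$, and then to control separately two errors: (i) the rate at which $v_n(B)(x) \to E_1(B)(x)$ as $n \to \infty$, and (ii) the dependence of $v_n(B)(x)$ on $B$ at fixed finite scale $n$. The desired pointwise statement will follow from the triangle inequality after optimizing $n$ as a function of $\dist(B_1, B_2)$.

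First, I would restrict to a neighborhood of $A$ in $\cocycles_m$ on which the gap $\LE{1}(B) - \LE{2}(B)$ stays bounded below by some $\gamma > 0$ (using the continuity of the Lyapunov exponents from Chapter~3 of~\cite{LEbook}) and on which the fiber LDT parameters are uniform in the sense of assumption iii. On such a neighborhood, the Avalanche Principle scheme sketched in the introduction applies to each $B$ and produces, off an exceptional set of $\mu$-measure at most $\iota(n)$ with $\iota(n) \to 0$ determined by the LDT rate, the estimate $d(v_n(B)(x), E_1(B)(x)) \less e^{-c n}$, where $c = c(\gamma, \text{LDT}) > 0$. This handles the outer two terms of the triangle inequality uniformly in $B$.

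For the middle term I would invoke the perturbation bound for the top singular direction: if $M, M'$ are $m \times m$ matrices with $s_1(M) - s_2(M) > 0$ and $\|M - M'\| < \tfrac12 (s_1(M) - s_2(M))$, then $d(v(M), v(M')) \less \|M - M'\|/(s_1(M) - s_2(M))$. Applied with $M = \An{n}_{B_1}(x)$ and $M' = \An{n}_{B_2}(x)$, the numerator is bounded by $n\, C^{n} \dist(B_1, B_2)$, where $C$ is a uniform pointwise bound for $\|B\|$ in the neighborhood (available because $\dist$ dominates the $L^\infty$-distance and by assumption ii), while by the uniform fiber LDT the denominator is at least $\tfrac12 e^{n(\LE{1}(A) - \ep)}$ off a further exceptional set of measure $O(\iota(n))$. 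Combining these,
\[
d(v_n(B_1)(x), v_n(B_2)(x)) \less n\, e^{n(\log C - \LE{1}(A) + \ep)} \dist(B_1, B_2)
\]
off a set of $\mu$-measure at most $2\iota(n)$.

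Finally, choosing $n \asymp c_0 \lvert \log \dist(B_1, B_2) \rvert$ with $c_0 > 0$ small enough balances the two exponential factors and yields a bound of the form $\dist(B_1, B_2)^\alpha$ for some $\alpha > 0$ on the set where all estimates hold simultaneously; the complementary set has $\mu$-measure at most $\omega(\dist(B_1, B_2))$ with $\omega(h) \to 0$ built quantitatively from $\iota$. Since $d$ on $\Proj$ is bounded, the $L^1(X, \Proj)$ continuity of $B \mapsto E_1(B)$ follows by integration of the pointwise estimate. The hardest step will be (ii): one must ensure that the singular-value gap $s_1(\An{n}_B(x)) - s_2(\An{n}_B(x))$ is comparable to $e^{n \LE{1}(A)}$ off a \emph{small} exceptional set whose measure is explicitly controlled by the LDT parameters, uniformly for $B$ in the neighborhood. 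This is exactly where assumption iii and the quantitative AP-based finite-scale estimates of~\cite{LEbook} enter, replacing the soft $\mu$-a.e.\ appeals to Kingman's ergodic theorem used in the qualitative MET.
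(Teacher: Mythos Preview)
Your three-step scheme --- (i) uniform-in-$B$ exponential convergence of the finite-scale direction via the AP with LDT-controlled exceptional sets, (ii) finite-scale Lipschitz dependence via singular-vector perturbation, (iii) choice of $n \asymp |\log \dist(B_1,B_2)|$ --- is exactly the paper's argument (Propositions~\ref{prop-speed-conv-most-dir} and~\ref{finite-scale-cont-most-dir}, assembled in Theorem~\ref{thm:most-dir}).

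There is, however, a genuine identification error. The direction $v_n(B)(x) = \mostexp(\Bn{n}(x))$ does \emph{not} converge to the Oseledets subspace $E_1(B)(x)$; it converges to $\medir{\infty}(B)(x)$, which is the orthogonal complement of the second component of the Oseledets \emph{filtration}, $F_2(B)(x)^\perp$. These are different objects: for a constant upper-triangular cocycle $\begin{pmatrix} 2 & 100 \\ 0 & 1/2 \end{pmatrix}$, the space $E_1$ is spanned by $e_1$, while $\medir{\infty}$ is a line far from $e_1$. In the paper's construction (proof of Theorem~\ref{thm:oseledets2}), $E_1(B)(x) = \medir{\infty}(B^\ast)(x)$, the most expanding direction of the \emph{adjoint} cocycle $(T^{-1}, B^\ast)$, whose iterates are $(\Bn{n}(T^{-n}x))^\ast$. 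The fix is mechanical: run your entire argument for $B^\ast$ rather than $B$ (the hypotheses are symmetric under adjoint since $L_i(B)=L_i(B^\ast)$ and the LDT assumptions pass to $B^\ast$), and then your $v_n(B^\ast)(x)$ does converge to $E_1(B)(x)$. With that correction, your proof coincides with the paper's.
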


This result (and the more general ones in Section~\ref{continuity_oseledets}) are applicable to both random (i.i.d. or Markov) irreducible cocycles  and to quasi-periodic cocycles, since LDT estimates will be established for these models (see Chapters 5 and 6 in~\cite{LEbook}).

Continuity of the Oseledets decomposition for $\GL (2, \C)$-valued random i.i.d. cocycles was obtained by C. Bocker-Neto and M. Viana in \cite{Bocker-Viana}. Their result is not quantitative but it requires no generic assumptions (such as  irreducibility) on the space of cocycles.
A different type of continuity property, namely stability of the Lyapunov exponents and of the Oseledets decomposition under {\em random perturbations} of a fixed cocycle, was studied in \cite{Led-Young, Ochs}.

\section{The ergodic theorems}
\label{bkoet}

We formulate the ergodic theorems of Birkhoff and Kingman,  then define the LE of a linear cocycle over a measurable bundle. We obtain a new proof of the multiplicative ergodic theorem of Oseledets using the AP.  

\subsection{The ergodic theorems of Birkhoff and Kingman}
\label{ket}
\newcommand{\Rbar}{ \overline{\mathbb{R}} }

The proofs of Birkhoff's pointwise ergodic theorem and Kingman's ergodic theorem can be found in most monographs covering topics in ergodic theory (see for instance \cite{Viana-book, Walters}). It is also worth mentioning in this context the simple proofs by Y. Katznelson and B. Weiss (see \cite{Katz-Weiss}). The method in \cite{Katz-Weiss} is based on a stopping time argument, an instance of which will appear in our proof of the MET in Subsection~\ref{met}, and it was also used in Chapter 3 of~\cite{LEbook} to establish a type of uniform upper semicontinuity of the maximal LE.
\begin{theorem}[Birkhoff's ergodic theorem]\label{bet}
Let $(X, \mu, T)$ be an ergodic dynamical system, and let $\xi \in L^1 (X, \mu)$ be an observable. Then 
\begin{equation*}\label{bet-eq}
\frac{1}{n}  \sum_{j=0}^{n-1} \xi (T^j x) \to \int_X \xi (x) \mu (d x) \quad \text{ for } \ \mu \text{ a.e. } x \in X.
\end{equation*}
\end{theorem}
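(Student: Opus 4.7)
The natural approach, foreshadowed by the authors' remark about Katznelson--Weiss (\cite{Katz-Weiss}), is the stopping time proof. Denote $S_n \xi(x) := \sum_{j=0}^{n-1} \xi(T^j x)$ and set
$$\overline{\xi}(x) := \limsup_{n \to \infty} \frac{S_n \xi(x)}{n}, \qquad \underline{\xi}(x) := \liminf_{n \to \infty} \frac{S_n \xi(x)}{n}.$$
From $\frac{S_{n+1}\xi(x)}{n+1} = \frac{n}{n+1}\cdot\frac{S_n\xi(Tx)}{n} + \frac{\xi(x)}{n+1}$ one sees $\overline{\xi}$ and $\underline{\xi}$ are $T$-invariant, hence by ergodicity a.e.\ equal to constants $\overline{c}$ and $\underline{c}$. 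The plan is to squeeze both constants against $\int \xi\, d\mu$.

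I would first reduce to the bounded case: by splitting $\xi = \xi^+ - \xi^-$ and truncating $\xi \mapsto \xi \wedge M$, the contribution of the tails is controlled by $\|\xi - \xi\wedge M\|_{L^1}$, which vanishes as $M \to \infty$; standard maximal estimates let one absorb the tails. So assume $\xi$ is bounded, say $|\xi| \le C$. It suffices to prove $\overline{c} \le \int \xi\, d\mu$, since applying this to $-\xi$ then yields $\underline{c} \ge \int \xi\, d\mu$, and the two inequalities force equality.

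For the upper bound, fix $\epsilon > 0$ and introduce the stopping time
$$n(x) := \min\Bigl\{\, n \ge 1 : \tfrac{1}{n} S_n \xi(x) \ge \overline{c} - \epsilon\, \Bigr\},$$
which is finite a.e.\ by definition of $\overline{c}$. Choose $N$ so large that the bad set $B_N := \{n(x) > N\}$ has $\mu(B_N) < \epsilon/C$. Redefine $\tilde{\xi}(x) := \xi(x)$ off $B_N$ and $\tilde{\xi}(x) := C$ on $B_N$, so that along any orbit one can always take a single step when $x \in B_N$ (with $\tilde{\xi}(x) \ge \overline c - \epsilon$ automatically). Then for every $x$ and every $M \ge N$, greedily partition the orbit segment $x, Tx, \dots, T^{M-1}x$ into consecutive blocks of length $n(T^{\,\cdot} x) \le N$ (or length $1$ on $B_N$), leaving a terminal remainder of length $<N$. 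On each block the partial sum of $\tilde{\xi}$ is $\ge (\overline{c}-\epsilon)\cdot(\text{block length})$, while the remainder contributes at least $-CN$. Hence
$$\frac{1}{M} S_M \tilde{\xi}(x) \ge (\overline{c} - \epsilon) - \frac{CN}{M}.$$
Integrating this inequality over $x \in X$ (using $T$-invariance of $\mu$, so $\int S_M \tilde{\xi}\, d\mu = M\int \tilde{\xi}\, d\mu$) and letting $M \to \infty$ gives $\int \tilde{\xi}\, d\mu \ge \overline{c} - \epsilon$. Since $\tilde{\xi} - \xi$ is supported on $B_N$ and bounded by $2C$, $\int \tilde{\xi}\, d\mu \le \int \xi\, d\mu + 2C\mu(B_N) \le \int \xi\, d\mu + 2\epsilon$. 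Letting $\epsilon \to 0$ yields $\overline{c} \le \int \xi\, d\mu$, as required.

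The main technical obstacle is the orbit-partition bookkeeping: one must verify that the combinatorial decomposition using the stopping time genuinely tiles almost all of the segment $[0, M)$ with blocks of length $\le N$, that the terminal remainder is truly of bounded length independent of $M$, and that the modification $\xi \rightsquigarrow \tilde{\xi}$ does not spoil these estimates. The rest---$T$-invariance of $\overline{\xi}$, ergodicity, and the reduction to bounded $\xi$---is routine.
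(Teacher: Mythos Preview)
The paper does not actually prove Birkhoff's ergodic theorem: it is stated as background and the reader is referred to standard monographs (Viana, Walters) and to the Katznelson--Weiss paper \cite{Katz-Weiss}. Your proposal is precisely the Katznelson--Weiss stopping-time argument that the paper cites, and it is carried out correctly; there is nothing further to compare.
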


\medskip

A sequence of numbers $\{a_n\}_{n\geq 0}$ in $[-\infty, +\infty)$ is called {\em sub-additive} if
$$ a_{n+m}\leq a_n + a_m \quad \text{ for all }\; n,m\geq 0  \;. $$ 

\begin{lemma}[Fekete's Subadditive Lemma]
\label{Kingmans lemma} Given a  sub-additive sequence $\{a_n\}_{n\geq 0}$ the following limit converges
$$ \lim_{n\to \infty} \frac{ a_n }{n}  
= \inf_{n\geq 1} \frac{a_n}{n}  
\in [-\infty,+\infty)\;. $$
\end{lemma}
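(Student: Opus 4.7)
The plan is the classical Euclidean-division argument. Set
\[
L := \inf_{n \geq 1} \frac{a_n}{n} \in [-\infty,+\infty).
\]
Since $a_n/n \geq L$ for every $n \geq 1$, we immediately obtain $\liminf_{n \to \infty} a_n/n \geq L$, so all the work lies in proving the matching bound $\limsup_{n \to \infty} a_n/n \leq L$. Before starting, I would dispatch the degenerate case where $a_k = -\infty$ for some $k \geq 1$: iterating sub-additivity yields $a_{jk} \leq j\, a_k = -\infty$, and then for every sufficiently large $n$, writing $n = jk + s$ with $0 \leq s < k$ and using $a_n \leq a_{jk} + a_s$ (under the convention $-\infty + c = -\infty$ for $c \in [-\infty,+\infty)$), we get $a_n = -\infty$ eventually. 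In that case $L = -\infty$ and $a_n/n \to -\infty$ trivially, matching the claim.

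So assume $a_n \in \R$ for all $n \geq 1$. Note also that $a_0 \leq a_0 + a_0$ forces $a_0 \geq 0$ (or $a_0 = -\infty$, handled as above). For the main bound, fix an arbitrary $k \geq 1$ and, for each large $n$, perform the Euclidean division $n = q k + r$ with $0 \leq r < k$. Applying sub-additivity $q$ times gives
\[
a_n = a_{qk + r} \leq a_{qk} + a_r \leq q\, a_k + a_r,
\]
with the convention $a_0 \geq 0$ covering the case $r = 0$. Dividing by $n$,
\[
\frac{a_n}{n} \leq \frac{q k}{n} \cdot \frac{a_k}{k} + \frac{a_r}{n}.
\]

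Now I would let $n \to \infty$ with $k$ fixed: since $r \in \{0, 1, \ldots, k-1\}$, the quantity $a_r$ ranges over finitely many real numbers, so $a_r/n \to 0$; and $qk/n = 1 - r/n \to 1$. Therefore
\[
\limsup_{n \to \infty} \frac{a_n}{n} \leq \frac{a_k}{k}.
\]
As $k \geq 1$ was arbitrary, taking the infimum over $k$ gives $\limsup_{n \to \infty} a_n/n \leq L$, and combined with the $\liminf$ inequality this yields the existence of the limit together with the identification $\lim a_n/n = L$. If $L = -\infty$, the same estimate shows that for every $M > 0$ one can pick $k$ with $a_k/k < -M$, forcing $\limsup a_n/n \leq -M$, hence $a_n/n \to -\infty$. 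There is no real obstacle here; the only minor subtlety is the bookkeeping for the extended-real-valued case, which is resolved by the preliminary dichotomy above.
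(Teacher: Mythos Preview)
Your proof is correct and is the standard Euclidean-division argument for Fekete's lemma. The paper, however, does not supply a proof of this statement at all: it simply states the lemma and moves on, treating it as a classical prerequisite. So there is nothing to compare against; your argument fills in exactly the routine details one would expect.
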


\medskip


%

\begin{theorem}[Kingman's Ergodic Theorem]
Let $(X,\mu,T)$ be an ergodic dynamical system.
Given a sequence of measurable functions
$f_n:X\to\Rbar$ such that $f_1^+\in L^1(X,\mu)$ and
\begin{equation}\label{sub additivity}
 f_{n+m}\leq f_n + f_m\circ T^n \quad \text{ for all }\; n,m\geq 0 \;,
\end{equation}
then the sequence  $\{ \int f_n\,d\mu\}_{n\geq 0}$ is  sub-additive,  
and for $ \mu$-a.e. $x\in X$, $\frac{1}{n}\,f_n(x)$ converges to
the limit
$$ \lim_{n\to \infty} \frac{1}{n}\, \int f_n\,d\mu 
= \inf_{n\geq 1} \frac{1}{n}\, \int f_n\,d\mu 
\in [-\infty,+\infty)\;. $$
\end{theorem}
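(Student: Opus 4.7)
The plan is to reduce the theorem to two pointwise inequalities for $\overline{\alpha}(x) := \limsup_{n\to\infty}\frac{1}{n}f_n(x)$ and $\underline{\alpha}(x) := \liminf_{n\to\infty}\frac{1}{n}f_n(x)$, namely $\overline{\alpha}\leq\gamma$ and $\underline{\alpha}\geq\gamma$ almost everywhere, where $\gamma := \lim_{n\to\infty} \frac{1}{n}\int f_n\,d\mu$.

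Integrating the subadditivity hypothesis and using $T$-invariance of $\mu$ shows that the numerical sequence $\{\int f_n\,d\mu\}$ is subadditive, and Fekete's lemma then yields $\gamma = \inf_{n\geq 1} \frac{1}{n}\int f_n\,d\mu \in [-\infty,+\infty)$. Dividing $f_{n+1}\leq f_1 + f_n\circ T$ by $n+1$ and taking $\limsup$ (resp.\ $\liminf$) makes $\overline{\alpha}$ and $\underline{\alpha}$ $T$-subinvariant; since $T$ preserves $\mu$ they are actually $T$-invariant a.e., and by ergodicity each equals an a.e.\ constant $\overline{c}$, $\underline{c}$ (possibly $-\infty$).

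For the upper bound $\overline{c}\leq\gamma$, I iterate the hypothesis at scale $N$ to obtain $f_{kN}\leq \sum_{j=0}^{k-1} f_N\circ T^{jN}$, divide by $kN$, and invoke Birkhoff's theorem applied to $T^N$ acting on $f_N$: the right-hand average converges a.e.\ to some $T^N$-invariant $\tilde f_N$ with $\int \tilde f_N\,d\mu = \int f_N\,d\mu$, so $\limsup_k f_{kN}(x)/(kN) \leq \tilde f_N(x)/N$ a.e. A short subadditivity estimate $f_{kN+r}\leq f_{kN}+f_r\circ T^{kN}$ together with the pointwise consequence $f_r^+(T^{kN}x)/k\to 0$ a.e.\ (Birkhoff applied to $f_r^+$) shows that the subsequential $\limsup$ agrees with the full one; integrating then yields $\overline{c}\leq \frac{1}{N}\int f_N\,d\mu$, and taking the infimum over $N$ gives $\overline{c}\leq\gamma$.

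The delicate step is $\underline{c}\geq\gamma$, carried out by the Katznelson--Weiss stopping-time method flagged in the excerpt. Assume for contradiction $\underline{c}<\gamma$ and pick $\lambda$ with $\underline{c}<\lambda<\gamma$. The stopping time $\tau(x) := \min\{n\geq 1 : f_n(x)\leq n\lambda\}$ is a.e.\ finite by the definition of $\liminf$, so for any $\delta>0$ one can choose $N$ so large that the bad set $E := \{\tau > N\}$ satisfies both $\mu(E)<\delta$ and $\int_E f_1^+\,d\mu <\delta$. For each $x$ and each large $n$, traverse the orbit using subadditivity: at the current position $T^s x$ take a ``good jump'' of length $\tau(T^s x)\leq N$ (contributing $\leq \lambda\,\tau(T^s x)$ to the bound on $f_n(x)$) when $T^s x\notin E$, and otherwise take a single step whose contribution is bounded by $f_1^+(T^s x)$; the final length-$<N$ tail is absorbed into a boundary term of size $O(1)$ in $n$. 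Integrating and using Birkhoff's theorem applied to $\mathbf{1}_E$ and $f_1^+\mathbf{1}_E$ to control the density of bad steps and their total weight, one obtains the asymptotic bound
\[
\frac{1}{n}\int f_n\,d\mu \ \leq\ \lambda + (|\lambda|+1)\delta + o(1) \quad \text{as }n\to\infty,
\]
which for small enough $\delta$ contradicts $\frac{1}{n}\int f_n\,d\mu \to \gamma > \lambda$. Hence $\underline{c}\geq\gamma$, and together with $\underline{c}\leq\overline{c}\leq\gamma$ this gives $\lim_n \frac{1}{n}f_n = \gamma$ a.e. The main obstacle will be the stopping-time bookkeeping---tracking the boundary tail, the densities of good versus bad steps, and the integrability of $f_1^+\mathbf{1}_E$---with the degenerate case $\gamma = -\infty$ handled vacuously, since the assumption $\underline{c}<\gamma$ is then impossible.
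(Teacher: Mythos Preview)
The paper does not actually prove Kingman's Ergodic Theorem: it states the result as background and refers the reader to standard sources (Viana, Walters) and specifically to the Katznelson--Weiss paper for a short proof via a stopping-time argument. Your proposal is precisely a sketch of the Katznelson--Weiss proof, which the paper explicitly cites as the intended reference, so you are in line with the paper's stance rather than with any argument contained in it.

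Your sketch is essentially sound. A couple of small points worth tightening: for the upper bound you invoke Birkhoff for $T^N$ acting on $f_N$, so you should note that $f_N^+\in L^1$ (which follows from $f_N\leq\sum_{j=0}^{N-1}f_1\circ T^j$) to justify the a.e.\ convergence; and in the subinvariance step for $\overline{\alpha},\underline{\alpha}$ you need $f_1(x)$ finite a.e., which is guaranteed by $f_1^+\in L^1$. The stopping-time bookkeeping you flag is indeed the only place requiring care, but the outline you give is the standard one and works.
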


Let $\Bundle\subseteq X\times \R^m$ be a measurable bundle determined by some measurable function
$E:X\to \Gr(\R^m)$. This means that
$$ \mathcal{B}=\{\,(x,v)\,\colon\, x\in X,\, v\in E(x)\,\}\;.$$

We denote by $\mathcal{B} (x)$ the fiber over the base point $x$ and note that as a set, it coincides with the subspace $E (x)$.

\begin{definition}
\label{def linear cocycle}
A linear cocycle on $\mathcal{B}$ over a measure preserving dynamical system $(X,\mu,T)$ is a measurable map $F_A:\Bundle \to \Bundle$,  defined by a measurable family of linear maps
$A(x):E(x)\to E(T x)$, \, 
$F_A(x,v):=( T x, A(x) v )$.

We refer to $F_A$ as the cocycle $(T,A)$, or simply as $A$.
\end{definition}

\begin{definition}
\label{def integrable cocycle}
A cocycle $A$ is said to be {\em $\mu$-integrable}  if 
$$ \int_X \log^+ \norm{A(x)}\, d\mu(x)<+\infty \;.$$
\end{definition}

\begin{proposition} 
\label{prop  L1 = lim log An}
Given a $\mu$-integrable cocycle $A$,
 for $\mu$ almost every $x\in X$,
$$ L_1(A)= \lim_{n\to \infty} \frac{1}{n}\,
\log \norm{\An{n}(x)}  \;.$$
\end{proposition}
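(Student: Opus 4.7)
The plan is to apply Kingman's Ergodic Theorem directly to the sequence of measurable functions $f_n(x) := \log \norm{\An{n}(x)}$, and then observe that the resulting almost-sure limit is by definition $L_1(A)$.

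First I would verify the two hypotheses of Kingman's theorem. For the integrability of $f_1^+$, note that $f_1^+(x) = \log^+ \norm{A(x)}$, which lies in $L^1(X, \mu)$ precisely by the assumption that $A$ is $\mu$-integrable (Definition \ref{def integrable cocycle}). For the sub-additivity condition \eqref{sub additivity}, I would use the cocycle identity
\begin{equation*}
\An{n+m}(x) = \An{m}(T^n x)\,\An{n}(x),
\end{equation*}
which follows by iterating Definition \ref{def linear cocycle}. Sub-multiplicativity of the operator norm then gives
\begin{equation*}
\norm{\An{n+m}(x)} \le \norm{\An{m}(T^n x)}\cdot \norm{\An{n}(x)},
\end{equation*}
and taking logarithms (extended to take the value $-\infty$ when the norm vanishes) yields $f_{n+m} \le f_n + f_m \circ T^n$, as required.

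With both hypotheses verified, Kingman's Ergodic Theorem produces a $\mu$-almost sure limit
\begin{equation*}
\lim_{n \to \infty} \frac{1}{n}\, f_n(x) = \inf_{n \ge 1} \frac{1}{n}\int f_n\,d\mu \in [-\infty, +\infty),
\end{equation*}
and the right-hand side is exactly the quantity that defines $L_1(A)$ via the Furstenberg--Kesten theorem mentioned in the introduction. Thus $\frac{1}{n}\log\norm{\An{n}(x)} \to L_1(A)$ almost everywhere.

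There is really no hard step here: this proposition is essentially a restatement of Kingman's theorem applied to the log-norm of the iterates, and the only point requiring care is the standard verification of sub-additivity through the cocycle identity. The $\mu$-integrability hypothesis on $A$ is precisely tailored to supply the $L^1$ bound on $f_1^+$ that Kingman requires; note that no control on the negative part of $\log\norm{A(x)}$ is needed, which is why the limit is allowed to take the value $-\infty$.
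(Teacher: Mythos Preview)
Your proof is correct and follows exactly the same approach as the paper: apply Kingman's Ergodic Theorem to the sequence $f_n(x)=\log\norm{\An{n}(x)}$, checking sub-additivity via the cocycle identity and integrability of $f_1^+$ via the $\mu$-integrability hypothesis. In fact you have written out the verification in more detail than the paper itself does.
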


\begin{proof}
The sequence of functions $f_n:X\to \Rbar$,
$f_n(x)= \log \norm{\An{n}(x)}$ satisfies the sub-additivity property ~\eqref{sub additivity} and $f_1^+ = \log^+ \norm{A}\in L^1(X,\mu)$.

\end{proof}

\begin{proposition}\label{prop: LE and SVs}
If $A$ is a $\mu$-integrable cocycle then 
the following limit exists for  any $1\leq i\leq m$ and $\mu$-a.e. $x\in X$,
\begin{equation} 
\label{LE Li}
 \lim_{n\to \infty} \frac{1}{n}\,
\log s_i( \An{n}(x))  = L_i(A) \;.
\end{equation}
The number $L_i(A)\in [-\infty,+\infty)$ is called the
$i$-th Lyapunov exponent of  $A$.
Moreover, for all $2\leq i\leq m$,
\begin{equation}
\label{L1 Wedge i: 1}
 L_1(\wedge_i A)= L_i(A) + L_1(\wedge_{i-1} A)\;.
\end{equation}
\end{proposition}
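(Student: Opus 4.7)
The plan is to reduce the statement about singular values to the already-proven Proposition~\ref{prop  L1 = lim log An} applied to exterior power cocycles, using the classical identity that relates the norm of $\wedge_i M$ to the product of the top $i$ singular values of $M$.

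First, I would set up the exterior power cocycle. Given the $\mu$-integrable cocycle $A$, define $\wedge_i A \colon X \to \Mat \bigl( \binom{m}{i}, \R \bigr)$ by $(\wedge_i A)(x) := \wedge_i \bigl( A (x) \bigr)$. Multiplicativity of the exterior power, $\wedge_i (M N) = (\wedge_i M)(\wedge_i N)$, gives $(\wedge_i A)^{(n)}(x) = \wedge_i \bigl( \An{n}(x) \bigr)$. Since $\norm{\wedge_i M} = s_1(M) s_2(M) \cdots s_i(M) \le \norm{M}^i$, we have $\log^+ \norm{(\wedge_i A)(x)} \le i \, \log^+ \norm{A(x)}$, so $\wedge_i A$ is $\mu$-integrable.

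Next, apply Proposition~\ref{prop  L1 = lim log An} to each $\wedge_i A$, for $1 \le i \le m$. For $\mu$-a.e.\ $x \in X$ (taking the intersection of the full-measure sets over the finitely many values of $i$),
$$\frac{1}{n} \log \norm{\wedge_i \An{n}(x)} \;\longrightarrow\; L_1 (\wedge_i A).$$
Using the singular-value identity above,
$$\frac{1}{n} \sum_{j=1}^{i} \log s_j \bigl( \An{n}(x) \bigr) \;\longrightarrow\; L_1 (\wedge_i A).$$
With the convention $L_1 (\wedge_0 A) := 0$, subtracting the $i{-}1$ version from the $i$ version yields, for $\mu$-a.e.\ $x$,
$$\frac{1}{n} \log s_i \bigl( \An{n}(x) \bigr) \;\longrightarrow\; L_1 (\wedge_i A) - L_1 (\wedge_{i-1} A),$$
and I would take this difference as the definition of $L_i (A)$ for $i \ge 2$, consistent with the $i=1$ case from Proposition~\ref{prop  L1 = lim log An}. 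This simultaneously establishes (\ref{LE Li}) and the identity (\ref{L1 Wedge i: 1}), which is then just a rearrangement of the definition.

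The only subtle point — and the one I would spell out carefully — is justifying $\norm{\wedge_i M} = \prod_{j=1}^i s_j(M)$: this comes from the singular value decomposition of $M$, which induces an orthonormal basis of $\wedge_i \R^m$ diagonalizing $\wedge_i M$ with eigenvalues $s_{j_1}(M) \cdots s_{j_i}(M)$ for index tuples $j_1 < \cdots < j_i$, whose maximum is attained at $(1,2,\ldots,i)$. Everything else is routine once the exterior power cocycle is introduced; the main obstacle is essentially bookkeeping to make sure the almost-sure sets match up across the finitely many values of $i$, which is immediate by taking a finite intersection.
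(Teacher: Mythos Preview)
Your approach is essentially the same as the paper's, and it is correct in the main case, but there is a genuine gap when some of the exterior-power top exponents are $-\infty$. Your key step is to subtract
\[
\frac{1}{n}\sum_{j=1}^{i-1}\log s_j\bigl(\An{n}(x)\bigr)\ \longrightarrow\ L_1(\wedge_{i-1}A)
\]
from the corresponding sum up to $i$. This is valid only if $L_1(\wedge_{i-1}A)>-\infty$; when $L_1(\wedge_{i-1}A)=-\infty$ (which forces $L_1(\wedge_i A)=-\infty$ as well), you are subtracting two sequences that both diverge to $-\infty$, and nothing can be concluded about the limit of their difference. In particular, your ``definition'' $L_i(A):=L_1(\wedge_i A)-L_1(\wedge_{i-1}A)$ becomes the indeterminate form $(-\infty)-(-\infty)$.

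The paper handles this by first locating the critical index $k$, namely the smallest $j$ with $L_1(\wedge_j A)=-\infty$. For $1\le i\le k$ one has $L_1(\wedge_{i-1}A)>-\infty$, so $\norm{\wedge_{i-1}\An{n}(x)}$ is eventually nonzero and your ratio/subtraction argument goes through verbatim, yielding in particular $L_k(A)=-\infty$. For $i>k$ the paper does \emph{not} subtract; instead it uses the monotonicity $s_i(\An{n}(x))\le s_k(\An{n}(x))$ together with the already-established limit $\frac{1}{n}\log s_k(\An{n}(x))\to -\infty$ to conclude by comparison that $L_i(A)=-\infty$. Adding this short case distinction to your write-up fixes the gap.
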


\begin{proof}
Consider the exterior power cocycles $\wedge_i A$
where $1\leq i \leq m+1$.
Since
$$ \log \norm{\wedge_i \, A} \leq i\,\log \norm{A} \;,$$
the integrability  condition $\int_X \log^+ \norm{A}\, d\mu<+\infty$ for $A$
implies that all cocycles $\wedge_i \, A$ are also $\mu$-integrable.
Because $\wedge_{m+1} A(x)\equiv 0$ we have $L_1(\wedge_{m+1} \, A)=-\infty$.
Let $k$ be the first integer $1\leq j \leq m+1$ such that $L_1(\wedge_j \,A)=-\infty$. Then $L_1(\wedge_{k-1} \,A)>L_1(\wedge_{k} \,A)=-\infty$.
It is easy to see that for any matrix $g \in \Mat (m, \R)$ and for any $1\le i \le k$ we have  $\displaystyle s_i (g) = \frac{\wedge_i g}{\wedge_{i-1} g}$, hence 
$$ s_i(\An{n}(x)) = 
\frac{\norm{\wedge_i \,\An{n}(x)}}{\norm{\wedge_{i-1} \, \An{n}(x)}} \quad \text{ for all } 1 \le i \le k\;.$$
Note that $\norm{\wedge_{i-1} \,\An{n}(x)}$ is eventually non-zero because $L_1(\wedge_{i-1}\,A)>-\infty$. 
Hence, taking logarithms and applying Kingman's theorem,
 the limit~\eqref{LE Li} exists and the relation ~\eqref{L1 Wedge i: 1} holds.
Note also that for $i=k$ we get
$$ L_k(A)= L_1(\wedge_k A) - L_1(\wedge_{k-1}\, A)=-\infty \;.$$
For $k\leq i \leq m$, since $s_i(\An{n}(x))\leq s_k(\An{n}(x))$, by comparison
we infer that $L_i(A)=-\infty$ as well.

\end{proof}

\begin{corollary}\label{L1 wedge = sum Lj}
If $\int_X \log^+ \norm{A(x)}\, d\mu(x)<+\infty$ then 
 for $\mu$-a.e. $x\in X$, and  $1\leq i\leq m$,
 $$   \lim_{n\to \infty} \frac{1}{n}\,
\log \norm{\wedge_i \, \An{n}(x)}  = L_1(A) + \ldots + L_i(A) \;. $$
\end{corollary}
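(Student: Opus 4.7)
The plan is to derive this identity directly from the preceding Proposition~\ref{prop: LE and SVs} by iterating the telescoping relation~\eqref{L1 Wedge i: 1}. The key observation is that $\wedge_i A$ is itself a linear cocycle over $(X,\mu,T)$, acting on the exterior power bundle $X\times \wedge_i \R^m$, whose top Lyapunov exponent $L_1(\wedge_i A)$ admits two different descriptions: one as the almost-sure limit of $\frac{1}{n}\log\|\wedge_i \An{n}(x)\|$, and one as the partial sum $L_1(A)+\cdots+L_i(A)$. Identifying the two yields the corollary.

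First I would invoke the multiplicativity of exterior powers, $(\wedge_i A)^{(n)}(x) = \wedge_i(\An{n}(x))$. Combined with the elementary bound $\log^+\|\wedge_i A(x)\|\le i\,\log^+\|A(x)\|$, the $\mu$-integrability hypothesis on $A$ transfers to $\mu$-integrability of $\wedge_i A$. Proposition~\ref{prop  L1 = lim log An} applied to this cocycle then furnishes
$$L_1(\wedge_i A) \;=\; \lim_{n\to \infty}\frac{1}{n}\,\log\|\wedge_i \An{n}(x)\|$$
for $\mu$-a.e.\ $x\in X$.

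Next I would unfold the telescoping identity~\eqref{L1 Wedge i: 1}. With the trivial base case $\wedge_1 A = A$ and hence $L_1(\wedge_1 A)=L_1(A)$, a finite induction on $i$ produces
$$L_1(\wedge_i A) \;=\; L_i(A) + L_1(\wedge_{i-1}A) \;=\; \cdots \;=\; L_1(A)+L_2(A)+\cdots+L_i(A).$$
Inserting this into the preceding display yields the stated limit.

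The only point requiring care is the case in which some $L_j(A)=-\infty$, i.e.\ $i$ exceeds the index $k$ appearing in the proof of Proposition~\ref{prop: LE and SVs}. There both sides of the desired identity equal $-\infty$, consistent with the convention $-\infty + c = -\infty$, and the $\mu$-a.e.\ convergence to $-\infty$ is already part of that proposition. I therefore anticipate no genuine obstacle: the corollary is an immediate consequence of the preceding result, essentially a repackaging of Kingman's theorem applied to the exterior power cocycles $\wedge_i A$.
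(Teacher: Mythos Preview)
Your proposal is correct and follows essentially the same route as the paper: apply the almost-sure convergence for the exterior power cocycle (which the paper packages inside Proposition~\ref{prop: LE and SVs}) and then iterate the telescoping relation~\eqref{L1 Wedge i: 1} to identify $L_1(\wedge_i A)$ with $L_1(A)+\cdots+L_i(A)$. The paper's proof is the one-line ``Apply proposition~\ref{prop: LE and SVs}, using~\eqref{L1 Wedge i: 1} inductively,'' which is exactly what you have unfolded.
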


\begin{proof}
Apply proposition~\ref{prop: LE and SVs},
using~\eqref{L1 Wedge i: 1} inductively. 
\end{proof}

\subsection{Review of Grassmann geometry concepts and notations}
\label{notations}


What follows is an outline of the notions described in Chapter 2 of~\cite{LEbook} (see also~\cite{LEbook-chap2}) which are needed in this paper. 

A sequence of integers $\tau=(\tau_1,\ldots, \tau_k)$ with
$1\leq \tau_1 <\tau_2<\ldots <\tau_k<m$  is called a signature.
Let $$s_1(g)\geq s_2(g)\geq \ldots \geq s_m(g)\geq 0$$
denote the ordered (repeated) singular values of a 
matrix $g\in\Mat(m,\R)$. We say that $g$ has a singular spectrum with a $\tau$-gap pattern, or shortly that it has a $\tau$-gap pattern, when
$$s_{\tau_j}(g)>s_{\tau_j+1}(g) \quad  \text{ for all } j=1,\ldots, k\,.$$ We say that it has an exact $\tau$-gap pattern when furthermore  
$s_{\tau_{j}+1}(g)=s_{\tau_{j+1}}(g)$ for all $j=0,1,\ldots, k$, with the conventions $\tau_0=0$ and $\tau_k=m$.

\medskip

Analogously, let $$L_1(A)\geq L_2(A)\geq \ldots \geq L_m(A)\geq -\infty$$ denote the ordered (repeated) Lyapunov exponents of a linear cocycle $A$.
We say that  $A$ has a Lyapunov spectrum with a $\tau$-gap pattern, or shortly that it has a $\tau$-gap pattern, when
$$L_{\tau_j}(A)>L_{\tau_j+1}(A) \quad \text{ for all } j=1,\ldots, k\,.$$ 
We say that it has an exact $\tau$-gap pattern when furthermore  
$L_{\tau_{j}+1}(A)=L_{\tau_{j+1}}(A)$ for all $j=0,1,\ldots, k$, with the conventions $\tau_0=0$ and $\tau_k=m$.

\medskip

Given a matrix $g\in\Mat(m,\R)$ with singular value gap ratio 
$$\rgap (g) := \frac{s_1(g)}{s_2(g)} > 1\,,$$
its   most expanding direction is the point
 $\mostexp(g)\in \Proj$  determined by any singular vector of $g$ associated to the first singular value $s_1(g)=\norm{g}$.

 More generally, if $1\leq k\leq m$ is such that $$\rgap_k (g) := \frac{s_k(g)}{s_{k+1}(g)} > 1\,,$$ the most expanding $k$-plane is the 
 $k$-dimensional vector subspace  $\mostexp_k(g)$  spanned by the singular vectors of $g$ associated to the first $k$ singular values of $g$. 
 
 Finally, when $g$ has a $\tau$-gap pattern, hence  $$\rgap_\tau (g) := \min_{1\le j \le k} \rgap_{\tau_j} (g) >1\,,$$  we  define the $\tau$-flag
 $$\mostexp_\tau(g)=(\mostexp_{\tau_1}(g),\ldots,  \mostexp_{\tau_1}(g))\in\FF_\tau(\R^m)\,.$$
 
 Given matrices $g_0, g_1 \in \Mat (m, \R)$, we define their {\em expansion rift} is
  $$\rift (g_0, g_1) := \frac{\norm{g_1 \, g_0}}{\norm{g_1} \norm{g_0}}\,.$$

\medskip

A $\tau$-flag in $\R^m$ is a finite strictly increasing sequence $F=(F_1,\ldots, F_k)$ of vector subspaces
$F_1\subset F_2\subset \ldots \subset F_k\subset \R^m$ such that $\dim F_j=\tau_j$ for all $j=1,\ldots, k$. The space of all $\tau$-flags in $\R^m$ is denoted here by $\FF_\tau(\R^m)$.

The orthogonal complement $F^\perp$ of a flag $F=(F_1,\ldots, F_k)$ is the flag $F^\perp=(F_k^\perp,\ldots, F_1^\perp)$ of its orthogonal complements, which has the complementary signature $\tau^\perp=(m-\tau_k,\ldots, m-\tau_1)$.

A $\tau$-decomposition of $\R^m$ is a family $E_{\cdot}=\{E_j\}_{1\leq j\leq k+1}$ of vector subspaces such that $\R^m=\oplus_{j=1}^{k+1} E_j$, and $\dim E_j=\tau_j-\tau_{j-1}$  for all $j=1,\ldots, k+1$,
again with the conventions $\tau_0=0$ and $\tau_k=m$.
We denote by $\Decompsp_\tau(\R^m)$ the space of all $\tau$-decompositions of the Euclidean space $\R^m$.

Given two flags $F\in\FF_\tau(\R^m)$ and $F\in\FF_{\tau^\perp}(\R^m)$, of complementary signatures, the quantity $\theta_{\sqcap}(F,F')$ measures the transversality between
each subspace $F_j$ in $F$ and the corresponding subspace
$F_{k-j+1}$ in $F'$. When $\theta_{\sqcap}(F,F')>0$
all these pairs $(F_j,F_{k-j+1})$ of subspaces have a transversal intersection and the following family of subspaces
$ F\sqcap F' =\{ F_j\cap F_{k-j+2}'\}_{1\leq j\leq k+1}$
is a $\tau$-decomposition (see Proposition 2.31 in Chapter 2 of~\cite{LEbook} or Proposition 3.14 in \cite{LEbook-chap2}).


The following table of notations provides references to these concepts as defined in our preprint~\cite{LEbook-chap2}.

\begin{table}[h!]
\begin{center}
\begin{tabular}{ccc}
Concept \qquad\quad & Takes values in \; & Definition \\
\hline
$\mostexp(g)$ & $\Proj$ &  2.6\\
$\mostexp_k(g) $ & $\Gr_k(\R^m)$ &  2.7\\
$\mostexp_\tau(g)$ & $\FF_\tau(\R^m)$ &  2.10\\
$F^\perp$ & $\FF_{\tau^\perp}(\R^m)$ &  1.7\\
$F\sqcap F'$ & 
$\Decompsp_\tau(\R^m)$ &  3.5 \\
$\theta_\sqcap(F, F')$ & 
$\R$ &   3.4
\end{tabular}
\end{center}
\end{table}

Finally, let us formulate the statement in the AP that will be used in this paper (see Proposition 2.37 in~\cite{LEbook} or Proposition 4.2 in~\cite{LEbook-chap2}).

\begin{proposition} \label{AP-practical}
There exists $c>0$ such that
 given $0<\epsilon<1$,  $0<\kappa\leq c\,\epsilon^ 2$ 
and  \,  $g_0, g_1,\ldots, g_{n-1}\in\gl(m,\R)$, \,
 if
\begin{align*}
\rm{(gaps)} \  & \rgap (g_i) >  \frac{1}{\ka} &  \text{for all }  & \ \  0 \le i \le n-1  
\\
\rm{(angles)} \  & \frac{\norm{ g_i \cdot g_{i-1} }}{\norm{g_i}  \, \norm{ g_{i-1}}}  >  \ep  & \
 \text{for all }   & \  \ 1 \le i \le n-1  
\end{align*}
then  
\begin{align*} 
& \max\left\{ \, d(\mostexp(g^{(n)\ast}), \mostexp(g_{n-1}^\ast)),\,
d(\mostexp(g^{(n)}), \mostexp(g_{0})) \, \right\}
 \lesssim    \kappa\,\ep^{-1}\,. \\
\end{align*}
 \end{proposition}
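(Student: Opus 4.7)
The plan is to prove the estimate by induction on the number of factors $n$, with the base case $n=2$ handled by a direct geometric analysis via the singular value decomposition. The guiding picture is that a matrix $g$ with $\rgap(g) > 1/\kappa$ behaves like a rank-one map up to relative error $\kappa$: every unit input is sent to a vector concentrated along $\mostexp(g^\ast)$ with transverse component of norm at most $\kappa\,\|g\|$. Consequently the composition $g_1 g_0$ is essentially determined by how $g_1$ acts on the single vector $\mostexp(g_0^\ast)$.

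For the base case, decompose a unit vector as $v = \alpha\,\mostexp(g_0) + w$ with $w \perp \mostexp(g_0)$ and $\alpha^2 + \|w\|^2 = 1$. Since $g_0 w$ lies in the orthogonal complement of $\mostexp(g_0^\ast)$ and satisfies $\|g_0 w\| \le s_2(g_0)\,\|w\| \le \kappa\,\|g_0\|\,\|w\|$, the triangle inequality gives
\[ \|g_1 g_0 v\| \le |\alpha|\,\|g_0\|\,\|g_1\,\mostexp(g_0^\ast)\| + \kappa\,\|g_0\|\,\|g_1\|\,\|w\|. \]
Cauchy--Schwarz over the unit sphere yields $\|g_1 g_0\|^2 \le \|g_0\|^2 \bigl(\|g_1\,\mostexp(g_0^\ast)\|^2 + \kappa^2 \|g_1\|^2\bigr)$, which combined with the angle hypothesis and $\kappa \le c\,\epsilon^2$ forces $\|g_1\,\mostexp(g_0^\ast)\| \gtrsim \epsilon\,\|g_1\|$. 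Writing the maximizer as $v_\ast = \cos\theta_\ast\,\mostexp(g_0) + \sin\theta_\ast\,\hat w_\ast$ and using $\|g_1 g_0\| \ge \|g_1 g_0\,\mostexp(g_0)\| = \|g_0\|\,\|g_1\,\mostexp(g_0^\ast)\|$ leads to $\theta_\ast \lesssim \kappa\,\|g_1\|/\|g_1\,\mostexp(g_0^\ast)\| \lesssim \kappa/\epsilon$, as required. The symmetric argument applied to $(g_1 g_0)^\ast = g_0^\ast\,g_1^\ast$ delivers the adjoint-side estimate. Two auxiliary outputs must be recorded for the induction: the near-multiplicativity $\|g_1 g_0\| \ge (1 - O(\kappa/\epsilon))\,\|g_0\|\,\|g_1\,\mostexp(g_0^\ast)\|$ and the improved gap $\rgap(g_1 g_0) \gtrsim \epsilon^2/\kappa^2$.

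For the inductive step, split $g^{(n)} = g_{n-1}\cdot g^{(n-1)}$ and apply the base case to this pair. The gap of the subproduct $g^{(n-1)}$ is furnished by a strengthened inductive hypothesis tracking the gap alongside the direction estimates, and the pair-angle condition $\|g_{n-1}\cdot g^{(n-1)}\| \gtrsim \epsilon\,\|g_{n-1}\|\,\|g^{(n-1)}\|$ follows by combining the inductively known $\mostexp(g^{(n-1)\ast}) \approx \mostexp(g_{n-2}^\ast)$ with the original angle hypothesis on $(g_{n-2}, g_{n-1})$ and the near-multiplicativity estimate. The base case then yields the adjoint-side bound directly, while the triangle inequality with the inductive hypothesis closes the direction-side bound.

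The main obstacle is obtaining an estimate uniform in $n$; a naive chaining of pair-errors would produce an extra factor of $n$. The resolution lies in the iterated gap estimate $\rgap(h_1 h_0) \gtrsim \epsilon^2\,\rgap(h_1)\,\rgap(h_0)$, which propagates to $\rgap(g^{(k)}) \gtrsim \epsilon^{2(k-1)}/\kappa^k$. Hence the effective $\kappa$-parameter at the $k$-th pair-application is $1/\rgap(g^{(k-1)}) \lesssim \kappa\,q^{k-2}$ with $q := \kappa/\epsilon^2$, and the successive pair-errors form a geometric series summing to a constant multiple of $\kappa/\epsilon$ independent of $n$. The hypothesis $\kappa \le c\,\epsilon^2$ is exactly what keeps $q$ bounded away from $1$, making the geometric summation converge and producing the claimed $n$-uniform estimate $\kappa\,\epsilon^{-1}$.
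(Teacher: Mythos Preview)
The paper does not include a proof of this proposition; it is quoted verbatim from the companion works \cite{LEbook} and \cite{LEbook-chap2} (see the sentence immediately preceding the statement), so there is no in-paper argument to compare against.

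Assessed on its own, your sketch is essentially correct and follows the classical Goldstein--Schlag route to the Avalanche Principle. The base-case SVD analysis is right, and the key gap-amplification estimate $\rgap(h_1 h_0) \gtrsim \epsilon^2\,\rgap(h_0)\,\rgap(h_1)$ you rely on is also correct, though it deserves a one-line justification you did not give: it follows from
\[
s_1(h_1h_0)\,s_2(h_1h_0) = \|\wedge^2(h_1 h_0)\| \le \|\wedge^2 h_1\|\,\|\wedge^2 h_0\| = s_1(h_1)s_2(h_1)\,s_1(h_0)s_2(h_0)
\]
together with the angle hypothesis $s_1(h_1h_0) \ge \epsilon\,s_1(h_0)s_1(h_1)$. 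With this in hand the geometric summation with ratio $q=\kappa/\epsilon^2\le c$ goes through and gives the $n$-uniform bound.

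One bookkeeping point to tighten: the induction is not literally ``assume the bound $C'\kappa/\epsilon$ at step $n-1$, conclude the same at step $n$,'' since the direction-side picks up an extra $O(\kappa q^{n-2}/\epsilon)$ each step. The clean way is to prove the adjoint-side bound $d(\mostexp(g^{(k)\ast}),\mostexp(g_{k-1}^\ast)) \le C_1\kappa/\epsilon$ first at every $k$ (it closes directly from the pair estimate and uses only the gap of $g_{k-1}$), use it to verify the pair-angle $\gtrsim\epsilon$ at each step, then bound the direction side by the telescoped sum $\sum_k d(\mostexp(g^{(k)}),\mostexp(g^{(k-1)}))$ using the gap growth of $g^{(k-1)}$. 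You have all the ingredients; just make the order of dependence explicit.
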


\subsection{The multiplicative ergodic theorem}
\label{met}

\newcommand{\kaAP}{ \kappa_{ap} }
\newcommand{\epAP}{ \epsilon_{ap} }
\newcommand{\gap}{{\rm gap}}

Throughout this section let $T:X\to X$ be an ergodic invertible measure preserving transformation on a  probability space  $(X,\mathcal{F},\mu)$. 

Consider a measurable bundle $\Bundle\subseteq X\times \R^m$  determined by some measurable function $E:X\to \Gr(\R^m)$, and  $\mu$-integrable linear cocycle $F_A:\Bundle \to \Bundle$,
defined by a measurable family of linear maps
$A(x):E(x)\to E(T x)$.

Given a vector $v\in E(x)$ 
we define
\begin{align*}
 \lambda_A(x,v):= \limsup_{n\to+\infty}\frac{1}{n}\,\log \norm{\An{n}(x) v} \;,\\
  \lambda_A^-(x,v):= \liminf_{n\to+\infty}\frac{1}{n}\,\log \norm{\An{n}(x) v} \; .
\end{align*}

Notice that $\lambda_A(x,0)=-\infty$.
This function satisfies the following properties:

\begin{proposition}\label{lambda LE properties}
For every $x\in X$, given vectors $v,v'\in E(x)$,
\begin{enumerate}
\item[(a)] \; $\displaystyle \lambda_A(x,v)\leq L_1(A)$,
\item[(b)] \; $\displaystyle \lambda_A(x,c \,v)= \lambda_A(x,v)$\, if $c\neq 0$,
\item[(c)] \; $\displaystyle\lambda_A(x,v+v') \leq  \max\{\lambda_A(x,v),\lambda_A(x,v') \}$,
\item[(d)]\;  
if \, $\lambda_A(x,v')< \lambda_A^-(x,v)=\lambda_A(x,v)$ then 
$$\lambda_A^-(x,v + v') = \lambda_A(x,v + v') = \lambda_A(x,v)\,.$$
\item[(e)] \; $\displaystyle \lambda_A(x,v) = \lambda_A(T x, A(x)\,v)$.
\end{enumerate}
\end{proposition}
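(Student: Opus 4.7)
The plan is to handle (a), (b), (e) as direct consequences of the definition of $\lambda_A$ together with Proposition~\ref{prop  L1 = lim log An}, to derive (c) from the triangle inequality and the identity $\limsup_n \max\{a_n,b_n\}=\max\{\limsup_n a_n,\limsup_n b_n\}$, and to reserve the substantive argument for (d).

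For (a), start from $\norm{\An{n}(x)\,v}\leq \norm{\An{n}(x)}\,\norm{v}$, take $\frac{1}{n}\log$ and pass to $\limsup$; the factor $\norm{v}$ disappears and the first term tends to $L_1(A)$ by Proposition~\ref{prop  L1 = lim log An}. Item (b) is immediate since $\frac{1}{n}\log\abs{c}\to 0$ when $c\neq 0$. For (e), use the cocycle identity $\An{n}(T x)\,A(x)=\An{n+1}(x)$ to write
\[
\frac{1}{n}\,\log \norm{\An{n}(T x)\,A(x)\,v}=\frac{n+1}{n}\cdot \frac{1}{n+1}\,\log \norm{\An{n+1}(x)\,v}\,,
\]
and take $\limsup_{n\to\infty}$. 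For (c), start from $\norm{\An{n}(x)(v+v')}\leq 2\max\{\norm{\An{n}(x)\,v},\,\norm{\An{n}(x)\,v'}\}$; dividing by $n$ kills the constant, and the identity just mentioned delivers the claim.

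The substantive statement is (d), and I expect this to be the main (mild) obstacle. The hypothesis $\lambda_A^-(x,v)=\lambda_A(x,v)=:\lambda$ means that $\frac{1}{n}\log \norm{\An{n}(x)\,v}$ converges to $\lambda$. Because $\lambda_A(x,v')<\lambda$, the value $\lambda$ is necessarily finite, and in particular $\An{n}(x)\,v\neq 0$ for all sufficiently large $n$. Pick numbers with $\lambda_A(x,v')<\lambda^{\ast}<\lambda^{\ast\ast}<\lambda$, so that for $n$ large enough
\[
\norm{\An{n}(x)\,v'}< e^{n\lambda^{\ast}},\qquad \norm{\An{n}(x)\,v}> e^{n\lambda^{\ast\ast}}\,,
\]
whence $r_n:=\norm{\An{n}(x)\,v'}/\norm{\An{n}(x)\,v}\to 0$ exponentially. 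The two-sided triangle inequality
\[
(1-r_n)\,\norm{\An{n}(x)\,v}\leq \norm{\An{n}(x)(v+v')}\leq (1+r_n)\,\norm{\An{n}(x)\,v}
\]
then yields $\frac{1}{n}\log \norm{\An{n}(x)(v+v')}=\frac{1}{n}\log \norm{\An{n}(x)\,v}+o(1)\to \lambda$, so that both $\lambda_A^-(x,v+v')$ and $\lambda_A(x,v+v')$ equal $\lambda$, as required. The only point of caution is the edge case $\lambda_A(x,v')=-\infty$, which is handled by choosing $\lambda^{\ast}$ to be any real number below $\lambda^{\ast\ast}$.
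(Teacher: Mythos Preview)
Your proof is correct and follows essentially the same approach as the paper's: items (a)--(c) and (e) via the obvious inequalities and the cocycle identity, and item (d) via the two-sided triangle inequality with the ratio $r_n=\norm{\An{n}(x)v'}/\norm{\An{n}(x)v}\to 0$ geometrically. Your write-up is in fact slightly more careful than the paper's (e.g., you explicitly address the edge case $\lambda_A(x,v')=-\infty$).
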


\begin{proof}
Item (a) follows from the inequality $\norm{\An{n}(x)\,v}\leq \norm{\An{n}(x)}\,\norm{v}$.
Item (b) is a straightforward consequence of the definition.
Item (c) follows from the inequality 
\begin{align*}
\log \norm{\An{n}(x)(v+v')} &\leq 
\log \left( \norm{\An{n}(x) v} + 
 \norm{\An{n}(x)v' } \right) \\
&\leq \log \left( 2\,\max\{ \norm{\An{n}(x) v}, \norm{\An{n}(x)v' }\} \right)\\
& = \log 2 + \max\{ \log \norm{\An{n}(x) v},
\log \norm{\An{n}(x)v' }\}\;.
\end{align*}   
Item (d) follows from the inequality 
\begin{align*}
\norm{\An{n}(x) v} \left( 1-\frac{\norm{\An{n}(x) v'}}{\norm{\An{n}(x) v}}\right) \leq 
\norm{\An{n}(x) (v+v') } \\
\qquad \leq 
\norm{\An{n}(x) v} \left( 1+\frac{\norm{\An{n}(x) v'}}{\norm{\An{n}(x) v}}\right)
\end{align*}
and the fact that 
$$\limsup_{n\to+\infty}\frac{1}{n}	\log\norm{\An{n}(x) v'} < \lim_{n\to+\infty}\frac{1}{n}	\,\log\norm{\An{n}(x) v}$$ 
implies the ratio $\norm{\An{n}(x) v'}/\norm{\An{n}(x) v}$ converges geometrically to $0$.
Finally, item (e) follows from the identity
$ \An{n}(x) v  =  \An{n-1}(T x) (A(x) v)$.

\end{proof}

Given a real number $\lambda\in\R$, the set
$$ F_\lambda(x):=\{\, v\in E(x)\,\colon\,
\lambda_A(x,v)\leq \lambda\,\} \;, $$
is a linear subspace of $E(x)$, because of items (b) and (c) of the previous proposition.
This family of subspaces determines a finite filtration (flag)
$$ \{0\}\subsetneq F_{\lambda_1}(x)  
\subsetneq F_{\lambda_2}(x)    \ldots \subsetneq F_{\lambda_k}(x) \subsetneq  F_{\lambda_{k+1}}(x)=E(x)    $$
which by item (e) is invariant in the sense that
$A(x)\,F_\lambda(x)\subseteq F_\lambda(T x)$, for all $x\in X$.
The multiplicative ergodic theorem (MET) gives a precise description of this filtration and its relation with the Lyapunov exponents.

Assume that $A$ is $\mu$-integrable
and  $ L_1(A)>L_2(A)$.
The following proposition shows the existence 
of a measurable function $\mostexp^{\infty}(A):X\to\Proj$ with the {\em most expanding direction} of the cocycle $A$.

For each $n\in\N$ we define a partial function
$$ \mostexp^{(n)}(A)(x):=
\left\{ \begin{array}{lll}
\mostexp(A^{(n)}(x)) & & \text{ if }  \rgap(A^{(n)}(x))>1 \\
\text{undefined} &  &  \text{otherwise}. 
\end{array} \right. $$

\medskip

\begin{definition}\label{Cauchy}
Let $(Y,d)$ be a metric space.
We say that a sequence of partial functions $f_n:D_n\subseteq X\to Y$ is {\em $\mu$ almost everywhere Cauchy} if given $\epsilon>0$ there exists a set $B\in\mathscr{A}$ with $\mu(B)<\epsilon$ and $n_0\in\N$ such that for all $n\geq n_0$,
the function $f_n(x)$ is well-defined on $X\setminus B$, i.e., $ X\setminus B\subseteq D_n$, and the sequence
$\{ f_n(x)\}_{n\geq n_0}$ is Cauchy for every $x\notin B$. 
\end{definition}

\medskip

\begin{proposition}\label{med:aec}
Let $A$ be a $\mu$-integrable cocycle such that
 $L_1(A)>L_2(A)$. The sequence of (partial) functions $ \mostexp^{(n)}(A)$ from $X$ to $\Proj$  is $\mu$ almost everywhere Cauchy. In particular, it  converges $\mu$ almost everywhere to a (total) measurable function $\mostexp^{(\infty)}(A):X\to\Proj$.
Moreover,  for $\mu$-a.e. $x\in X$,
$$\limsup_{n\to +\infty}\frac{1}{n} \,\log \, d(\mostexp^{(n)}(A)(x), \mostexp^{(\infty)}(A)(x)) \leq L_2(A)-L_1(A) <0 \;.$$
\end{proposition}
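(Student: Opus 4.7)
The plan is to use Kingman's ergodic theorem to verify, $\mu$-a.e., the hypotheses of the Avalanche Principle (Proposition~\ref{AP-practical}) at the iterates of the cocycle, and then to exploit the AP's estimate on the most expanding direction of a product to obtain the Cauchy property of $\mostexp^{(n)}(A)(x)$ with the announced exponential rate. Set $\delta := L_1(A) - L_2(A) > 0$. By Proposition~\ref{prop: LE and SVs} applied to $A$ and to $\wedge_2 A$, for $\mu$-a.e. $x$ one has $\frac{1}{n}\log\|A^{(n)}(x)\| \to L_1(A)$ and $\frac{1}{n}\log s_2(A^{(n)}(x)) \to L_2(A)$, hence $\frac{1}{n}\log\rgap(A^{(n)}(x)) \to \delta$. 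Fixing $\eta \in (0,\delta/10)$, off a null set there is a threshold $n_0(x)$ such that for all $n \geq n_0(x)$,
\[ \rgap(A^{(n)}(x)) \geq e^{n(\delta-\eta)},\qquad e^{n(L_1-\eta)} \leq \|A^{(n)}(x)\| \leq e^{n(L_1+\eta)}; \]
in particular $\mostexp^{(n)}(A)(x)$ is defined for $n \geq n_0(x)$.

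Write $v_n := \mostexp^{(n)}(A)(x)$. The key comparison is $d(v_n, v_{2n})$: decomposing $A^{(2n)}(x) = A^{(n)}(T^n x) \cdot A^{(n)}(x)$ and invoking Proposition~\ref{AP-practical} on the two factors $g_0 := A^{(n)}(x)$ and $g_1 := A^{(n)}(T^n x)$, one obtains, when the gap estimates hold at both $x$ and $T^n x$ for scale $n$, that $\kappa \leq e^{-n(\delta-\eta)}$ and
\[ \epsilon \geq \frac{\|A^{(2n)}(x)\|}{\|A^{(n)}(x)\|\,\|A^{(n)}(T^n x)\|} \geq e^{-4n\eta}, \]
so the AP hypothesis $\kappa \leq c\epsilon^2$ is satisfied for $\eta<\delta/10$ and $n$ large, yielding $d(v_{2n},v_n) \lesssim \kappa/\epsilon \leq e^{-n(\delta-5\eta)}$. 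Telescoping along a dyadic sequence $n_j = 2^j N$ makes these differences geometrically summable, so $\{v_{n_j}\}$ is Cauchy and converges to a limit $v_\infty$ with $d(v_{n_j},v_\infty) \lesssim e^{-n_j(\delta-5\eta)}$. A further AP comparison interpolating $v_n$ between $v_{n_j}$ and $v_{n_{j+1}}$ extends the bound to arbitrary large $n$, and letting $\eta \downarrow 0$ along a countable sequence produces $\limsup_n \frac{1}{n}\log d(v_n,v_\infty) \leq -\delta = L_2(A) - L_1(A)$.

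The main obstacle is that the threshold $n_0(T^n x)$ at shifted orbit points is not controlled by $n_0(x)$, so the gap hypothesis on $g_1 = A^{(n)}(T^n x)$ is not automatic along the orbit. I plan to address this through the flexibility of Definition~\ref{Cauchy}, which only requires the Cauchy property off a set of arbitrarily small $\mu$-measure: for any $\epsilon > 0$, Egorov's theorem provides $N_\epsilon \in \N$ and $X_\epsilon \subset X$ with $\mu(X_\epsilon) \geq 1-\epsilon$ on which $n_0 \leq N_\epsilon$ uniformly, and a Katznelson--Weiss type stopping-time argument then selects, off a further set of small measure, a sequence of scales along which the relevant orbit iterates lie in $X_\epsilon$, so that the multi-scale AP chain above closes up uniformly.
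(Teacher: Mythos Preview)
Your approach is essentially the paper's: both verify the AP hypotheses via Kingman, pass by Egorov to a good set $\Omega$ on which the gap and norm estimates hold uniformly from some scale $n_0$, and then use a Birkhoff/stopping-time recurrence argument to select approximately-doubling times at which the orbit revisits $\Omega$, so that the AP can be telescoped. The paper packages exactly this mechanism as Lemma~\ref{lemma recurrent doubling sequence} (Birkhoff on the indicator of $\Omega$ produces $\epsilon$-doubling sequences $\{m_i\}$ with $T^{m_i}x\in\Omega$) and Proposition~\ref{prop recurrent doubling sequence and AP estimates}.

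One point worth sharpening: your interpolation ``$v_n$ between $v_{n_j}$ and $v_{n_{j+1}}$'' along strict dyadic scales is awkward, since the remainder length $n-n_j$ can fall below the uniform threshold $N_\epsilon$, and then the AP gap hypothesis on $A^{(n-n_j)}(T^{n_j}x)$ fails. The paper sidesteps this by constructing, for each target $n$, an $\epsilon$-doubling sequence $m_0=n_0,\,m_1,\ldots,\,m_k=n$ terminating \emph{exactly} at $n$ (Lemma~\ref{doubling seq} combined with part~(b) of Lemma~\ref{lemma recurrent doubling sequence}); all consecutive gaps $m_{i+1}-m_i$ then remain comparable to $m_i\geq n_0$, so the AP chain reaches $n$ directly and no separate interpolation is needed.
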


This proposition will be proved using the Avalanche Principle.

\begin{lemma}\label{doubling seq}
Given $\epsilon>0$ there exists  $r\in\N$ such that for any $n,n_0\in\N$ with $n\geq  r\,n_0$
there is a sequence of integers $\{m_i\}_{i\geq 0}$ for which
\begin{enumerate}
\item[(a)] $m_0=n_0$, 
\item[(b)] $m_k=n$ \, for some $k\geq 1$, and
\item[(c)] 
$\abs{m_i- 2\,m_{i-1}}<\epsilon\,m_{i}$ for all $i\geq 1$.
\end{enumerate}

\end{lemma}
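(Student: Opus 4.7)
The plan is to reformulate the constraint multiplicatively, solve the underlying real problem first, and then pass to integers via a two-phase construction. The bound $|m_i - 2 m_{i-1}| < \epsilon\,m_i$ is equivalent to the ratio condition $m_i/m_{i-1} \in (2/(1+\epsilon),\,2/(1-\epsilon))$, so in log space I am being asked to partition $\log(n/n_0)$ into $k$ summands each lying in the open interval $I_\epsilon := (\log\tfrac{2}{1+\epsilon},\,\log\tfrac{2}{1-\epsilon})$, which is a short interval around $\log 2$.

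First I would handle the purely real version of the problem. Denoting the endpoints of $I_\epsilon$ by $a_\epsilon < b_\epsilon$, the $k$-fold sumset $k \cdot I_\epsilon$ is the open interval $(k\,a_\epsilon,\,k\,b_\epsilon)$, and a short calculation shows that consecutive sumsets overlap once $k > a_\epsilon/(b_\epsilon - a_\epsilon)$. Hence $\bigcup_{k \geq 1} k \cdot I_\epsilon$ contains a ray $[L_\epsilon, \infty)$ with $L_\epsilon$ depending only on $\epsilon$. Provided $\log(n/n_0) \geq L_\epsilon$, I can select an integer $k$ and set $\rho := (n/n_0)^{1/k}$; working with a slightly smaller admissible subinterval of $I_\epsilon$ (absorbed by enlarging $L_\epsilon$) I may assume $|\rho - 2| < \epsilon/2$. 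The real sequence $\tilde m_i := n_0\,\rho^i$ then has $\tilde m_0 = n_0$, $\tilde m_k = n$, and $|\tilde m_i - 2\,\tilde m_{i-1}| = \tilde m_{i-1}\,|\rho - 2| < (\epsilon/2)\,\tilde m_i$.

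The second step is to pass to integers. Setting $m_i := \lfloor \tilde m_i + \tfrac{1}{2}\rfloor$ for $0 < i < k$ (with $m_0 = n_0$ and $m_k = n$ taken exactly), the rounding inflates $|m_i - 2\,m_{i-1}|$ by at most $3/2$, and this is absorbed into $\epsilon\,m_i$ provided $\tilde m_i \gtrsim 1/\epsilon$. This is the main obstacle, and it fails when $n_0$ is small, since then the first few $\tilde m_i$ are also small. I resolve it by preceding the rounded interpolation with an \emph{exact doubling} phase: set $m_i := 2^i\,n_0$ for $0 \leq i \leq J$ with $J = J(\epsilon)$ large enough that $2^J\,n_0 \gtrsim 1/\epsilon$ for all $n_0 \geq 1$; these steps trivially satisfy the constraint. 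Running the rounded interpolation from $m_J$ to $n$ then demands $n/m_J \geq e^{L_\epsilon}$, which is guaranteed by the hypothesis $n \geq r\,n_0$ with the explicit choice $r := 2^J \cdot e^{L_\epsilon}$, a function of $\epsilon$ alone. The transition at step $J+1$ is automatic: the first interpolation ratio equals $\rho$, which is within $\epsilon/2$ of $2$ and is therefore compatible with the exact doublings that came before.
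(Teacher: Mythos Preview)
Your argument is correct and shares the same core idea as the paper: interpolate between $n_0$ and $n$ by a geometric sequence with ratio close to $2$, then round to integers. The paper does this in one stroke by setting $k=\lfloor\log_2(n/n_0)\rfloor$, $\theta=\tfrac{1}{k}\log_2(n/n_0)-1\in[0,1/k)$, and $m_i=\lfloor n_0\,2^{(1+\theta)i}\rfloor$, obtaining the bound $\tfrac{|m_i-2m_{i-1}|}{m_i}<\tfrac{\log 2}{k}+\tfrac{2}{m_i}$ and then choosing $r=2^l$ with $\tfrac{\log 2}{l}+\tfrac{1}{2^{l-1}}<\epsilon$.

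The substantive difference is your two-phase construction. The paper's final step tacitly uses $m_i\geq 2^l$ to control the $\tfrac{2}{m_i}$ rounding term, which is not guaranteed when $n_0$ is very small (e.g.\ $n_0=1$ gives $m_1=2$ regardless of $l$). Your preliminary exact-doubling phase $m_i=2^i n_0$ for $i\leq J(\epsilon)$ is precisely what is needed to push the starting point of the rounded interpolation above the $1/\epsilon$ threshold, making the argument work uniformly in $n_0\geq 1$. So your route is slightly longer but more careful on this point; otherwise the two proofs are the same argument.
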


\begin{proof}
Choose $k\geq 1$ such that $2^k \leq n/n_0 < 2^{k+1}$,
and define $\theta = \frac{1}{k}\,\log_2(n/n_0)-1$, so that
$0\leq \theta <\frac{1}{k}$. The sequence $m_i:=\lfloor n_0\,2^{(1+\theta)i}\rfloor$ satisfies (a) and (b).
From
$$ n_0\,2^{(1+\theta)i}-1 < m_i \leq n_0\,2^{(1+\theta)i}
\quad \text{ for all }\; i\geq 0\;,$$
we obtain 
$$   \frac{\abs{m_i-2\, m_{i-1}}}{m_i} <\frac{\log 2}{k} +\frac{2}{m_i}\;.$$ 
Item (c) follows choosing $r=2^l$ where $l\in\N$ is such that
$\frac{\log 2}{l}+\frac{1}{2^{l-1}}<\epsilon$.

\end{proof}

\begin{definition}
Given $\epsilon>0$, we call an $\epsilon$-doubling sequence any sequence $\{m_i\}_{i\geq 0}$
 of integers such that 
$\abs{m_i- 2\,m_{i-1}}<\epsilon\,m_{i}$ for all $i\geq 1$.
\end{definition}

\begin{lemma} \label{lemma recurrent doubling sequence}
Given $\epsilon>0$ small enough and a measurable set $\Omega\subset X$ such that $\mu(\Omega)> 1-\epsilon/4$ there is a measurable subset $\Omega_0\subseteq \Omega$ 
with $\mu(\Omega_0)> 1-\epsilon$, and $r, n_0\in\N$ such that
\begin{enumerate}
\item[(a)] \; For each $x\in\Omega_0$  there is a $\epsilon$-doubling sequence $\{m_i\}_{i\geq 0}$ satisfying  $m_0=n_0$ and  $T^{m_i} x\in\Omega$ for all $i\geq 0$;
\item[(b)] \; For all $x\in\Omega_0$  and $n\geq r\, n_0$, there is an $\epsilon$-doubling sequence $\{m_i\}_{i\geq 0}$ satisfying  $m_0=n_0$, $m_k=n$ for some $k\geq 1$,  and  $T^{m_i} x\in\Omega$ for all $0\leq i <k$.
\end{enumerate}
\end{lemma}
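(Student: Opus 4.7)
The idea is to combine Birkhoff's ergodic theorem, applied to $\ind_{\Omega^c}$, with the deterministic scaffolding produced by Lemma~\ref{doubling seq}. Write $\chi:=\ind_{\Omega^c}$, so that $\int\chi\,d\mu=1-\mu(\Omega)<\epsilon/4$. Birkhoff gives $\tfrac{1}{n}\sum_{j=0}^{n-1}\chi(T^j x)\to\int\chi\,d\mu$ for $\mu$-a.e.\ $x$. I would choose $n_0\in\N$ large enough that
\[ X_{n_0}:=\Bigl\{\,x\in X:\sup_{n\ge n_0}\tfrac{1}{n}\textstyle\sum_{j=0}^{n-1}\chi(T^j x)<\epsilon/3\,\Bigr\} \]
satisfies $\mu(X_{n_0})>1-\epsilon/2$, and set $\Omega_0:=\Omega\cap X_{n_0}\cap T^{-n_0}\Omega$. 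By $T$-invariance of $\mu$, $\mu(\Omega_0)>1-\epsilon/4-\epsilon/2-\epsilon/4=1-\epsilon$.

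For part~(a), I would fix $x\in\Omega_0$ and build $\{m_i\}_{i\ge0}$ inductively. Set $m_0:=n_0$, whence $T^{m_0}x\in\Omega$ because $x\in T^{-n_0}\Omega$. Given $m_{i-1}\ge n_0$ with $T^{m_{i-1}}x\in\Omega$, consider the integer window
\[ W_i:=\{\,j\in\N:|j-2m_{i-1}|\le(\epsilon/2)\,m_{i-1}\,\}, \]
of cardinality roughly $\epsilon\,m_{i-1}$. The uniform bound on $X_{n_0}$ shows that among the integers in $[0,\lceil(2+\epsilon/2)m_{i-1}\rceil]$ at most $(2+\epsilon/2)\,m_{i-1}\cdot\epsilon/3$ fall outside $\Omega$, which for $\epsilon$ small is strictly less than $|W_i|$. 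Hence some $j\in W_i$ has $T^j x\in\Omega$; take $m_i$ to be any such $j$. Then $m_i\ge(2-\epsilon/2)m_{i-1}>m_{i-1}\ge n_0$, the induction continues, and $|m_i-2m_{i-1}|\le(\epsilon/2)m_{i-1}<\epsilon\,m_i$, so the sequence is $\epsilon$-doubling.

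For part~(b), I would apply Lemma~\ref{doubling seq} with parameter $\epsilon/3$ in place of $\epsilon$ to obtain an integer $r$ and, for every $n\ge r\,n_0$, a deterministic $(\epsilon/3)$-doubling skeleton $m_i^*:=\lfloor n_0\,2^{(1+\theta)i}\rfloor$, $0\le i\le k$, with $m_0^*=n_0$ and $m_k^*=n$. Mirroring part~(a), for $1\le i\le k-1$ I perturb $m_i^*$ within a window of radius $(\epsilon/6)\,m_i^*$ to select $m_i$ satisfying $T^{m_i}x\in\Omega$ (the same density estimate on $X_{n_0}$ guarantees a valid choice, since the window still has cardinality of order $\epsilon\,m_i^*$ exceeding the count of bad times), and I set $m_k:=n$. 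The radii are tuned so that each ratio $m_i/m_{i-1}$ and also the terminal ratio $m_k/m_{k-1}$ stays within $2\pm\epsilon$: the $\epsilon/3$ slack of the skeleton plus the $\epsilon/6$ slack of the perturbation combine to at most $\epsilon$.

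\textbf{Main obstacle.} The delicate point is the last step of~(b): after freely perturbing the intermediate $m_i$ to land in $\Omega$-times, the value $m_k=n$ is prescribed, and one must verify that the doubling constraint $|m_k-2m_{k-1}|<\epsilon\,m_k$ still holds. This is why I invoke Lemma~\ref{doubling seq} with the stricter parameter $\epsilon/3$ and bound the perturbations by $\epsilon/6$; the bookkeeping is routine but needs to be done carefully to guarantee that no accumulated drift of the $m_i$'s from $m_i^*$ spills the final ratio outside of the $\epsilon$-doubling interval.
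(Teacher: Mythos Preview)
Your approach is essentially the same as the paper's: use Birkhoff's theorem on $\ind_{\Omega^c}$ to get a uniform density bound on ``bad'' return times for a large set of phases, intersect with $T^{-n_0}\Omega$, and then for each item perturb a geometric (resp.\ Lemma~\ref{doubling seq}) skeleton inside small windows to hit good return times. The only notable variant is that for part~(a) you build the sequence inductively, while the paper perturbs the fixed sequence $a_i=2^i n_0$; both work equally well.

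One arithmetic slip to flag in part~(b): with your density bound $\epsilon/3$ and perturbation window of radius $(\epsilon/6)m_i^*$, the window has cardinality about $(\epsilon/3)m_i^*$, but the bound on bad times in $[0,(1+\epsilon/6)m_i^*]$ is $(\epsilon/3)(1+\epsilon/6)m_i^*$, which is \emph{larger} than the window, so you cannot conclude a good time exists. The paper avoids this by using the sharper density bound $\epsilon/4$ (which is available since $\mu(\Omega^c)<\epsilon/4$ and Birkhoff gives convergence to a value strictly below $\epsilon/4$); you can fix your argument the same way by replacing $\epsilon/3$ with $\epsilon/4$ in the definition of $X_{n_0}$, and then your window/skeleton bookkeeping goes through with room to spare.
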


\begin{proof}
By Birkhoff's ergodic theorem, for
 $\mu$-a.e. $x\in X$,
\begin{equation}\label{limit return frequency}
\lim_{m\to \infty}\, \frac{1}{m}\,
\#\{\, 0\leq j\leq m-1\,\colon\,
T^j (x)\notin \Omega 
\,\} = \mu(X\setminus \Omega)  <\frac{\epsilon}{4} \;.  
\end{equation} 

Given a phase $x$, if we denote by $m (x)$ the first integer such that the inequality
$$\frac{1}{m}\,
\#\{\, 0\leq j\leq m-1\,\colon\,
T^j (x)\notin \Omega 
\,\}  < \frac{\epsilon}{4} $$
holds for all $m \ge m (x)$, then by \eqref{limit return frequency}, $m (x)$ is defined for $\mu$-a.e. $x  \in X$.

For every integer $n$, let $\U_n := \{ x \in \Omega \colon m (x) \le n \}$. Since $\U_n \subset \U_{n+1}$ and $\cup_n \, \U_n$ has full (relative) measure in $\Omega$, there is $n_0 = n_0 (\epsilon)$ such that $\mu (\Omega \setminus \U_{n_0}) < \epsilon/2$. 

Note that if $x \in \U_{n_0}$, then
\begin{equation}\label{return frequency}
\# \{\, 0\leq j\leq m-1\,\colon\,
T^j (x)\notin \Omega 
\,\} < \frac{\epsilon \,m}{4}\quad \text{ for all } \; m\geq n_0 \;.
\end{equation}

We set $\Omega_0 := \U_{n_0} \cap T^{-n_0}(\Omega)$. Then 
$$\mu(X\setminus \Omega_0)\leq \mu(X\setminus \Omega) + \mu(\Omega \setminus \U_{n_0}) +  \mu(X\setminus T^{-n_0}(\Omega)) < \epsilon \;,$$ 
and if $x\in\Omega_0$ then \eqref{return frequency} holds and  $T^{n_0} x\in \Omega$.

\smallskip

To prove (a), take $x\in\Omega_0$ and consider the sequence $a_i :=2^i\, n_0$.

For each $i\geq 1$, applying~\eqref{return frequency} with $m=a_i$,   there is an integer $m_i$ in the range  $(1-\epsilon/4) a_i\leq m_i \leq a_i$ such that $T^{m_i} x\in \Omega$.
A straightforward computation shows that $\{m_i\}_{i\geq 0}$ is an $\epsilon'$-doubling sequence with $\epsilon'=\frac{\epsilon/4}{1-\epsilon/4}<\epsilon$.

\smallskip

Finally, to prove (b), we fix $x \in \Omega_0$ and use Lemma~\ref{doubling seq} to get an integer $r=r(\epsilon)$ so that 
 if $n\geq r \, n_0$, there is an $\frac{\epsilon}{4}$-doubling sequence $\{m_i\}_{i\geq 0}$ with  $m_0 = n_0$ and $m_k = n$ for some index $k \ge 1$.
By the frequency bound~\eqref{return frequency} applied with $m = m_i + \frac{\epsilon \, m_i}{6}$, for each $1\leq i \leq k-1$ there is $m_i'\in\N$ such that  $T^{m_i'} x\in \Omega$ and $\abs{m_i-m_i'}<\epsilon\, m_i/6$.
Setting $m_0'=m_0=n_0$ and $m_k'=m_k=n$,
 the sequence $\{m_i'\}_{i\geq 0}$ satisfies (b), and a simple calculations shows that it is $\epsilon$-doubling. 

\end{proof}

The next proposition says that for any given $\epsilon>0$ there is a measurable set of phases $\Omega_0$ with  
$\mu(\Omega_0)>1-\epsilon$,   such that if $x\in\Omega_0$
then there exists an $\epsilon$-doubling sequence of  {\em avalanche times}, that is, times where the assumptions of the AP hold.

\begin{proposition} \label{prop recurrent doubling sequence and AP estimates}
Let $A$ be a $\mu$-integrable cocycle with
 $L_1(A)>L_2(A)$. 
 
 Given $0<\varkappa <L_1(A)-L_2(A)$, and $0<\epsilon\ll \varkappa$, there exist $r, n_0\in\N$ and a measurable set $\Omega_0\subset X$ with  $\mu(\Omega_0)> 1-\epsilon$ and  such that for any $x\in\Omega_0$ and $n\geq r\,n_0$, there is a $\epsilon$-doubling sequence $\{m_i\}_{i\geq 0}$  with $m_0=n_0$, $m_k=n$ for some $k\geq 1$, and  such that for all $i\geq 0$,
	\begin{enumerate}
	\item[(1)] 
	$\rgap( \An{m_{i}}(x) )  \geq e^{ m_{i}(\varkappa-2\epsilon)}$ \ and \\ \\
	$\rgap( \An{m_{i+1} - m_{i}}(T^{m_{i}}x) )  \geq e^{  m_{i} (\varkappa-2\epsilon) (1-\ep) / (1+\ep)}$.\\
	
	\item[(2)] 
	$\rift (\An{m_{i}}(x),\,
	\An{m_{i+1} - m_{i}}(T^{m_{i}}x) )  \geq e^{- 5 m_{i}  \epsilon }$.
	\end{enumerate}
\end{proposition}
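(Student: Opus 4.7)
The plan is to combine a uniform (Egorov) version of the convergences provided by Kingman's theorem with the recurrence statement of Lemma~\ref{lemma recurrent doubling sequence}. Both estimates asked for in the proposition are consequences of the $\mu$-a.e. asymptotics
\begin{equation*}
\frac{1}{n}\log\norm{\An{n}(x)} \to L_1(A), \qquad \frac{1}{n}\log\rgap(\An{n}(x)) \to L_1(A)-L_2(A)>\varkappa,
\end{equation*}
coming from Proposition~\ref{prop  L1 = lim log An} and Proposition~\ref{prop: LE and SVs} (applied to $s_1$ and $s_2$), so it suffices to realise these asymptotics uniformly at every relevant time $m_i$ and $m_{i+1}-m_i$ along a well-chosen orbit segment.

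First I would apply Egorov's theorem to these two convergences to produce a measurable set $\Omega \subseteq X$ with $\mu(\Omega)>1-\epsilon/4$ and an integer $N=N(\epsilon)$ such that, for every $y\in\Omega$ and every $n\geq N$,
\begin{equation*}
e^{n(L_1(A)-\epsilon)}\leq\norm{\An{n}(y)}\leq e^{n(L_1(A)+\epsilon)}, \qquad \rgap(\An{n}(y))\geq e^{n(\varkappa-\epsilon)}.
\end{equation*}
Next I feed this $\Omega$ into Lemma~\ref{lemma recurrent doubling sequence} to obtain $r,n_0\in\N$ and a set $\Omega_0\subseteq\Omega$ with $\mu(\Omega_0)>1-\epsilon$ such that, for every $x\in\Omega_0$ and every $n\geq r\,n_0$, there is an $\epsilon$-doubling sequence $\{m_i\}$ with $m_0=n_0$, $m_k=n$ and $T^{m_i}x\in\Omega$ for $0\leq i<k$. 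Enlarging $n_0$ if necessary I assume $n_0(1-\epsilon)/(1+\epsilon)\geq N$; a direct calculation from the doubling condition yields $m_i(1-\epsilon)/(1+\epsilon) \leq m_{i+1}-m_i \leq m_i(1+\epsilon)/(1-\epsilon)$, so both $m_i$ and $m_{i+1}-m_i$ exceed $N$ at every step and the uniform bounds above are applicable.

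With these ingredients in place, the two conclusions follow by direct substitution. For (1), the first estimate applies the uniform gap inequality to $y=x\in\Omega$ with time $m_i\geq N$, yielding $\rgap(\An{m_i}(x))\geq e^{m_i(\varkappa-\epsilon)}\geq e^{m_i(\varkappa-2\epsilon)}$; the second applies it to $y=T^{m_i}x\in\Omega$ with time $m_{i+1}-m_i$ and uses the lower bound $m_{i+1}-m_i\geq m_i(1-\epsilon)/(1+\epsilon)$ to convert $e^{(m_{i+1}-m_i)(\varkappa-\epsilon)}$ into $e^{m_i(\varkappa-2\epsilon)(1-\epsilon)/(1+\epsilon)}$. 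For (2), write $\An{m_{i+1}}(x)=\An{m_{i+1}-m_i}(T^{m_i}x)\,\An{m_i}(x)$ and apply the three uniform norm bounds (lower at $x$ with time $m_{i+1}$, upper at $x$ with time $m_i$, upper at $T^{m_i}x$ with time $m_{i+1}-m_i$); the $L_1(A)$-contributions cancel and one obtains $\rift \geq e^{-2\epsilon m_{i+1}} \geq e^{-5\epsilon m_i}$, the last inequality being valid for $\epsilon\leq 1/5$ since then $m_{i+1}<2m_i/(1-\epsilon)\leq \tfrac{5}{2}m_i$. The only real obstacle is bookkeeping: arranging the $\epsilon$-dependent quantities ($N$, $\Omega$, $n_0$, $\Omega_0$ and the distortion factors $(1\pm\epsilon)/(1\mp\epsilon)$) so that every scale and phase appearing in the verification lies simultaneously in the regime of uniform Kingman convergence, which is precisely what the hierarchy $0<\epsilon\ll\varkappa<L_1(A)-L_2(A)$ is designed to accommodate.
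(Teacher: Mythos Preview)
Your proof is correct and follows essentially the same route as the paper's: obtain a uniform-Kingman set $\Omega$ (the paper does this via explicitly defined tail sets $\Omega_{n_0}(\epsilon)$ rather than invoking Egorov, and controls $\rgap$ through $\norm{\wedge_2 \An{n}}$ rather than directly, but these are cosmetic differences), feed it into Lemma~\ref{lemma recurrent doubling sequence}, and then verify (1) and (2) by direct substitution of the uniform bounds at the times $m_i$ and $m_{i+1}-m_i$.
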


\begin{proof}
The following limits exist for $\mu$-a.e. $x\in X$,
\begin{align*}
&  \lim_{n\to\infty}\frac{1}{n}\,
\log \norm{A^{(n)}(x)} = L_1(A) \;,\\
&  \lim_{n \to\infty}\frac{1}{n}\,
\log  \norm{\wedge_2 \, A^{(n)}(x)}  = L_1(A)+ L_2(A) < 2 L_1 (A) - \varkappa \;.
\end{align*}

Take $0<\epsilon\ll \varkappa$ small. For any $n_0\in\N$ consider the measurable set $\Omega_{n_0}(\epsilon)$ of $x\in X$ such that for all $n\geq \frac{n_0}{2}$ we have
\begin{equation}\label{convergence:conditions}
  e^{  n \,(L_1(A) - \epsilon) }\leq \norm{A^{(n)}(x)} \leq e^{  n \,(L_1(A)+\epsilon) }  \ \text{ and } \  \norm{\wedge_2\, A^{(n)}(x)} \leq e^{n\,( 2 L_1(A)-\varkappa )}  \;.
\end{equation}
The almost sure convergence of the above functions implies that
$$ \lim_{n\to+\infty} \mu(\Omega_{n}(\epsilon))= 1\;.$$
We assume that $n_0$ is  large enough that also $\mu(X\setminus  \Omega_{n_0}(\epsilon))<\epsilon/2$.

Setting $\Omega :=\Omega_{n_0}(\epsilon)$, by Lemma~\ref{lemma recurrent doubling sequence}
there are integers $r$ and $n_0' > n_0$,
 and a measurable subset 
$\Omega_0\subset \Omega$ such that  for all $x\in\Omega_0$  and $n\geq r\, n_0'$, there is an $\epsilon$-doubling sequence $\{m_i\}_{0\leq i \leq k}$ satisfying  $m_0=n_0$, $m_k=n$,  and  $T^{m_i} x\in\Omega_{n_0}(\epsilon)$ for all $0\leq i <k$.

Item (1) follows from the fact that if $x\in \Omega_{n_0}(\epsilon)$,  then
\begin{equation*}
\rgap( \An{n}(x) ) = \frac{\norm{\An{n}(x)}^2}{\norm{\wedge_2\, \An{n}(x)}} \geq e^{n\,(\varkappa-2\epsilon)}\quad \text{ for all } \; n\geq \frac{n_0}{2} \;.
\end{equation*}

Applying the estimate above with $n :=m_i$ yields the first inequality in item (1), while the second follows by putting $ n := m_{i+1} - m_i$. Note that the $\ep$-doubling condition implies that 
$m_{i+1} - m_i > \frac{1-\ep}{1+\ep} \, m_i \ge \frac{1}{2} \, n_0$.

For item (2) we use~\eqref{convergence:conditions}. Since  $x, T^{m_i} x\in\Omega_{n_0}(\epsilon)$,  
\begin{equation*} 
\frac{\norm{A^{(m_{i+1})}(x)}}{\norm{A^{(m_i)}(x)}\,\norm{A^{(m_{i+1}-m_i)}(T^{m_i} x)}}\geq e^{-2\,\epsilon\,m_{i+1}}  \geq  e^{-5\,\epsilon\,m_{i}} \,.
\end{equation*}

This completes the proof, as the left hand side of the inequality represents the rift $\rift (\An{m_{i}}(x),\,
	\An{m_{i+1} - m_{i}}(T^{m_{i}}x) )$.

\end{proof}

\begin{proof}\emph{(of Proposition~\ref{med:aec})}
We use Proposition~\ref{prop recurrent doubling sequence and AP estimates}. Fix $x \in \Omega_0$, and 
for each $i\geq 0$ we apply the avalanche principle to the sequence of two matrices $g_0=\An{m_i}(x)$ and $g_1=\An{m_{i+1} - m_i}(T^{m_i} x)$, noting that $g_1 \, g_0 = \An{m_{i+1}} (x)$.

The key parameters in this application of the AP are  
$$\kaAP = e^{ -  m_{i} (\varkappa-2\epsilon) (1-\ep) / (1+\ep)} < e^{  - m_{i} (\varkappa - 2 \epsilon) / 2} \quad \text{ and } \quad
\epAP = e^{-5\,\epsilon\, m_i }.$$

Note that $ \frac{\kaAP}{\epAP^2} <  e^{-  m_i (\varkappa / 2 -6 \, \epsilon)} \ll 1$. 

Moreover, item (1) and (2) in  Proposition~\ref{prop recurrent doubling sequence and AP estimates} imply the gap and angle conditions of the AP. Therefore, the avalanche principle (Proposition~\ref{AP-practical}) is applicable and we get:
$$d( \mostexp(\An{m_i}(x)), \,  \mostexp(\An{m_{i+1}}(x)) ) <  \frac{\kaAP}{\epAP} = e^{- m_i \, \theta},$$
where $\theta := (\varkappa - 2 \epsilon)  \, \frac{1-\ep}{1+\ep} - 5 \ep$. Note that as $\ep \to 0$ we have $\theta \to \varkappa$.

By the definition of an $\epsilon$-doubling sequence we have $m_{i+1}\geq \frac{2}{1+\epsilon} \, m_i > \frac{3}{2} \, m_i$, 
hence for all $i\ge0$ we have $m_{i}\geq (3/2)^i \, n_0$.  We then conclude:

\begin{align*}
d( \mostexp(\An{n_0}(x)), \,  \mostexp(\An{n}(x)) ) &=
d( \mostexp(\An{m_0}(x)), \,  \mostexp(\An{m_k}(x)) ) \\
&\leq \sum_{i=0}^{k-1}
d( \mostexp(\An{m_i}(x)), \,  \mostexp(\An{m_{i+1}}(x)) )\\
&\leq \sum_{i=0}^{k-1}
e^{ - m_i\, \theta} 
\leq \sum_{i=0}^{k-1}
e^{ - (3/2)^i\,n_0\, \theta} \less e^{ - n_0\, \theta}.
\end{align*}

 Taking $n_0$ large enough, the bound $e^{ - n_0 \, \theta}$ becomes arbitrarily small.
 This proves that the sequence $\{ \mostexp(\An{n}(x))\}_{n\geq n_0}$ is Cauchy.
 Moreover, passing to the limit as $n\to+\infty$,
$$ d(\mostexp(A^{(n_0)}(x)), \medir{\infty}(A)(x) ) \lesssim 
e^{- n_0 \, \theta}\;. $$

Therefore, as  $n=n_0$ is arbitrary,
$$ \limsup_{n \to+\infty} \frac{1}{n}\,\log d(\medir{n}(A)(x)), \medir{\infty}(A)(x) ) \le - \theta\,.$$

Finally, since  $\epsilon>0$ can be taken arbitrarily small,
and $\varkappa$ can be taken arbitrarily close to $L_1(A)-L_2(A)$, we conclude that

\qquad $\displaystyle \limsup_{n\to +\infty}\frac{1}{n} \,\log \, d(\mostexp^{(n)}(A)(x), \mostexp^{(\infty)}(A)(x)) \leq L_2(A)-L_1(A)  $.

\end{proof}

\medskip

Given $0\leq k\leq m$,  we define a sequence of partial functions
$\mostexp^{(n)}_k(A)$ on $X$ taking values in $\Gr_k(\R^m)$,
$$ \mostexp^{(n)}_k(A)(x):=
\left\{ \begin{array}{lll}
\mostexp_k(A^{(n)}(x)) & & \text{ if }  \rgap_k(A^{(n)}(x))>1\,, \\
\text{undefined} & & \text{otherwise}.
\end{array} \right. $$

\medskip

\begin{proposition}\label{prop: medir k exists}
If $L_k(A)>L_{k+1}(A)$ then the sequence of partial functions 
$\mostexp^{(n)}_k(A)$ from $X$ to $\Gr_k(\R^m)$ is almost everywhere Cauchy.  In particular, it  converges $\mu$ almost everywhere to a (total) measurable function $\medir{\infty}_k(A):X\to\Gr_k(\R^m)$.
Moreover,  for $\mu$-a.e. $x\in X$,
$$\limsup_{n\to +\infty}\frac{1}{n} \,\log \, d(\medir{n}_k(A)(x), \medir{\infty}_k(A)(x)) \leq L_{k+1}(A)-L_k(A) <0 \;.$$
\end{proposition}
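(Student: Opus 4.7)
The plan is to reduce the statement to the $k=1$ case already established in Proposition~\ref{med:aec} by passing to the $k$-th exterior power cocycle $\wedge_k A$.

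First I would verify that Proposition~\ref{med:aec} applies to $\wedge_k A$. Since $\norm{\wedge_k A(x)} \le \norm{A(x)}^k$, the exterior power cocycle is $\mu$-integrable whenever $A$ is, and combining Corollary~\ref{L1 wedge = sum Lj} with~\eqref{L1 Wedge i: 1} yields
$$L_1(\wedge_k A) - L_2(\wedge_k A) = L_k(A) - L_{k+1}(A) > 0.$$
Proposition~\ref{med:aec} then produces a $\mu$ almost everywhere Cauchy sequence $\mostexp^{(n)}(\wedge_k A) \colon X \to \Pp(\wedge_k \R^m)$ converging to a measurable $\mostexp^{(\infty)}(\wedge_k A)$, with
$$\limsup_{n\to\infty}\tfrac{1}{n}\log d\bigl(\mostexp^{(n)}(\wedge_k A)(x),\,\mostexp^{(\infty)}(\wedge_k A)(x)\bigr) \le L_{k+1}(A) - L_k(A).$$

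Next I would transfer this back to $\Gr_k(\R^m)$ via the Plücker embedding $\mathrm{Pl}\colon \Gr_k(\R^m) \hookrightarrow \Pp(\wedge_k \R^m)$ sending $\mathrm{span}(v_1,\ldots,v_k) \mapsto [v_1\wedge\cdots\wedge v_k]$. If $u_1,\ldots,u_k$ are orthonormal right-singular vectors of $g \in \Mat(m,\R)$ associated to its top $k$ singular values, then $u_1\wedge\cdots\wedge u_k$ is a unit top singular vector of $\wedge_k g$, so
$$\mathrm{Pl}(\mostexp_k(g)) = \mostexp(\wedge_k g).$$
Moreover the gap conditions coincide, because
$$\rgap(\wedge_k g) = \frac{s_1(g)\cdots s_k(g)}{s_1(g)\cdots s_{k-1}(g)\,s_{k+1}(g)} = \rgap_k(g),$$
so the partial functions $\mostexp^{(n)}_k(A)$ and $\mathrm{Pl}^{-1}\circ \mostexp^{(n)}(\wedge_k A)$ share the same domain and agree on it.

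Finally, since $\Gr_k(\R^m)$ is compact and $\mathrm{Pl}$ is a smooth injective embedding into projective space, it is bi-Lipschitz onto its closed image. This lets me pull back both the Cauchy property and the exponential convergence rate from $\Pp(\wedge_k \R^m)$ to $\Gr_k(\R^m)$, giving the measurable limit $\mostexp^{(\infty)}_k(A) := \mathrm{Pl}^{-1}\circ \mostexp^{(\infty)}(\wedge_k A)$ with the asserted estimate. The main (mild) technical point I expect to address carefully is the bi-Lipschitz comparison between the Grassmann metric and the projective metric restricted to the Plücker image, which is classical; alternatively one could replay the avalanche-principle argument of Proposition~\ref{med:aec} directly at the level of $k$-planes, invoking the $k$-plane version of the AP from~\cite{LEbook-chap2}, but the exterior-power reduction is both cleaner and yields the sharp rate automatically.
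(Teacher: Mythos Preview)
Your proposal is correct and follows exactly the paper's approach: the paper's proof is the single line ``Apply proposition~\ref{med:aec} to the cocycle $\wedge_k A$,'' and you have simply unpacked the details (integrability of $\wedge_k A$, the gap identity $L_1(\wedge_k A)-L_2(\wedge_k A)=L_k(A)-L_{k+1}(A)$, and the Pl\"ucker transfer) that the paper leaves implicit.
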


\begin{proof}
Apply proposition~\ref{med:aec}
to the cocycle $\wedge_k A$. 
\end{proof}

%
%

\medskip

\begin{definition} \label{def: regular point}
Given a $\mu$-integrable cocycle $A$, we say that $x\in X$ is a $\mu$-regular point if whenever $L_j(A)> L_{j+1}(A)$, we have  
\begin{align*}
& \lim_{n\to +\infty}\frac{1}{n}\,\log \norm{\wedge_j\, \An{n}(x)} = 
L_1(\wedge_j \, A) \quad \text{ for }\; 1\leq j\leq m  \\
&  \limsup_{n\to +\infty}  \frac{1}{n}\,\log d\left( \medir{n}_j(A)(x),\,  \medir{\infty}_j(A)(x)\right) \le L_{j+1}(A)-L_j(A)\,. 
\end{align*}
\end{definition}

\medskip

\begin{proposition}
The set of $\mu$-regular points of a cocycle 
has full $\mu$-measure.
\end{proposition}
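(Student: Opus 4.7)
The plan is to exhibit the set of $\mu$-regular points as a finite intersection of previously established sets of full $\mu$-measure; no diagonal argument or countability issue arises, since Definition~\ref{def: regular point} involves only finitely many indices $j \in \{1, \ldots, m\}$.

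First, for each $1 \le j \le m$, I would apply Corollary~\ref{L1 wedge = sum Lj} (Kingman's theorem applied to the exterior powers) to produce a $\mu$-conull set $X_j^{(1)} \subset X$ on which
$$\frac{1}{n}\log \norm{\wedge_j\, \An{n}(x)} \longrightarrow L_1(A) + \ldots + L_j(A).$$
Since $\log^+ \norm{\wedge_j A} \le j \log^+ \norm{A}$, the cocycle $\wedge_j A$ is $\mu$-integrable, so Proposition~\ref{prop  L1 = lim log An} identifies the above limit with $L_1(\wedge_j A)$. This gives the first condition of Definition~\ref{def: regular point} on $X_j^{(1)}$, independently of whether a Lyapunov gap occurs at level $j$.

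Second, let $J := \{\, 1 \le j \le m : L_j(A) > L_{j+1}(A) \,\}$ with the convention $L_{m+1}(A) = -\infty$, so $J$ is a finite set. For each $j \in J$, Proposition~\ref{prop: medir k exists} supplies a $\mu$-conull set $X_j^{(2)}$ on which $\mostexp_j^{(n)}(A)(x)$ is eventually well-defined, converges to $\mostexp_j^{(\infty)}(A)(x)$, and satisfies
$$\limsup_{n\to+\infty} \frac{1}{n}\,\log d\left(\mostexp_j^{(n)}(A)(x),\, \mostexp_j^{(\infty)}(A)(x)\right) \le L_{j+1}(A) - L_j(A),$$
which is precisely the second condition of Definition~\ref{def: regular point}.

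Setting
$$X_{\rm reg} := \bigcap_{j=1}^{m} X_j^{(1)} \;\cap\; \bigcap_{j \in J} X_j^{(2)},$$
one obtains a $\mu$-conull set all of whose points are $\mu$-regular. There is no real obstacle here: the substance already lives in Corollary~\ref{L1 wedge = sum Lj} and Proposition~\ref{prop: medir k exists}, and the present proposition is merely the bookkeeping observation that those full-measure conditions can be imposed simultaneously because the index $j$ ranges over a finite set.
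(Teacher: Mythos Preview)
Your proof is correct and follows exactly the same approach as the paper's own proof, which simply reads ``Combine corollary~\ref{L1 wedge = sum Lj} and proposition~\ref{prop: medir k exists}.'' You have merely spelled out the bookkeeping that this one-line proof leaves implicit.
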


\begin{proof}
Combine corollary~\ref{L1 wedge = sum Lj} and proposition~\ref{prop: medir k exists}.

\end{proof}

\begin{definition}\label{singular basis}
Given a linear map $g:V\to V'$ between Euclidean spaces $V$ and $V'$ of dimension $m$, we call {\em singular basis of $g$} any orthonormal basis $\{ v_j\}_{1\leq j\leq m}$ of $V$ consisting of singular vectors $v_j$ of $g$ such that $\norm{g\, v_j}= s_j(g)$ for all $j=1,\ldots, m$.
\end{definition}
Note that for every $1\leq  k\leq m$, the unit $k$-vector $v_1\wedge \cdots \wedge v_k$ is a most expanding vector of $\wedge_k\, g$.

%

\medskip

\begin{proposition} 
\label{prop:lim mostexp infty = Lk}
Consider a $\mu$-integrable cocycle $A$, and a $\mu$-regular point $x\in X$.
If $\gamma= L_1(A)=  \cdots = L_k(A)>L_{k+1}(A)$  
then for any $v\in \medir{\infty}_k(A)(x)\setminus\{0\}$, we have
 $$ \lim_{n\to+\infty}\frac{1}{n}\,\log\norm{\An{n}(x)\,v}
 = \gamma \;.$$
 
 In particular, \ $ \lambda_A(x,v) = \lambda_A^-(x,v) = \gamma$.
\end{proposition}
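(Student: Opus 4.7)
The plan is to compare an arbitrary unit vector $v \in \mostexp_k^{(\infty)}(A)(x)$ with its orthogonal projection $u_n$ onto the finite scale most expanding $k$-plane $\mostexp_k^{(n)}(A)(x)$, and to exploit that on this latter subspace the iterate $A^{(n)}(x)$ expands every unit vector at rate $\gamma$ (since $L_1(A) = \cdots = L_k(A) = \gamma$), while the correction $v - u_n$ decays geometrically at a rate strictly greater than $\gamma$ in absolute value. Thus the image of the correction, although magnified by $\|A^{(n)}(x)\| \approx e^{n\gamma}$, only grows like $e^{n L_{k+1}}$, and is dominated by the main term.

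Concretely, fix $\epsilon > 0$ with $0 < \epsilon < (\gamma - L_{k+1})/4$. From $\mu$-regularity at $x$, Corollary~\ref{L1 wedge = sum Lj}, and the identity $s_j(g) = \|\wedge_j g\|/\|\wedge_{j-1} g\|$, I obtain that for all sufficiently large $n$,
\[
e^{n(\gamma - \epsilon)} \,\le\, s_k(A^{(n)}(x)) \,\le\, s_1(A^{(n)}(x)) = \|A^{(n)}(x)\| \,\le\, e^{n(\gamma + \epsilon)}.
\]
Regularity together with Proposition~\ref{prop: medir k exists} also gives, for $n$ large,
\[
d(\mostexp_k^{(n)}(A)(x), \mostexp_k^{(\infty)}(A)(x)) \,\le\, e^{n(L_{k+1} - \gamma + \epsilon)}.
\]
Letting $P_n$ denote the orthogonal projection onto $\mostexp_k^{(n)}(A)(x)$ and setting $u_n := P_n v$, the standard identification of Grassmann distance with projection norms yields $\|v - u_n\| \le e^{n(L_{k+1} - \gamma + \epsilon)}$, so $\|u_n\| \to 1$ and I may assume $\|u_n\| \ge 1/2$. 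Working in a singular basis of $A^{(n)}(x)$ (as in Definition~\ref{singular basis}), the unit vector $u_n/\|u_n\|$ lies in the span of the top $k$ singular vectors, so $s_k(A^{(n)}(x)) \, \|u_n\| \le \|A^{(n)}(x) u_n\| \le s_1(A^{(n)}(x))\, \|u_n\|$.

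Combining via the triangle inequality $\|A^{(n)}(x) u_n\| - \|A^{(n)}(x)(v-u_n)\| \le \|A^{(n)}(x) v\| \le \|A^{(n)}(x) u_n\| + \|A^{(n)}(x)(v-u_n)\|$, and using the crude bound
\[
\|A^{(n)}(x)(v-u_n)\| \,\le\, \|A^{(n)}(x)\| \cdot \|v - u_n\| \,\le\, e^{n(L_{k+1}+2\epsilon)},
\]
the choice of $\epsilon$ ensures $L_{k+1}+2\epsilon < \gamma-\epsilon$, so for all $n$ large enough
\[
\tfrac{1}{4}\,e^{n(\gamma - \epsilon)} \,\le\, \|A^{(n)}(x) v\| \,\le\, 2\, e^{n(\gamma + \epsilon)}.
\]
Taking logarithms, dividing by $n$, and letting $\epsilon \to 0$ gives $\lim_{n\to\infty} \frac{1}{n}\log \|A^{(n)}(x) v\| = \gamma$, hence in particular $\lambda_A(x,v) = \lambda_A^-(x,v) = \gamma$. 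The only real obstacle is ensuring that the error term dominated by $\|A^{(n)}(x)\|\cdot \|v - u_n\|$ is strictly smaller than the main term $\|A^{(n)}(x)u_n\|$; this is secured precisely by the exponential Grassmann-convergence rate in Proposition~\ref{prop: medir k exists}, whose strictly negative exponent $L_{k+1} - \gamma$ is the geometric manifestation of the spectral gap $\gamma > L_{k+1}$.
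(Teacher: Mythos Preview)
Your proof is correct and takes a genuinely different route from the paper's. The paper first fixes a \emph{limit singular basis} $\{v_1,\ldots,v_k\}$ of $\medir{\infty}_k(A)(x)$ obtained as a subsequential limit of singular bases of $\An{n}(x)$, and uses the wedge product $w=v_1\wedge\cdots\wedge v_k$ together with the inequality $\norm{\wedge_k \An{n}(x)\,w}\le \prod_j\norm{\An{n}(x)\,v_j}$ to squeeze each $\frac{1}{n}\log\norm{\An{n}(x)\,v_j}$ toward $\gamma$; it then treats a general $v$ by a separate contradiction argument, expanding $v$ in a subsequential singular basis. Your argument instead projects $v$ directly onto the finite-scale subspace $\medir{n}_k(A)(x)$ and uses the quantitative convergence rate from Proposition~\ref{prop: medir k exists} to show that the projection error, even after amplification by $\norm{\An{n}(x)}$, is exponentially negligible compared to the main term. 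This is more direct: it avoids wedge products, subsequence extraction, and the two-stage (basis vectors, then general $v$) structure of the paper's proof. The paper's route has the minor byproduct of establishing the result first for the distinguished limit singular basis, but this is not reused elsewhere; your approach makes more transparent that the spectral gap $\gamma-L_{k+1}$ is exactly what allows the finite-scale approximation error to be absorbed.
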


\begin{proof}
Consider a singular basis $\{v_{1,n},\ldots, v_{m,n}\}$ for the linear map $\An{n}(x)$.
Let  $\{v_1,\ldots, v_k\} \subset \medir{\infty}_k(A)(x)$ be an orthonormal family  obtained as  limit of the sequence $\{v_{1,n_s},\ldots, v_{k,n_s}\}$, for some subsequence of integers $n_s$.  

Let $w_n= v_{1,n}\wedge \ldots \wedge v_{k,n}$
and $w= v_1 \wedge \ldots \wedge v_k$. 

After possibly changing the sign of $v_{1,n}$, and since  (by Proposition~\ref{med:aec} and the fact that $x$ is $\mu$-regular)  in the projective space $\hat{w_n} \to \hat{w}$, we have 
$w_n\to w$ as $n\to+\infty$.  Then
\begin{align*}  
\abs{ \frac{ \norm{\wedge_k \, \An{n}(x)\,w} }{ \norm{ \wedge_k \, \An{n}(x)\,w_{n}} } - 1}
&= \frac{ \abs{ \norm{\wedge_k \, \An{{n}}(x)\,w}  - \norm{\wedge_k \, \An{{n}}(x)\, w_n} }}{ \norm{\wedge_k \, \An{n}(x)} } \\
&  \leq  \frac{ \norm{\wedge_k \,\An{{n}}(x)\,w   -   \wedge_k \,\An{{n}}(x)\,w_{n}} }{ \norm{\wedge_k \,\An{n}(x)} } \\
& \leq \norm{ w   -  w_{n}}  \to  0 \quad \text{ as } n \to \infty \,.
\end{align*}

Since
$$  \norm{\wedge_k \, \An{n}(x)\,w}\leq \prod_{j=1}^k 
\norm{\An{n}(x)\,v_j} \;, $$
we have
\begin{align*}
k\,\gamma &= L_1(A)+\cdots + L_k(A) =
 \lim_{n\to \infty} \frac{1}{n}\,
\log \norm{\wedge_k \,\An{n}(x)} \\
& =
\lim_{n\to \infty} \frac{1}{n}\,
\log \norm{\wedge_k \, \An{n}(x)\,w_{n}} =
\lim_{n\to \infty} \frac{1}{n}\,
\log \norm{\wedge_k \, \An{n}(x)\,w}  \\
&\leq  \liminf_{n\to \infty} \frac{1}{n}\,
\sum_{j=1}^k \log \norm{\An{n}(x)\,v_j}
\leq \limsup_{n\to \infty} \frac{1}{n}\,
 \sum_{j=1}^k \log \norm{\An{n}(x)\,v_j} \\
&\leq  \lim_{n\to \infty} \frac{1}{n}\,
\sum_{j=1}^k \log \norm{\An{n}(x)} = k\, \gamma  \;. 
\end{align*}

Thus, the sequences
\ $ \displaystyle c_{i,n}:= \frac{1}{n}\,\log \norm{\An{n}(x)} - 
\frac{1}{n}\,\log \norm{\An{n}(x)\,v_i}\geq 0
$ \ 
satisfy
$$ 0\leq c_{i,n} \leq \sum_{j=1}^k c_{j,n} =
\frac{k}{n}\,\log\norm{\An{n}(x)}-\frac{1}{n}\,\sum_{j=1}^k \log \norm{\An{n}(x)\,v_j}\;. $$
But since the right-hand-side converges to $0$,
we obtain that for all $j=1,\ldots, k$,
$\lim_{n\to +\infty} c_{j,n}=0$,
or, equivalently, that
 $$ \lambda_A(x,v_j)= \lim_{n\to+\infty}\frac{1}{n}\,\log\norm{\An{n}(x)\,v_j}= \gamma \;.$$
 
 Now, given $v\in \medir{\infty}_k(A)(x)\setminus\{0\}$,
assume, by contradiction, that there exists a sequence $n_s\to +\infty$ such that 
\begin{equation}\label{absurd assumption}
\lim_{n\to+\infty}\frac{1}{n_s}\,\log\norm{\An{n_s}(x)\,v} < \gamma \;.\end{equation}
Possibly changing the sub-limits $v_j$, and extracting a subsequence of $n_s$,
we may assume that $v_{j, n_s}\to v_j$ as $s\to +\infty$, for all $1\leq j\leq k$.  Pick any $j$ such that $\langle v,v_j\rangle\neq 0$.
Since the vectors $\An{n_s}(x)\,v_{j,n_s}$ are pairwise orthogonal,
\begin{align*}
\norm{\An{n_s}(x)\,v}^2   & =  \sum_{j=1}^k \langle v, v_{j,n_s}\rangle^2\,\norm{\An{n_s}(x)\,v_{j,n_s}}^2 \\
&= \sum_{j=1}^k \langle v, v_{j, n_s}\rangle^2\, s_j( \An{n_s}(x))^2 
\geq \langle v, v_{j, n_s}\rangle^2\, s_j( \An{n_s}(x))^2 \;.
\end{align*} 
Hence, taking logarithms, dividing by $n_s$ and passing to the limit
we get
$$ \lim_{s\to +\infty}
\frac{1}{n_s}\,\log \norm {\An{n_s}(x)}\geq
\lim_{s\to +\infty}
\frac{1}{n_s}\,\log s_j( \An{n_s}(x)) = L_j(A)=\gamma\;, 
$$
which contradicts ~\eqref{absurd assumption}. This proves that
$$ \lambda_A(x,v) = \lim_{n\to+\infty}\frac{1}{n}\,\log\norm{\An{n}(x)\,v}= \gamma \;, $$
which concludes the proof. 
\end{proof}

%
%
%
%
%
%
%
%

\begin{corollary} 
\label{coro:lim mostexp infty = L1}
Given a cocycle $A$ such that $L_1(A)>L_2(A)$,
 for any $\mu$-regular point $x\in X$, and all
 $v\in \medir{\infty}(A)(x)\setminus\{0\}$, \,
$ \lambda_A(x,v)= \lambda_A^-(x,v)
=L_1(A)$.
\end{corollary}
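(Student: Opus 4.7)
The plan is to deduce this as the specialization of Proposition~\ref{prop:lim mostexp infty = Lk} to the case $k=1$. First I would record the notational identification: under the canonical isomorphism $\Gr_1(\R^m)\cong\Proj$, the partial function $\mostexp^{(n)}_1(A)(x)$ agrees with $\mostexp^{(n)}(A)(x)$, since the gap condition $\rgap_1(\An{n}(x))>1$ is literally $\rgap(\An{n}(x))>1$ and the most expanding $1$-plane of a matrix is the line spanned by its top singular vector. Passing to the almost-sure limits provided by Propositions~\ref{med:aec} and \ref{prop: medir k exists}, this identification descends to $\medir{\infty}_1(A)(x)=\medir{\infty}(A)(x)$.

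Next I would check that the hypotheses of Proposition~\ref{prop:lim mostexp infty = Lk} hold at $k=1$: the chain $L_1(A)=\cdots=L_k(A)$ collapses trivially, the required strict gap $L_k(A)>L_{k+1}(A)$ reduces to the standing assumption $L_1(A)>L_2(A)$, and $x$ is $\mu$-regular by hypothesis. Setting $\gamma=L_1(A)$, the proposition then yields, for every nonzero $v\in\medir{\infty}_1(A)(x)=\medir{\infty}(A)(x)$, the two-sided limit $\lim_{n\to\infty}\tfrac{1}{n}\log\norm{\An{n}(x)\,v}=L_1(A)$, which is precisely $\lambda_A(x,v)=\lambda_A^-(x,v)=L_1(A)$. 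There is essentially no obstacle here: the only thing to verify is the notational identification of the $k=1$ case of the Grassmannian-valued construction with the projective-valued construction, which is immediate from the definitions.
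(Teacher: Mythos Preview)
Your proof is correct and follows exactly the paper's approach: the paper simply writes ``Follows from proposition~\ref{prop:lim mostexp infty = Lk} with $k=1$,'' and your added notational identification $\medir{\infty}_1(A)=\medir{\infty}(A)$ is the obvious unpacking of that one-line reference.
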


\begin{proof}
Follows from proposition~\ref{prop:lim mostexp infty = Lk} with $k=1$.

\end{proof}

\begin{definition}\label{def adjoint cocycle}
The adjoint of a cocycle $(T,A)$ is the map 
$F_{A^\ast}:\Bundle\to\Bundle$, defined by \, $F_{A^\ast}(x,v)=(T^{-1} x, A(T^{-1} x)^\ast\,v)$.
This cocycle is denoted by  $(T^{-1},A^\ast)$, or simply  $A^\ast$.
\end{definition}

\begin{remark}\label{remark adjoint}\normalfont
 The adjoint cocycle satisfies for any $n\in\N$ and $x\in X$,
$$ (A^\ast)^{(n)}(x) = \An{n}(T^{-n} x)^\ast \;.$$
\end{remark}

\begin{proposition}
If $A$ is $\mu$-integrable then the adjoint cocycle $A^\ast$ is also  $\mu$-integrable.

Moreover, the cocycle $A$ and its adjoint $A^\ast$ have the same Lyapunov exponents, $L_i(A)=L_i(A^\ast)$ for all $i=1,\ldots, m$.

\end{proposition}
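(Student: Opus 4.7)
The plan is to first establish $\mu$-integrability of $A^\ast$ by a direct change of variables, then compute the top Lyapunov exponent $L_1(A^\ast)$ via Kingman's theorem, and finally reduce the equality of the remaining exponents to the $L_1$ case by applying the identity $L_1(\wedge_i A) = L_i(A) + L_1(\wedge_{i-1} A)$ from Proposition~\ref{prop: LE and SVs} to both $A$ and $A^\ast$.

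\medskip

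First I would note that for any matrix $g$ one has $\norm{g^\ast}=\norm{g}$. Hence, writing $A^\ast(x) = A(T^{-1}x)^\ast$ (as in Definition~\ref{def adjoint cocycle}) and using the $T^{-1}$-invariance of $\mu$,
\begin{align*}
\int_X \log^+ \norm{A^\ast(x)}\,d\mu(x)
&= \int_X \log^+ \norm{A(T^{-1}x)}\,d\mu(x) \\
&= \int_X \log^+ \norm{A(y)}\,d\mu(y) < +\infty,
\end{align*}
so $A^\ast$ is $\mu$-integrable.

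\medskip

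Next, to compare top exponents, I would combine Proposition~\ref{prop  L1 = lim log An} with Kingman's theorem: $L_1(A^\ast) = \inf_{n\ge 1} \tfrac{1}{n} \int \log \norm{(A^\ast)^{(n)}}\,d\mu$ and analogously for $A$. Using Remark~\ref{remark adjoint}, $(A^\ast)^{(n)}(x) = \An{n}(T^{-n}x)^\ast$, so $\norm{(A^\ast)^{(n)}(x)} = \norm{\An{n}(T^{-n}x)}$; by the $T$-invariance of $\mu$,
\[
\int_X \log \norm{(A^\ast)^{(n)}(x)}\,d\mu(x) = \int_X \log \norm{\An{n}(y)}\,d\mu(y)
\]
for every $n$. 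Dividing by $n$ and passing to the infimum yields $L_1(A^\ast) = L_1(A)$.

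\medskip

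Finally, to handle the remaining exponents, I would invoke the linear-algebraic identity $\wedge_i(g^\ast) = (\wedge_i g)^\ast$, which together with the cocycle relation $(A^\ast)^{(n)} = (\An{n})^\ast\circ T^{-n}$ gives $\wedge_i(A^\ast) = (\wedge_i A)^\ast$ as cocycles (up to the same base change $T \leftrightarrow T^{-1}$). Applying the $L_1$ case of the previous step to the cocycle $\wedge_i A$ (which is $\mu$-integrable because $\norm{\wedge_i A} \le i\,\norm{A}$) we get $L_1(\wedge_i(A^\ast)) = L_1(\wedge_i A)$ for every $i$. The formula~\eqref{L1 Wedge i: 1} then reads
\[
L_i(A^\ast) = L_1(\wedge_i(A^\ast)) - L_1(\wedge_{i-1}(A^\ast)) = L_1(\wedge_i A) - L_1(\wedge_{i-1} A) = L_i(A),
\]
with the usual convention handling the case where some $L_1(\wedge_j A) = -\infty$ (in which case both sides are simultaneously $-\infty$, as in the proof of Proposition~\ref{prop: LE and SVs}). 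The main point requiring care is just the bookkeeping in this last step, since one must verify that the exterior-power identity together with the base inversion preserves the Kingman-type average and that the Lyapunov spectra are compared index by index; none of this is deep, but it is the only nontrivial ingredient beyond the change of variables.
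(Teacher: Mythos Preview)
Your proof is correct and follows essentially the same idea as the paper's: both use the $T$-invariance of $\mu$ together with Remark~\ref{remark adjoint} and the fact that adjoints preserve norms. The paper is marginally more direct, invoking that $g$ and $g^\ast$ share \emph{all} singular values to get $L_i(A)=L_i(A^\ast)$ in one stroke via Proposition~\ref{prop: LE and SVs}, whereas you first establish the $L_1$ case and then reduce via exterior powers; also note a small slip: the correct inequality is $\log^+\norm{\wedge_i A}\le i\,\log^+\norm{A}$ (equivalently $\norm{\wedge_i g}\le\norm{g}^i$), not $\norm{\wedge_i A}\le i\,\norm{A}$.
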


\begin{proof}
The integrability of $A^\ast$ 
follows from the relation $\norm{A}=\norm{A^\ast}$.

The second statement is a consequence of a linear operator and its adjoint sharing the same singular values. In fact, by proposition~\ref{prop: LE and SVs} and remark~\ref{remark adjoint}
\begin{align*}
L_i(A) &= \lim_{n\to+\infty} \frac{1}{n}\,\int_X \log \norm{s_i(\An{n}(x))}\,d\mu(x) \\
&= \lim_{n\to+\infty} \frac{1}{n}\,\int_X \log \norm{s_i(\An{n}(T^{-n} x))}\,d\mu(x) \\
&= \lim_{n\to+\infty} \frac{1}{n}\,\int_X \log \norm{s_i((A^\ast)^{(n)}(x))}\,d\mu(x) = L_i(A^\ast)\;. \quad 
\end{align*}
\end{proof}

\begin{lemma}\label{lemma  alpha v* v >0}
If $L_1(A)>L_{2}(A)$ then  for $\mu$-almost every  $x\in X$,
$$\aangle\left( \medir{\infty}(A^\ast)(x), \, \medir{\infty}(A)(x)\right)>0\;.$$
\end{lemma}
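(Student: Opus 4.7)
The plan is to reinterpret the conclusion as a statement about the adjoint direction alone, and then to establish that statement by a two-scale factorization of the iterates.

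I would begin by noting that the one-dimensional subspace $\medir{\infty}(A)(x)$ coincides with the Euclidean orthogonal complement of the slow filtration subspace $F_{\lambda_k}(A)(x)$: the remaining $m-1$ right singular vectors of $\An{n}(x)$ span the Euclidean complement of $\mostexp(\An{n}(x))$, and in the limit this complement is exactly $F_{\lambda_k}(A)(x)$, since any vector outside it would exhibit forward growth rate $L_1(A)$, contradicting the bound $s_2(\An{n}(x))=e^{nL_2(A)+o(n)}$ from Proposition~\ref{prop: LE and SVs}. Consequently the claim $\aangle(\medir{\infty}(A^\ast)(x),\medir{\infty}(A)(x))>0$ is equivalent to $\medir{\infty}(A^\ast)(x)\not\subset F_{\lambda_k}(A)(x)$, which in turn I would deduce from the sharper assertion that every $u\in \medir{\infty}(A^\ast)(x)\setminus\{0\}$ satisfies $\lambda_A(x,u)=L_1(A)$.

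For this, I would factor $\An{2n}(T^{-n}x)=\An{n}(x)\cdot \An{n}(T^{-n}x)$ and evaluate both sides on $a_n:=\mostexp(\An{n}(T^{-n}x))$. The inner factor sends $a_n$ to $s_1(\An{n}(T^{-n}x))\,w_n$, with $w_n:=\mostexp((A^\ast)^{(n)}(x))$ the corresponding top left singular vector; Proposition~\ref{med:aec} applied to the adjoint cocycle gives $w_n\to u$ at the geometric rate $e^{-n(L_1-L_2-\ep)}$, while Proposition~\ref{prop  L1 = lim log An} applied to $A^\ast$ gives $s_1(\An{n}(T^{-n}x))=e^{nL_1+o(n)}$. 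Simultaneously $a_n\to \medir{\infty}(A)(T^{-n}x)$, and since the limiting direction lies outside $F_{\lambda_k}(A)(T^{-n}x)$, Proposition~\ref{prop:lim mostexp infty = Lk} yields the lower bound $\|\An{2n}(T^{-n}x)\,a_n\|=e^{2nL_1+o(n)}$. Dividing by $s_1(\An{n}(T^{-n}x))$ gives $\|\An{n}(x)\,w_n\|=e^{nL_1+o(n)}$, and replacing $w_n$ by $u$ introduces only an error of order $\|\An{n}(x)\|\cdot\|w_n-u\|\lesssim e^{n(L_2+\ep)}$, negligible against $e^{nL_1}$ once $\ep<L_1-L_2$. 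Hence $\|\An{n}(x)\,u\|=e^{nL_1+o(n)}$, giving $\lambda_A(x,u)=L_1(A)$ as required.

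The hard part I expect is the \emph{diagonal} character of these estimates: Propositions~\ref{med:aec} and~\ref{prop:lim mostexp infty = Lk} produce their convergence as the iteration length $n\to\infty$ for a fixed base point, whereas in the argument above the base point $T^{-n}x$ shifts with $n$. Resolving this requires showing that the implicit rate constants, which are finite $\mu$-a.e.\ by those propositions, grow at most subexponentially along the backward orbit $\{T^{-n}x\}_{n\ge 0}$. A Borel--Cantelli argument exploiting the $T$-invariance of $\mu$ should do the job once one extracts from the proof of Proposition~\ref{med:aec} an $L^p$ or comparable tail bound on these constants. An alternative is to transcribe the diagonal quantities via the identity $\An{2n}(T^{-n}x)=(A^\ast)^{(2n)}(T^nx)^\ast$, together with a Birkhoff-type argument along the two-sided orbit of $x$ applied to $\log\|A(\cdot)\,v(\cdot)\|$, so that only fixed-base-point limits at $x$ are needed.
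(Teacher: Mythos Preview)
Your reduction to showing $\lambda_A(x,u)=L_1(A)$ for $u\in\medir{\infty}(A^\ast)(x)$ is correct and is in fact the stronger statement that the paper proves later as Proposition~\ref{prop  lim 1/n log ||An v*|| = L1}---using the present lemma. So you are attempting to bypass the lemma and prove that proposition directly. The difficulty is precisely the one you flag, and it is more serious than a matter of tail bounds. Your claimed lower bound $\|\An{2n}(T^{-n}x)\,a_n\|=e^{2nL_1+o(n)}$ does not follow from Proposition~\ref{prop:lim mostexp infty = Lk}: that proposition gives asymptotics in the iteration length at a \emph{fixed} base point for a \emph{fixed} vector in the limiting subspace, whereas here the base point $T^{-n}x$, the vector $a_n$, and the iteration length $2n$ are all coupled. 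To obtain the bound you would need to know that $a_n$ is well aligned with $\mostexp(\An{2n}(T^{-n}x))$, which via the AP reduces to the rift condition
\[
\frac{\|\An{2n}(T^{-n}x)\|}{\|\An{n}(x)\|\,\|\An{n}(T^{-n}x)\|}\ \gtrsim\ e^{-\epsilon n}.
\]
But the rift is (up to constants) the angle $\aangle(\mostexp(\An{n}(T^{-n}x)^\ast),\mostexp(\An{n}(x)))=\aangle(\medir{n}(A^\ast)(x),\medir{n}(A)(x))$, so once it is established you have the finite-scale version of the lemma directly and can pass to the limit---which is exactly the paper's route. Your suggested alternative via $\log\|A(\cdot)v(\cdot)\|$ and Birkhoff is the argument of Proposition~\ref{prop  lim 1/n log ||An v*|| = L1}, which in the paper's development presupposes the lemma.

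The paper's proof avoids all of this by never appealing to growth rates of individual vectors. It uses Proposition~\ref{prop recurrent doubling sequence and AP estimates} to control, along an $\epsilon$-doubling sequence of times $m_i$ with $T^{m_i}x$ recurring to a good set, the norm ratio $\|\An{m_{i+1}}\|/(\|\An{m_i}\|\,\|\An{m_{i+1}-m_i}\|)\ge e^{-O(\epsilon)m_i}$; this ratio \emph{is} the finite-scale angle $\aangle(\medir{m_i}(A^\ast),\medir{m_{i+1}-m_i}(A))$ at the shifted point, and then Proposition~\ref{med:aec} closes the gap to the limit directions because the convergence rate $e^{-m_i(\varkappa-\epsilon)}$ beats $e^{-O(\epsilon)m_i}$. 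The moving-base-point issue is resolved not by tail bounds on implicit constants but by the recurrence construction of Lemma~\ref{lemma recurrent doubling sequence}.
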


\begin{proof}
Take $0<\epsilon \ll \varkappa:= L_1(A)-L_2(A)$, and 
consider the measurable set $\Omega_0$ and the order $n_0\in\N$ provided by proposition~\ref{prop recurrent doubling sequence and AP estimates}. For $x\in\Omega_{0}$ let $\{m_i\}_i$ be an $\epsilon$-doubling sequence of avalanche times. Then for all $i\geq 0$,
\begin{align*}
 \aangle\left( \An{m_i}(x),\, \An{m_{i+1}-m_i}(T^{m_i} x) \right) & \asymp \frac{\norm{\An{m_{i+1}}(x)}}{\norm{\An{m_i}(x)}\,\norm{\An{m_{i+1}-m_i}(T^{m_i} x)}} \\
& \geq  e^{-4\,m_i\,\epsilon} \;. 
\end{align*}
Let 
$\Omega_i :=T^{m_i} \Omega_{0}$.
We have $\mu(X\setminus \Omega_i)<\epsilon$, and for all
$i\geq 0$ and $x\in\Omega_i$,
\begin{align*}
& \aangle\left( \medir{m_i}(A^\ast)(x),\, \medir{m_{i+1}-m_i}(A)(x) \right)   \\
&\qquad  \quad = \aangle\left( \mostexp(\An{m_i}(T^{-m_i} x)^\ast),\, \mostexp(\An{m_{i+1}-m_i}(x))  \right)\\
&  \qquad \quad = \aangle\left( \An{m_i}(T^{-m_i} x),\, \An{m_{i+1}-m_i}(x) \right)  
\gtrsim e^{-4\,m_i\,\epsilon} \;. 
\end{align*} 
Notice that by proposition~\ref{med:aec}, for $i$ large,
the distances
$$d(\medir{m_i}(A^\ast)(x),\medir{\infty}(A^\ast)(x))\quad \text{ and } \quad d(\medir{m_{i+1}-m_i}(A)(x),\medir{\infty}(A)(x))$$  
are much smaller than $e^{-4\,m_i\,\epsilon}$.
Hence
$$ \aangle\left( \medir{\infty}(A^\ast)(x),\, \medir{\infty}(A)(x) \right) 
\gtrsim e^{-4\,m_i\,\epsilon} >0  $$
on the set $\Omega_i$, which has measure $\mu(\Omega_i)>1-\epsilon$.
This proves the lemma.

\end{proof}

Given a measurable sub-bundle $\hatv:X\to \Pp(\R^m)$,
we call {\em unit measurable section} of $\hatv:X\to \Pp(\R^m)$ to any measurable function $v:X\to\R^m$ such that $\norm{v(x)}=1$ and $v(x)\in \hatv(x)$ for $\mu$-a.e. $x\in X$.

\begin{lemma}\label{A(x)*v(T x) not 0}
Assume $L_1(A)>L_2(A)$ and let $v:X\to\R^m$ be a unit measurable section of $\medir{\infty}(A)$.
Then \, $A(x)^\ast v(T x)\neq 0$ \, for $\mu$-almost every $x\in X$.
\end{lemma}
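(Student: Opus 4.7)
The plan is to reduce the conclusion to a transversality statement between the most expanding directions of $A$ and $A^{\ast}$, and then invoke Lemma~\ref{lemma  alpha v* v >0}. Since $\Ker A(x)^\ast = (\Range A(x))^\perp$, proving $A(x)^\ast v(Tx) \neq 0$ is equivalent to showing that $v(Tx)$ is not orthogonal to $\Range A(x)$. The guiding idea is that the line $\mostexp^{(\infty)}(A^\ast)(Tx)$ lies inside $\Range A(x)$, so the positive angle between this line and $\mostexp^{(\infty)}(A)(Tx) = \llangle v(Tx) \rrangle$ is exactly what we need.

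First, I would record the elementary SVD fact that for any matrix $g$ with $s_1(g) > 0$, the most expanding direction of $g^\ast$ is contained in $\Range g$: if $w_1$ is a top right singular vector of $g$, then $g w_1$ is a top left singular vector and spans $\mostexp(g^\ast)$. Applying this to $g = A^{(n)}(T^{1-n}x)$ and combining with Remark~\ref{remark adjoint} evaluated at $y = Tx$ yields
\[
\mostexp^{(n)}(A^\ast)(Tx) \;=\; \mostexp\bigl(A^{(n)}(T^{1-n}x)^\ast\bigr) \;\subseteq\; \Range A^{(n)}(T^{1-n}x).
\]
Since $A^{(n)}(T^{1-n}x) = A(x)\, A(T^{-1}x) \cdots A(T^{1-n}x)$ has $A(x)$ as its leftmost factor, its range is contained in $\Range A(x)$.

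Next, I would apply Proposition~\ref{med:aec} to the adjoint cocycle $A^\ast$, which is $\mu$-integrable and satisfies $L_1(A^\ast) > L_2(A^\ast)$ by the previous proposition. This gives $\mu$-a.e.\ convergence $\mostexp^{(n)}(A^\ast)(y) \to \mostexp^{(\infty)}(A^\ast)(y)$; by $T$-invariance of $\mu$ the convergence also holds at $y = Tx$ for $\mu$-a.e.\ $x$. Because $\Range A(x)$ is a closed subspace of $\R^m$, passing to the limit preserves the inclusion, so $\mostexp^{(\infty)}(A^\ast)(Tx) \subseteq \Range A(x)$.

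Finally, Lemma~\ref{lemma  alpha v* v >0} applied at $Tx$ asserts
\[
\aangle\bigl(\mostexp^{(\infty)}(A^\ast)(Tx),\; \mostexp^{(\infty)}(A)(Tx)\bigr) \;>\; 0
\]
for $\mu$-a.e.\ $x$. Therefore some unit vector $w$ in $\mostexp^{(\infty)}(A^\ast)(Tx) \subseteq \Range A(x)$ has $\langle w, v(Tx)\rangle \neq 0$, whence $v(Tx) \notin (\Range A(x))^\perp = \Ker A(x)^\ast$ and $A(x)^\ast v(Tx) \neq 0$. I do not foresee a serious obstacle: the only non-bookkeeping step is the SVD identity placing $\mostexp(g^\ast)$ inside $\Range g$, which is a one-line computation, and every other ingredient (integrability and spectral equality of $A^\ast$, a.e.\ convergence of $\mostexp^{(n)}(A^\ast)$, and positivity of the angle) is already at our disposal.
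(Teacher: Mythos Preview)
Your proof is correct and rests on the same two ingredients as the paper's: the positive-angle Lemma~\ref{lemma  alpha v* v >0} and the identification $\medir{n}(A^\ast)(Tx)=\mostexp\bigl(\An{n}(T^{1-n}x)^\ast\bigr)$ coming from Remark~\ref{remark adjoint}. The execution differs slightly. The paper stays at finite $n$: it invokes the quantitative bound $\norm{g^\ast v}\geq \aangle(\hatv,\mostexp(g^\ast))\,\norm{g^\ast}$ (Proposition 2.13(a) in~\cite{LEbook-chap2}) with $g=\An{n}(T^{1-n}x)$, obtains $\norm{\An{n}(T^{1-n}x)^\ast v(Tx)}>0$, and then factors $\An{n}(T^{1-n}x)^\ast=\An{n-1}(T^{1-n}x)^\ast A(x)^\ast$ to read off $A(x)^\ast v(Tx)\neq 0$. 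You instead use the kernel--range duality $\Ker A(x)^\ast=(\Range A(x))^\perp$ together with the elementary SVD fact $\mostexp(g^\ast)\subseteq\Range g$, pass the range inclusion to the limit, and apply the angle lemma at $n=\infty$. Your route is a touch more self-contained (no external norm inequality needed) and makes the geometric picture---that $\medir{\infty}(A^\ast)(Tx)$ lives inside $\Range A(x)$---explicit; the paper's route avoids the limit-in-a-closed-subspace step. Both are short and equally valid.
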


\begin{proof}
Let $v^\ast, v^\ast_n:X\to\R^m$ be  unit measurable sections of $\medir{\infty}(A^\ast)$ and $\medir{n}(A^\ast)$, respectively. By lemma~\ref{lemma  alpha v* v >0},
$\aangle(\hatv(T x),\hatv^\ast(T x))>0$ for $\mu$-a.e. $x\in X$.
By proposition~\ref{med:aec} applied to the adjoint cocycle  $A^\ast$, for $\mu$-a.e. $x\in X$ and all large enough $n \geq 1$,
\begin{align*}
 \alpha_0  &:=\aangle(\hatv(T x),\mostexp(\An{n}(T^{-n+1} (x)^\ast) ) =
\aangle(\hatv(T x),\mostexp( (A^\ast)^{(n)}(T x)) ) \\
& =  \aangle(\hatv(T x),\medir{n}( A^\ast)(T x))   = \aangle(\hatv(T x),\hatv_n^\ast(T x))>0\;. 
\end{align*}
Hence by item (a) of Proposition 2.13 in~\cite{LEbook-chap2},
$$ \norm{\An{n}(T^{-n+1} (x)^\ast\,v(T x)} \geq
\alpha_0 \, \norm{\An{n}(T^{-n+1} (x)^\ast}>0\;. $$
Finally, since
$$ \An{n}(T^{-n+1} (x)^\ast\,v(T x) =
\An{n-1}(T^{-n+1} (x)^\ast\,A(x)^\ast\, v(T x) \;, $$
we  infer that $A(x)^\ast\, v(T x)\neq 0$.

\end{proof}

From now on, given a matrix $A(x)$, and a projective, or Grassmannian, point $\hatv$
we will abbreviate $\varphi_{A(x)} \hatv$ writing   $A(x)\,\hatv$. The following proposition establishes the invariance of the most expanding sub-bundles $\medir{\infty}_k(A)$.

\begin{proposition}\label{ medir k invariance}
If $L_k(A)>L_{k+1}(A)$ then for $\mu$-a.e. $x\in X$,
\begin{enumerate}
\item[(a)] \quad $\displaystyle  A(x)^\ast \, [ \medir{\infty}_k(A)(T x)] = \medir{\infty}_k(A)(x) $,

\item[(b)] \quad $\displaystyle  A(x)^{-1}\, [\medir{\infty}_k(A)(T x)^\perp] = 
 \medir{\infty}_k(A)(x)^\perp$\,.
\end{enumerate}
\end{proposition}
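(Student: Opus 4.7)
My plan is to reduce (a) for general $k$ to the case $k=1$ applied to the exterior-power cocycle $\wedge_k A$, to prove (a) for $k=1$ by an SVD perturbation argument, and to deduce (b) from (a) by a formal duality computation.

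For the reduction, the hypothesis $L_k(A) > L_{k+1}(A)$ together with Proposition~\ref{prop: LE and SVs} gives $L_1(\wedge_k A) > L_2(\wedge_k A)$, which makes the $k=1$ statement applicable to $\wedge_k A$. Under the Pl\"ucker embedding $\Gr_k(\R^m) \hookrightarrow \Pp(\wedge_k \R^m)$, the $k$-plane $\medir{\infty}_k(A)(x)$ corresponds to the direction $\medir{\infty}(\wedge_k A)(x)$; and since $(\wedge_k A(x))^\ast = \wedge_k(A(x)^\ast)$, the $k=1$ invariance for $\wedge_k A$ translates directly into statement (a) for $A$ at the given signature.

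For (a) in the case $k=1$, I would fix a $\mu$-regular $x$, set $M_n := A^{(n)}(Tx)$, $B := A(x)$, and $v_n := \mostexp(M_n)$. The identity $A^{(n+1)}(x) = M_n B$ expands as
$$
A^{(n+1)}(x)^\ast A^{(n+1)}(x) \;=\; B^\ast M_n^\ast M_n\, B \;=\; s_1(M_n)^2\,(B^\ast v_n)(B^\ast v_n)^\ast + R_n,
$$
with $\norm{R_n} \lesssim s_2(M_n)^2\,\norm{B}^2$. Because $L_1(A) > L_2(A)$ the ratio $s_2(M_n)/s_1(M_n)$ decays exponentially, while Lemma~\ref{A(x)*v(T x) not 0} keeps $\norm{B^\ast v_n}$ bounded below in the limit. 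A Davis--Kahan-type rank-one perturbation estimate then places $\mostexp(A^{(n+1)}(x))$ within exponentially small projective distance of $B^\ast v_n$. Passing to the limit as $n \to \infty$: Proposition~\ref{med:aec} gives $\mostexp(A^{(n+1)}(x)) \to \medir{\infty}(A)(x)$ and $v_n \to \medir{\infty}(A)(Tx)$, and continuity of $B^\ast$ on $\Proj$ away from its kernel (again Lemma~\ref{A(x)*v(T x) not 0}) yields $B^\ast v_n \to B^\ast \medir{\infty}(A)(Tx)$, forcing the two limits to agree.

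Part (b) will then follow from (a) by a purely formal orthogonality argument: a vector $v \in \R^m$ satisfies $A(x)v \in \medir{\infty}_k(A)(Tx)^\perp$ iff $\langle v,\,A(x)^\ast w\rangle = 0$ for every $w \in \medir{\infty}_k(A)(Tx)$, which by (a) is equivalent to $v \perp \medir{\infty}_k(A)(x)$; the dimension count works out because Lemma~\ref{A(x)*v(T x) not 0} (lifted via $\wedge_k$) also guarantees that $A(x)^\ast$ is injective on $\medir{\infty}_k(A)(Tx)$. I expect the rank-one perturbation step in the $k=1$ case to be the main technical obstacle: one must verify that $\norm{R_n}/s_1(M_n)^2$ is genuinely lower order than the spectral gap of the rank-one leading term, which in turn relies on the exponential lower bound $\rgap(M_n) \gtrsim e^{n(L_1(A)-L_2(A))/2}$ available at $\mu$-regular points (cf.~Proposition~\ref{prop recurrent doubling sequence and AP estimates}) combined with the uniform lower bound on $\norm{B^\ast v_n}$ provided by Lemma~\ref{A(x)*v(T x) not 0}.
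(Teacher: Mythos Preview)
Your proposal is correct and the overall architecture matches the paper's: reduce general $k$ to $k=1$ via exterior powers, and reduce (b) to (a) by the duality $\langle A(x)v,w\rangle=\langle v,A(x)^\ast w\rangle$ (the paper invokes an external proposition for this; your direct argument is the same content).

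Where you genuinely diverge is in the proof of (a) for $k=1$. The paper does not perturb $A^{(n+1)}(x)^\ast A^{(n+1)}(x)$. Instead it introduces the adjoint cocycle's finite-scale direction $\hat v_n^\ast(y):=\mostexp((A^\ast)^{(n)}(y))$ and uses the identity $\hat v_n(x)=A^{(n)}(x)^\ast\,\hat v_n^\ast(T^n x)$ to write
\[
A(x)^\ast\,\hat v_{n-1}(Tx)=A^{(n)}(x)^\ast\,\hat v_{n-1}^\ast(T^n x)\approx A^{(n)}(x)^\ast\,\hat v_{n}^\ast(T^n x)=\hat v_n(x),
\]
where the $\approx$ is justified by Egorov's theorem (to find a subsequence of times with $T^n x$ in a set of uniform convergence of $\hat v_n^\ast$) together with the projective contraction of $A^{(n)}(x)^\ast$ near its image direction. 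Your rank-one Davis--Kahan argument bypasses both the adjoint direction $\hat v_n^\ast$ and the Egorov step entirely: once you know $\|A(x)^\ast v(Tx)\|>0$ (Lemma~\ref{A(x)*v(T x) not 0}) and $\rgap(A^{(n)}(Tx))\to\infty$ at regular points, the eigenvector of $B^\ast M_n^\ast M_n B$ is forced to align with $B^\ast v_n$. This is more direct and avoids the subsequence extraction, at the cost of importing a standard perturbation bound that the paper does not otherwise use. One minor point: the exponential growth of $\rgap(A^{(n)}(Tx))$ you need is an immediate consequence of Proposition~\ref{prop: LE and SVs} at regular points, so Proposition~\ref{prop recurrent doubling sequence and AP estimates} is not the most natural reference here.
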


\begin{proof}
 By Proposition 2.18 in~\cite{LEbook-chap2}, (b) reduces to (a).
 Working with exterior powers we can reduce (a) to the case $k=1$.

Let us abbreviate
$\hatv(x):=\medir{\infty}(A)(x)$,
$\hatv_n(x):=\medir{n}(A)(x)$,
$\hatv^\ast(x):=\medir{\infty}(A^\ast)(x)$ and 
$\hatv_n^\ast(x):=\medir{n}(A^\ast)(x)$.
With this notation, (a) reduces to  the identity
$A(x)^\ast \hatv(T x) = \hatv(x)$.

By proposition~\ref{med:aec},
\begin{align*}
\hatv(x) & \approx \hatv_n(x) = \mostexp(\An{n}(x)) =
\An{n}(x)^\ast\,\mostexp(\An{n}(x)^\ast) \\
&= \An{n}(x)^\ast\,\hatv_n^\ast(T^n x)
= A(x)^\ast\,\An{n-1}(T x)^\ast\,\hatv_n^\ast(T^n x)\;,
\end{align*}   
and analogously
$$ \hatv(T x)  \approx \hatv_{n-1}(T x) = \An{n-1}(T x)^\ast\,\hatv_{n-1}^\ast(T^n x) \;. $$
Hence
\begin{align*}
A(x)^\ast \,\hatv(T x) & \approx  A(x)^\ast \,\hatv_{n-1}(T x)
= \An{n}(x)^\ast\,\hatv_{n-1}^\ast(T^n x)\\
&\approx \An{n}(x)^\ast\,\hatv_n^\ast(T^n x)
=  \hatv_n(x) \approx \hatv(x)\;.
\end{align*} 
Item (a) follows from taking limits in these proximity relations.

On the first occurrence of $\approx$ we use the continuity of the action of $A(x)^\ast$ on the projective space, and lemma~\ref{A(x)*v(T x) not 0}, which asserts that
$A(x)^\ast v(T x)\neq 0$ for any unit measurable section
$v$ of $\hatv$.

On the second occurrence of $\approx$, take $0<\varkappa< L_1(A)-L_2(A)$, $0<\epsilon\ll \varkappa$ arbitrary small and, by Egorov's theorem,  a measurable subset $E\subset X$ such that $\hatv^\ast_n$ converges uniformly to $\hatv^\ast$ on $E$. Then choose a sequence of times $n\in\N$ such that $T^n x\in E$ and $\rgap(\An{n}(x)^\ast) = \rgap(\An{n}(x)) \geq e^{n\, \varkappa}$.
Because of this large gap ratio, $\An{n}(x)^\ast$ acts as  a strong contraction in a neighborhood of $\hatv_n^\ast(T^n x)$. But for  $T^n x\in E$,
$\hatv_n^\ast( T^n x)$ and $\hatv_{n-1}^\ast( T^n x)$ are both very close to $\hatv^\ast( T^n x)$, and hence close to each other.
Thus
$$  \delta(\,A(x)^\ast \,\hatv_{n-1}(T x), \, A(x)^\ast \,\hatv_{n}(T x)\, ) \ll 
\delta( \hatv_{n-1}(T x), \,  \hatv_{n}(T x)  )  $$
converges to $0$ as $n\to +\infty$.

On the last occurrence of $\approx$ we apply proposition~\ref{med:aec}.

\end{proof}

\begin{lemma}
\label{lemma 1/n f(Tn x)}
Given a measurable function $f:X\to \R$ such that $f-f\circ T\in L^1(X,\mu)$,
then for $\mu$-a.e. $x\in X$,
$$\lim_{n\to+\infty}\frac{1}{n}\,f(T^n x)= 0\;. $$
\end{lemma}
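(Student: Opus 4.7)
\medskip

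The plan is to recognize this as a telescoping identity combined with Birkhoff's ergodic theorem. Set $g := f - f\circ T$, so $g\in L^1(X,\mu)$ by hypothesis. Telescoping gives the basic identity
\begin{equation*}
\sum_{j=0}^{n-1} g(T^j x) \;=\; f(x) - f(T^n x)\,,
\end{equation*}
so that $\frac{1}{n}(f(x)-f(T^n x)) = \frac{1}{n}\sum_{j=0}^{n-1} g(T^j x)$. By Birkhoff's ergodic theorem (Theorem~\ref{bet}) applied to $g$, the right-hand side converges $\mu$-a.e.\ to $\int_X g\,d\mu$. Since $f$ takes real values, $f(x)/n\to 0$ for every $x$, and therefore
\begin{equation*}
\lim_{n\to+\infty}\frac{f(T^n x)}{n} \;=\; -\int_X g\,d\mu \qquad \text{for } \mu\text{-a.e. }x\in X\,.
\end{equation*}

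What remains is to show that $\int_X g\,d\mu = 0$. This is the only subtle point: we do \emph{not} assume $f\in L^1$, so the ``obvious'' cancellation $\int f\,d\mu - \int f\circ T\,d\mu = 0$ is not directly available. I will recover it via a recurrence argument using ergodicity of $T$. Since $f$ is real-valued and measurable, choose $M>0$ large enough so that the set $E_M := \{x\in X : \abs{f(x)}\leq M\}$ has positive measure. By Birkhoff applied to $\ind_{E_M}$ (equivalently, by Poincar\'e recurrence together with ergodicity), for $\mu$-a.e.\ $x\in X$ the orbit $\{T^n x\}_{n\geq 0}$ visits $E_M$ along an infinite subsequence $n_k\to\infty$. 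Along this subsequence, $\abs{f(T^{n_k} x)}\leq M$, hence $f(T^{n_k} x)/n_k\to 0$.

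Combining the two conclusions: on a full-measure set, the limit $\lim_n f(T^n x)/n$ exists and equals $-\int_X g\,d\mu$, while along the subsequence $n_k$ this same limit equals $0$. Therefore $\int_X g\,d\mu = 0$, and consequently $f(T^n x)/n \to 0$ for $\mu$-a.e.\ $x$, as desired. The main (and only) obstacle is precisely the step $\int g\,d\mu = 0$; the rest is the telescoping identity and a direct application of Birkhoff. The recurrence argument neatly circumvents the fact that $f$ itself need not be integrable.
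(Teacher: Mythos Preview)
Your proof is correct and follows the same approach as the paper: the telescoping identity $\frac{1}{n}f(T^n x) = \frac{1}{n}f(x) - \frac{1}{n}\sum_{j=0}^{n-1}(f-f\circ T)(T^j x)$ followed by Birkhoff's theorem. In fact your argument is more complete than the paper's, which simply says ``conclude using Birkhoff's theorem'' without justifying why $\int_X (f - f\circ T)\,d\mu = 0$; your recurrence argument via the sets $E_M$ supplies exactly that missing step.
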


\begin{proof}
Note that
$\displaystyle \frac{1}{n}\, f(T^n x)
=  \frac{1}{n}\,f(x) - \frac{1}{n}\,\sum_{j=0}^{n-1} (f - f\circ T)(T^j x) \;,$
and conclude using Birkhoff's theorem.
\end{proof}

\begin{lemma}
\label{lemma non-integrable Birkhoff}
Let $T:X\to X$ be an ergodic m.p.t. on a probability space $(X,\mu)$, and $f:X\to (0,+\infty)$ a measurable non-integrable function. Then for $\mu$-a.e. $x\in X$,
$$ \lim_{n\to+\infty} \frac{1}{n}\,\sum_{j=0}^{n-1} f(T^j x) = +\infty\;. $$
\end{lemma}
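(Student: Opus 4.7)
The plan is to reduce the statement to Birkhoff's ergodic theorem (Theorem~\ref{bet}) via a truncation argument, exploiting the monotone convergence theorem to exhaust the non-integrable function from below by integrable ones.

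First I would, for each integer $N \geq 1$, define the truncation $f_N(x) := \min\{f(x), N\}$. Each $f_N$ is bounded, hence in $L^1(X,\mu)$, so Birkhoff's theorem applies and yields a full $\mu$-measure set $X_N \subseteq X$ on which
\[
\lim_{n\to+\infty}\frac{1}{n}\sum_{j=0}^{n-1} f_N(T^j x) = \int_X f_N\,d\mu\,.
\]
Since $f(x) > 0$ and $f_N \nearrow f$ pointwise as $N\to\infty$, the monotone convergence theorem combined with the non-integrability hypothesis gives $\int_X f_N\,d\mu \to \int_X f\,d\mu = +\infty$.

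Next I would intersect the countable family of full-measure sets: set $X_\infty := \bigcap_{N\geq 1} X_N$, which still has $\mu(X_\infty)=1$. For any $x \in X_\infty$ and any fixed $N$, the pointwise inequality $f \geq f_N$ on $X$ implies
\[
\liminf_{n\to+\infty}\frac{1}{n}\sum_{j=0}^{n-1} f(T^j x) \;\geq\; \lim_{n\to+\infty}\frac{1}{n}\sum_{j=0}^{n-1} f_N(T^j x) \;=\; \int_X f_N\,d\mu\,.
\]
Since the right-hand side tends to $+\infty$ as $N\to\infty$, the left-hand side must equal $+\infty$ for every $x\in X_\infty$, and so the Ces\`aro averages of $f\circ T^j$ diverge to $+\infty$, as claimed.

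There is essentially no obstacle here: the argument is entirely routine, and the only subtle point is simply to make sure that the exceptional null sets from Birkhoff's theorem, which depend on $N$, are aggregated via a countable intersection before letting $N\to\infty$ --- this is why the truncation is performed only at integer levels $N \in \N$ rather than through a continuous parameter.
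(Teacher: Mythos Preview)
Your proof is correct and is essentially identical to the paper's own argument: both truncate via $f_N=\min\{f,N\}$, apply Birkhoff to each truncation on a full-measure set, intersect these countably many sets, and then use monotone convergence together with $f\ge f_N$ to push the $\liminf$ of the averages to $+\infty$. (The paper writes $f_n=\max\{f,n\}$, but this is evidently a typo for $\min$, since the subsequent comparison $\frac{1}{n}\sum f(T^j x)\ge \frac{1}{n}\sum f_p(T^j x)$ requires $f\ge f_p$.)
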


\begin{proof}
Defining
$f_n= \max\{f,n\}$, by Lebesgue's monotone convergence theorem
$$ \lim_{n\to +\infty} \int_X f_n \,d\mu = 
\int_X f \,d\mu = +\infty \;.$$
For each $n\in\N$, since $f_n$ is $\mu$-integrable there is a full measure set $\mathscr{B}_n\subseteq X$ such that for all $x\in\mathscr{B}_n$,  $ \lim_{m\to+\infty} \frac{1}{m}\,\sum_{j=0}^{m-1} f_n(T^j x) = \int_X f_n \,d\mu$. Thus $\mathscr{B}= \cap_{n\in\N}\mathscr{B}_n$
is also a full measure set.

Given $x\in\mathscr{B}$ and  $L>0$, consider $p\in\N$ such that   $\int_X f_{p} \,d\mu > L$.
Since $x\in\mathscr{B}_p$, there is an order $n_0=n_0(x)>p$ such that for $n\geq n_0$
$$\frac{1}{n}\,\sum_{j=0}^{n-1} f(T^j x)\geq
\frac{1}{n}\,\sum_{j=0}^{n-1} f_{p}(T^j x)\geq L\;,$$
which proves the lemma.

\end{proof}

\begin{proposition}\label{prop  lim 1/n log ||An v*|| = L1}
Assume  $L_1(A)>L_{2}(A)$ and let $v,v^\ast:X\to \R^m$ be  unit measurable sections of $\medir{\infty}(A)$ and $\medir{\infty}(A^\ast)$, respectively.
Then  the functions  $\log \norm{ A  \,v^\ast  }$ and $\log \norm{ (A\circ T^{-1})^\ast  \,v }$ are $\mu$-integrable, and
$$ \int_X \log \norm{ A(x)\,v^\ast(x) }\, d\mu(x)= \int_X \log \norm{ A(T^{-1} x)^\ast\,v(x) }\, d\mu(x) = L_1(A)\;. $$
\end{proposition}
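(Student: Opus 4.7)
The plan is to use the invariance of the top Oseledets directions of $A$ and of $A^\ast$ to reduce each integral to an ergodic Birkhoff average whose limit can be computed independently.

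Applying Proposition~\ref{ medir k invariance}(a) with $k=1$ to the cocycle $A$ produces a measurable scalar $d(x)$ with
$$A(T^{-1}x)^\ast\, v(x) \;=\; d(x)\, v(T^{-1}x),\qquad |d(x)|=\|A(T^{-1}x)^\ast v(x)\|.$$
Applying the same proposition to the cocycle $A^\ast$ (whose adjoint in the sense of Definition~\ref{def adjoint cocycle} is $A$ itself, since $A^\ast(x)^\ast = (A(T^{-1}x)^\ast)^\ast = A(T^{-1}x)$) yields
$$A(x)\,v^\ast(x) \;=\; e(x)\,v^\ast(Tx),\qquad |e(x)|=\|A(x)\,v^\ast(x)\|.$$
Both $d$ and $e$ are nonzero $\mu$-a.e., by the argument of Lemma~\ref{A(x)*v(T x) not 0}. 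Iterating these one-step identities gives the telescoping formulas
\begin{align*}
A^{(n)}(x)\,v^\ast(x) &\;=\; \Bigl(\prod_{k=0}^{n-1} e(T^k x)\Bigr)\, v^\ast(T^n x),\\
(A^\ast)^{(n)}(x)\,v(x) &\;=\; \Bigl(\prod_{k=0}^{n-1} d(T^{-k}x)\Bigr)\, v(T^{-n}x),
\end{align*}
so, setting $\phi := \log|e| = \log\|A(\cdot)\,v^\ast(\cdot)\|$ and $\psi := \log|d| = \log\|A(T^{-1}\cdot)^\ast v(\cdot)\|$, taking logarithms identifies
$$\tfrac{1}{n}\log\|A^{(n)}(x)\, v^\ast(x)\| = \tfrac{1}{n}\sum_{k=0}^{n-1}\phi(T^k x),\qquad \tfrac{1}{n}\log\|(A^\ast)^{(n)}(x)\, v(x)\| = \tfrac{1}{n}\sum_{k=0}^{n-1}\psi(T^{-k}x).$$

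Next I would show that both left-hand sides converge $\mu$-a.e.\ to $L_1(A)$. The upper bounds $\limsup \leq L_1(A)$ (using $L_1(A^\ast)=L_1(A)$) follow at once from Proposition~\ref{prop  L1 = lim log An}. For the matching lower bounds one invokes the singular-value inequality $\|g\,w\| \geq \alpha(\hat w,\mostexp(g))\,\|g\|$ (item (a) of Proposition 2.13 in~\cite{LEbook-chap2}, already used in the proof of Lemma~\ref{A(x)*v(T x) not 0}) with $g = A^{(n)}(x)$, $w = v^\ast(x)$ for the first sequence and $g = (A^\ast)^{(n)}(x)$, $w = v(x)$ for the second. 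Since $\mostexp^{(n)}(A)(x)\to v(x)$ and $\mostexp^{(n)}(A^\ast)(x)\to v^\ast(x)$ by Proposition~\ref{med:aec}, while $\alpha(\hat v(x),\hat v^\ast(x))>0$ $\mu$-a.e.\ by Lemma~\ref{lemma  alpha v* v >0}, the angle factor converges to a strictly positive limit, so its logarithm is bounded and its contribution to $\tfrac1n\log\|\cdot\|$ vanishes in the limit; this yields $\liminf \geq L_1(A)$.

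Finally one needs integrability and evaluation of the integrals. The positive parts satisfy $\phi^+\leq \log^+\|A\|$ and $\psi^+\leq \log^+\|A\circ T^{-1}\|$, both in $L^1$ by the $\mu$-integrability of $A$ and the $T$-invariance of $\mu$. If, for instance, $\int\phi^-\,d\mu = +\infty$, applying Birkhoff's theorem to the truncations $\min(\phi^-,N)$ and letting $N\to\infty$ via monotone convergence would force $\tfrac{1}{n}\sum_{k=0}^{n-1}\phi^-(T^k x)\to +\infty$ $\mu$-a.e., which combined with the finite Birkhoff limit of $\phi^+$ yields $\tfrac{1}{n}\sum_{k=0}^{n-1}\phi(T^k x)\to -\infty$, contradicting the convergence to the finite value $L_1(A)$ (finite because $L_1(A)>L_2(A)\geq -\infty$). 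Hence $\phi\in L^1$, and Birkhoff gives $\int\phi\,d\mu = L_1(A)$; the analogous argument for the ergodic transformation $T^{-1}$ settles $\psi$. The step I expect to require the most care is this last integrability check: because only the Birkhoff averages (not the individual summands) are controlled, $L^1$-membership of $\phi$ and $\psi$ cannot be verified pointwise and must instead be forced by contradicting the finiteness of the limit.
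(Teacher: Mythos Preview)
Your proof is correct and follows essentially the same route as the paper: invariance of the top directions (Proposition~\ref{ medir k invariance}) gives the telescoping into a Birkhoff sum, the singular-value angle inequality together with Lemma~\ref{lemma  alpha v* v >0} pins the limit at $L_1(A)$, and integrability is forced by contradiction with that finite limit. The only cosmetic difference is that the paper packages the integrability step via the auxiliary nonnegative function $h=\log^+\norm{A}-\log\norm{A\,v^\ast}$ and invokes Lemma~\ref{lemma non-integrable Birkhoff}, whereas you split $\phi=\phi^+-\phi^-$ and run the truncation argument on $\phi^-$ directly; these are the same idea.
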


\begin{proof}
Because the  cocycles $A$ and $A^\ast$ play symmetric roles, it is enough proving the $\mu$-integrability of the function $\log \norm{ A  \,v^\ast  }$.

Applying proposition~\ref{ medir k invariance} to   $A^\ast$, we see that $A(x)\,v^\ast(x)=\pm v^\ast( T x)$. From this invariance relation, we get
for $\mu$-a.e. $x\in X$
$$ \log \norm{\An{n}(x)\,v^\ast(x)} =
\sum_{j=0}^{n-1} \log \norm{A(T^j x)\,v^\ast(T^j x)} \;. $$

Let $v_n:X\to \R^m$ be a unit measurable section 
 of $\medir{n}(A)$. For notational simplicity we will also
 write $\hatv^\ast(x):=\medir{\infty}(A^\ast)(x)$ and
  $\hatv_n(x):=\medir{n}(A)(x)$.
By item (a) of 
Proposition 2.13 in~\cite{LEbook-chap2},
$$\norm{\An{n}(x) v^\ast(x)} \geq \aangle\left( \hatv^\ast (x), \, \hatv_n(x) \right)\, \norm{\An{n}(x) }\;, $$
and hence
\begin{align*}
& \frac{1}{n}\,\log \norm{\An{n}(x)} +
\frac{1}{n}\,\log \aangle\left( \hatv^\ast (x), \, \hatv_n(x) \right) \\
& \qquad  \leq \frac{1}{n}\,\log \norm{\An{n}(x) v^\ast(x)}
 \leq \frac{1}{n}\,\log \norm{\An{n}(x)} \;.
\end{align*}

By proposition~\ref{prop  L1 = lim log An},
$\frac{1}{n}\,\log \norm{\An{n}(x)}$ converges to $L_1(A)$ almost surely. By lemma~\ref{lemma  alpha v* v >0},
$\aangle\left( \hatv^\ast (x), \, \hatv(x) \right)>0$,
and hence
$\frac{1}{n}\,\log \aangle ( \hatv^\ast (x), \, \hatv_n(x) )$ converges to zero.

Thus, for $\mu$-almost every $x\in X$,
$$ \lim_{n\to \infty} \frac{1}{n}\,
\sum_{j=0}^{n-1} \log \norm{A(T^j x)\,v^\ast(T^j x)} = 
\lim_{n\to \infty} \frac{1}{n}\,\log \norm{\An{n}(x) v^\ast(x)} = L_1(A)\;.$$

The function $\log \norm{A(x) v^\ast(x)}$ is bounded from above by the $\mu$-integrable function $\log^+ \norm{A(x)}$.
Hence, $h(x):= \log^+ \norm{A(x)} - \log \norm{A(x) v^\ast(x)}$
is a non-negative measurable function whose Birkhoff averages converge  $\mu$-almost everywhere to
$\int \log^+ \norm{A}\,d\mu - L_1(A)$.
By lemma~\ref{lemma non-integrable Birkhoff} it follows that
$h\in L^1(X,\mu)$, which implies that
$\log\norm{A\,v^\ast}\in L^1(X,\mu)$.

Thus, by Birkhoff's theorem,\,
$\int_X \log \norm{ A(x)\,v^\ast }\, d\mu = L_1(A)$.

\end{proof}

\begin{proposition}\label{prop 1/n log aaangle = 0}
Assume $L_1(A)>L_{2}(A)$. Then for $\mu$-a.e. $x\in X$,
\begin{enumerate}
\item[(a)]\quad $\displaystyle  \lim_{n\to+\infty} \frac{1}{n}\,\log \aangle\left( \medir{\infty}(A^\ast)(T^n x),\, 
 \medir{\infty}(A)(T^n x)
\right) =0$. 

\item[(b)]\quad $\displaystyle \limsup_{n\to+\infty} \frac{1}{n}\log \aangle \left( 
\An{n}(x) \medir{\infty} (A)(x), \medir{\infty} (A)(T^n x)\right)=0$.
\end{enumerate}

\end{proposition}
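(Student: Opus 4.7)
The plan is to derive both parts from a single cocycle (cohomological) identity. Let $v, v^\ast : X \to \R^m$ be unit measurable sections of $\medir{\infty}(A)$ and $\medir{\infty}(A^\ast)$, and set $c(x) := \norm{A(x)^\ast v(T x)}$ and $c^\ast(x) := \norm{A(x)\, v^\ast(x)}$. Applying Proposition~\ref{ medir k invariance} to $A$ and (with base dynamics $T^{-1}$) to $A^\ast$, one obtains the projective invariance identities $A(x)^\ast v(T x) = \tilde c(x)\, v(x)$ and $A(x)\, v^\ast(x) = \tilde c^\ast(x)\, v^\ast(T x)$, for measurable scalars with $|\tilde c(x)| = c(x)$ and $|\tilde c^\ast(x)| = c^\ast(x)$; both are nonzero $\mu$-a.e.\ by Lemma~\ref{A(x)*v(T x) not 0} and its analog for $A^\ast$.

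For (a), write $\alpha(y) := \aangle(\medir{\infty}(A^\ast)(y), \medir{\infty}(A)(y)) = |\langle v^\ast(y), v(y)\rangle|$ and compute
\[ c^\ast(x)\, \alpha(T x) = \bigl|\langle A(x)\, v^\ast(x),\, v(T x)\rangle\bigr| = \bigl|\langle v^\ast(x),\, A(x)^\ast v(T x)\rangle\bigr| = c(x)\, \alpha(x), \]
which yields the cohomological identity $\alpha(T x) = \tfrac{c(x)}{c^\ast(x)}\, \alpha(x)$. Iterating and taking logarithms,
\[ \frac{1}{n} \log \alpha(T^n x) = \frac{1}{n} \log \alpha(x) + \frac{1}{n} \sum_{j=0}^{n-1} \bigl[\log c(T^j x) - \log c^\ast(T^j x)\bigr]. \]
By Proposition~\ref{prop  lim 1/n log ||An v*|| = L1}, $\log c, \log c^\ast \in L^1(\mu)$ with $\int \log c\, d\mu = \int \log c^\ast\, d\mu = L_1(A)$, so Birkhoff's theorem forces the average on the right to converge to $0$. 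Since Lemma~\ref{lemma  alpha v* v >0} guarantees $\alpha(x)>0$ $\mu$-a.e., the initial term $\tfrac{1}{n}\log\alpha(x)$ also vanishes in the limit, yielding (a).

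For (b), start from
\[ \aangle\bigl(\An{n}(x)\, v(x),\, v(T^n x)\bigr) = \frac{\bigl|\langle v(x),\, \An{n}(x)^\ast v(T^n x)\rangle\bigr|}{\norm{\An{n}(x)\, v(x)}}. \]
Iterating the invariance gives $\An{n}(x)^\ast v(T^n x) = \bigl(\prod_{j=0}^{n-1} \tilde c(T^j x)\bigr) v(x)$, so the numerator equals $\prod_{j=0}^{n-1} c(T^j x)$, whose $\tfrac{1}{n}\log$ tends to $L_1(A)$ by Birkhoff. For the denominator I use the sandwich $\prod_{j=0}^{n-1} c(T^j x) \le \norm{\An{n}(x)\, v(x)} \le \norm{\An{n}(x)}$; Proposition~\ref{prop  L1 = lim log An} pushes the upper end to $L_1(A)$ and the Birkhoff bound pushes the lower end to $L_1(A)$, so $\tfrac{1}{n}\log \norm{\An{n}(x)\, v(x)} \to L_1(A)$. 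Therefore the $\tfrac{1}{n}\log$ of the ratio tends to $L_1(A) - L_1(A) = 0$, proving (b).

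The main conceptual step is the cohomological identity in (a); the scalars $\tilde c, \tilde c^\ast$ inherit sign ambiguities from the choice of unit sections, but taking absolute values cleanly removes them, and Lemma~\ref{A(x)*v(T x) not 0} guarantees they do not vanish. Once the identity is in place, Birkhoff's theorem together with the crucial $L^1$-integrability of $\log c$ and $\log c^\ast$ (and the equality of their integrals to $L_1(A)$) furnished by Proposition~\ref{prop  lim 1/n log ||An v*|| = L1} does the rest.
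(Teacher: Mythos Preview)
Your argument for (a) is essentially the paper's: the paper writes $f(x)=\log\aangle(\hatv^\ast(x),\hatv(x))$, computes $f(x)-f(Tx)=\log\norm{A(x)v^\ast(x)}-\log\norm{A(x)^\ast v(Tx)}$, and invokes Lemma~\ref{lemma 1/n f(Tn x)}, which is exactly your cohomological identity followed by Birkhoff.

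For (b), however, you take a genuinely different and cleaner route. The paper argues geometrically: it lower-bounds $\aangle(\An{n}(x)\hatv(x),\hatv(T^nx))$ by $\aangle(\hatv^\ast(T^nx),\hatv(T^nx))$ minus two error terms involving $\delta(\hatv^\ast(T^nx),\hatv_n^\ast(T^nx))$ and $\delta(\An{n}(x)\hatv_n(x),\An{n}(x)\hatv(x))$, controls the first error via the contraction from a large gap ratio, but can only control the second error along a \emph{subsequence} of times (using Egorov on the convergence $\hatv_n^\ast\to\hatv^\ast$), and this is why it obtains only a $\limsup$. Your approach instead exploits the exact invariance $\An{n}(x)^\ast v(T^nx)=\bigl(\prod_j \tilde c(T^jx)\bigr)v(x)$ to identify the numerator $\abs{\langle \An{n}(x)v(x),v(T^nx)\rangle}$ with $\prod_j c(T^jx)$, and then the Cauchy--Schwarz inequality $\prod_j c(T^jx)\le\norm{\An{n}(x)v(x)}$ together with the trivial upper bound $\norm{\An{n}(x)}$ sandwiches the denominator. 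This yields the full limit (not merely $\limsup$) and avoids the finite-scale machinery entirely. The paper's approach has the advantage of being closer in spirit to the finite-scale estimates needed later for the quantitative continuity results, but your argument is both shorter and stronger for the statement at hand.
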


\begin{proof}
Take unit measurable sections $v,v^\ast:X\to \Proj$
of $\medir{\infty}(A)$ and $\medir{\infty}(A^\ast)$, respectively,
and as before let us write $\hatv(x):= \medir{\infty}(A)(x)$
and $\hatv^\ast(x):= \medir{\infty}(A^\ast)(x)$.

Consider the function $f(x):= \log \aangle ( \hatv^\ast(x),\, 
 \hatv(x) )$. By lemma~\ref{lemma 1/n f(Tn x)}, for (a) it is enough to prove that $f-f\circ T\in L^1(\mu)$.

By proposition~\ref{ medir k invariance} we have
\begin{align*}
f(x)-f(T x) &= \log \frac{ \aangle(\hatv^\ast(x),\, 
 \hatv(x) ) }{ \aangle(\hatv^\ast(T x),\, 
 \hatv(T x)) } = \log \frac{ \aangle(\hatv^\ast(x),\, 
 A(x)^\ast \hatv(T x) ) }{ \aangle(A(x) \,\hatv^\ast(x),\, 
 \hatv(T x)) } \\
 &= \log \frac{ \langle v^\ast(x),\, 
 A(x)^\ast v(T x) \rangle }{ \norm{A(x)^\ast v(T x)} }\,
 \frac{ \norm{A(x) v^\ast(x)} }{ \langle(A(x)  v^\ast(x),\, 
 v(T x)\rangle }  \\
  &= \log  \norm{A(x) v^\ast(x)} - \log \norm{A(x)^\ast v(T x)} \;. 
\end{align*}
By proposition~\ref{prop  lim 1/n log ||An v*|| = L1},
$\log  \norm{A\, v^\ast}\in L^1(X,\mu)$, and
$\log  \norm{A^\ast\, (v\circ T)}\in L^1(X,\mu)$.
Hence by lemma~\ref{lemma 1/n f(Tn x)}  this implies (a).

As before, we use the notation $\hatv_n$ and $\hatv_n^\ast$
for the sub-bundles $\medir{n}(A)$ and $\medir{n}(A^\ast)$, respectively. Since $\An{n}(x)\,\hatv_n(x)= \hatv_n^\ast(T^n x)$,
by Proposition 3.13 in \cite{LEbook-chap2} we have
\begin{align*}
\aangle( \An{n}(x)\,\hatv(x), \hatv(T^n x)) &\geq
\aangle(  \hatv^\ast(T^n x), \hatv(T^n x)) -
\delta( \hatv^\ast(T^n x), \hatv_n^\ast(T^n x) ) \\
& \qquad \qquad - 
\delta( \An{n}(x)\,\hatv_n(x), \An{n}(x)\,\hatv(x) ) \;.
\end{align*}

Now take $0<\varkappa <L_1(A)-L_2(A)$ and $0<\epsilon\ll \varkappa$ arbitrary small. By item (a), for all large enough $n$\, 
$\aangle(  \hatv^\ast(T^n x), \hatv(T^n x))\geq e^{-n\epsilon}$.
Because as $n$ grows, $\An{n}(x)$ has a large gap ratio,
it acts as  a strong contraction in a neighborhood of $\hatv_n^\ast(x)$. Hence by proposition~\ref{med:aec},
$$ \delta( \An{n}(x)\,\hatv_n(x), \An{n}(x)\,\hatv(x) )\ll
\delta(  \hatv_n(x),  \hatv(x) )\leq e^{-n\,(\varkappa-\epsilon)}\;. $$
We can not guarantee that the second distance 
$\delta( \hatv^\ast(T^n x), \hatv_n^\ast(T^n x) )$ converges to $0$ $\mu$-almost everywhere, but since $\hatv_n^\ast$ converges almost surely to $\hatv^\ast$, with the speed provided by proposition~\ref{med:aec}, for $\mu$-a.e. $x\in X$ there is a sequence of times $\{n_i\}_i$ such that
$$ \delta( \hatv^\ast(T^{n_i} x), \hatv_{n_i}^\ast(T^{n_i} x) )
\leq e^{-{n_i}\,(\varkappa-\epsilon)} \quad\forall\, i \;.$$
Thus, taking logarithms and dividing by $n$, (b) follows.

\end{proof}

\begin{proposition} \label{prop lambda wedge <= }
Given $x\in X$ and unit vectors  $v_k\in \wedge_k E(x)$ and
$v_r\in \wedge_r E(x)$,
\begin{align*}
 \lambda_{\wedge_{k+r} A}(x, v_k\wedge v_r) &\leq 
\lambda_{\wedge_{k} A}(x, v_k)  +
\lambda_{\wedge_{r} A}(x, v_r)  \;,\\
 \lambda_{\wedge_{k+r} A}^-(x, v_k\wedge v_r) &\leq 
\lambda_{\wedge_{k} A}^-(x, v_k)  +
\lambda_{\wedge_{r} A}^-(x, v_r)  \;.
\end{align*}

\end{proposition}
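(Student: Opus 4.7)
The plan is to reduce both inequalities to one pointwise norm estimate, valid for every iterate $n$, and then pass to $\limsup$ or $\liminf$.

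The algebraic ingredient is the multiplicativity of the exterior-power functor with respect to the wedge product: for any $g\in\Mat(m,\R)$ and any multivectors $u\in\wedge_k\R^m$, $w\in\wedge_r\R^m$,
\[
(\wedge_{k+r} g)(u\wedge w) \;=\; \bigl((\wedge_k g)\,u\bigr)\,\wedge\,\bigl((\wedge_r g)\,w\bigr).
\]
Combined with the standard Hadamard/Gram inequality $\|a\wedge b\|\leq \|a\|\,\|b\|$ on the exterior algebra, applying this to $g=\An{n}(x)$ yields the pointwise bound
\[
\bigl\|\wedge_{k+r}\An{n}(x)\,(v_k\wedge v_r)\bigr\| \;\leq\; \bigl\|\wedge_k\An{n}(x)\,v_k\bigr\|\,\bigl\|\wedge_r\An{n}(x)\,v_r\bigr\|.
\]

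Taking $\tfrac1n\log$ on both sides gives $c_n\leq a_n+b_n$, where $a_n$, $b_n$, $c_n$ denote the three scaled logarithms appearing on the right and left of the claimed inequalities. The first (limsup) inequality follows immediately from the subadditivity $\limsup(a_n+b_n)\leq \limsup a_n+\limsup b_n$. For the $\liminf$ version, I would extract a subsequence $n_j\to\infty$ realizing $\lambda^-_{\wedge_{k+r}A}(x,v_k\wedge v_r)$, and after passing to a further subsequence assume that both $a_{n_j}$ and $b_{n_j}$ converge in $[-\infty,+\infty]$; the pointwise bound then becomes $\lambda^-_{\wedge_{k+r}A}(x,v_k\wedge v_r)\leq \lim a_{n_j}+\lim b_{n_j}$, after which one identifies each limit with the relevant $\lambda^-$.

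The main obstacle, as I see it, is this last identification in the liminf case, because in general $\liminf(a_n+b_n)$ is only bounded below, not above, by $\liminf a_n+\liminf b_n$. One must therefore use more than the raw pointwise estimate: the argument will have to exploit structural features of the sequences $\tfrac1n\log\|\wedge_k\An{n}(x)v_k\|$ — for example, the near-constancy of the maximal growth rate $\tfrac1n\log\|\wedge_k\An{n}(x)\|$ (which does have a true limit $L_1(\wedge_k A)$ a.e. by Proposition~\ref{prop  L1 = lim log An}), and the freedom to choose subsequences realizing each individual $\liminf$ in a compatible way. This coordination of subsequences is the step where I would spend the most care, and it is the only place where the argument goes beyond the formal manipulation sketched above.
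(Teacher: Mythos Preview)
Your approach coincides with the paper's: both rest on the single pointwise inequality
\[
\log\norm{\wedge_{k+r}\An{n}(x)(v_k\wedge v_r)}\;\leq\;\log\norm{\wedge_k\An{n}(x)\,v_k}+\log\norm{\wedge_r\An{n}(x)\,v_r},
\]
which the paper cites as item~(a) of Proposition~3.12 in~\cite{LEbook-chap2}, and the $\limsup$ inequality then follows immediately from subadditivity of $\limsup$.

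Your misgivings about the $\liminf$ case are entirely justified, and the paper's proof is no more informative: it simply says ``dividing by $n$ and passing to the limit, the inequalities follow.'' From $c_n\leq a_n+b_n$ one only gets $\liminf c_n\leq\liminf(a_n+b_n)$, and in general $\liminf(a_n+b_n)\geq\liminf a_n+\liminf b_n$, with the reverse failing (e.g.\ $a_n=0,2,0,2,\ldots$ and $b_n=2,0,2,0,\ldots$). Your subsequence extraction does not close this gap either: a subsequence realizing $\liminf c_n$ need not realize $\liminf a_n$ or $\liminf b_n$, and the ``coordination'' you allude to cannot be forced from the pointwise bound alone.

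The practical resolution is that the paper never invokes the $\liminf$ version in this generality. Its sole use is inside Proposition~\ref{prop lambda wedge >= }, where the hypothesis $v_k\in\wedge_k[\medir{\infty}_k(A)(x)]$ together with Proposition~\ref{prop:lim mostexp infty = Lk} guarantees that $a_n=\frac{1}{n}\log\norm{\wedge_k\An{n}(x)\,v_k}$ has an honest limit $\lambda_{\wedge_k A}(x,v_k)=\lambda_{\wedge_k A}^-(x,v_k)$. Under that extra hypothesis one has $\liminf(a_n+b_n)=\lim a_n+\liminf b_n=\liminf a_n+\liminf b_n$, and the desired inequality does follow from the pointwise bound. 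So the correct statement provable by the indicated method is the $\liminf$ inequality under the additional assumption that one of the two Lyapunov limits exists; as written, the second displayed inequality is overstated and its proof incomplete---a gap you spotted and the paper did not.
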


\begin{proof}
By item (a) of Proposition 3.12 in~\cite{LEbook-chap2},
$$ \log \norm{ \wedge_{k+r} \An{n}(x) v_k\wedge v_r}
\leq \log \norm{ \wedge_{k} \An{n}(x) v_k}
+ \log \norm{ \wedge_{r} \An{n}(x) v_r} \;. $$
Hence, dividing by $n$ an passing to the limit,
the inequalities follow.
\end{proof}

\begin{proposition} \label{prop lambda wedge >= }
Assume $L_k(A)>L_{k+1}(A)$.
Given unit vectors $v_k\in \wedge_k \left[ \medir{\infty}_k(A)(x)\right]$
and $v_r\in \wedge_r \left[ \medir{\infty}_k(A)(x)^\perp\right]$,
$$
\lambda_{\wedge_{k+r} A}^-(x, v_k\wedge v_r) =
\lambda_{\wedge_{k} A}(x, v_k)  +
\lambda_{\wedge_{r} A}^-(x, v_r) \;.$$

Moreover, if $ \lambda_{\wedge_{r} A}^-(x, v_r) =
\lambda_{\wedge_{r} A}(x, v_r)$ then 
$ \lambda_{\wedge_{k+r} A}^-(x, v_k\wedge v_r) =
\lambda_{\wedge_{k+r} A}(x, v_k\wedge v_r)$.
\end{proposition}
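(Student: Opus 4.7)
The upper bound $\lambda^-_{\wedge_{k+r} A}(x, v_k \wedge v_r) \le \lambda_{\wedge_k A}(x, v_k) + \lambda^-_{\wedge_r A}(x, v_r)$ comes essentially for free from Proposition~\ref{prop lambda wedge <= }: that proposition already gives $\lambda^-_{\wedge_{k+r}A}(x, v_k\wedge v_r) \le \lambda^-_{\wedge_k A}(x, v_k) + \lambda^-_{\wedge_r A}(x, v_r)$, and the hypothesis $L_k(A) > L_{k+1}(A)$ translates into $L_1(\wedge_k A) > L_2(\wedge_k A)$. Since $v_k$ is a unit vector in the one-dimensional space $\wedge_k \mostexp^{(\infty)}_k(A)(x) = \mostexp^{(\infty)}(\wedge_k A)(x)$, Proposition~\ref{prop:lim mostexp infty = Lk} applied to the exterior power cocycle $\wedge_k A$ forces $\lambda^-_{\wedge_k A}(x, v_k) = \lambda_{\wedge_k A}(x, v_k) = L_1(A) + \cdots + L_k(A)$. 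The real content of the proposition is therefore the matching lower bound.

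My plan for the lower bound is to exploit the multiplicative identity
\[\wedge_{k+r} \An{n}(x) (v_k \wedge v_r) = \bigl(\wedge_k \An{n}(x)\, v_k\bigr) \wedge \bigl(\wedge_r \An{n}(x)\, v_r\bigr)\]
and show that the right-hand side has norm comparable, up to a factor $e^{-o(n)}$, to the product of the norms of its two factors. Set $U := \mostexp^{(\infty)}_k(A)$. Forward invariance of $U^\perp$ given by Proposition~\ref{ medir k invariance}(b) places $\wedge_r \An{n}(x)\, v_r$ inside $\wedge_r U(T^n x)^\perp$, while $\wedge_k \An{n}(x)\, v_k$ spans the line $\wedge_k [\An{n}(x) U(x)]$. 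Proposition~\ref{prop 1/n log aaangle = 0}(b), applied to the exterior power $\wedge_k A$ (whose top direction is $\wedge_k U$ by Proposition~\ref{prop: medir k exists}), then tells me that the $k$-plane $\An{n}(x) U(x)$ approaches $U(T^n x)$ at a subexponential rate, and since $U(T^n x) \perp U(T^n x)^\perp$, this yields the transversality bound $\aangle(\An{n}(x) U(x),\, U(T^n x)^\perp) \ge e^{-o(n)}$.

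Feeding this into the Grassmann identity that controls $\norm{u \wedge w}$ in terms of $\norm{u}$, $\norm{w}$, and the principal angles between the containing subspaces (Chapter~2 of~\cite{LEbook}), I obtain
\[\norm{\wedge_{k+r}\An{n}(x)(v_k \wedge v_r)} \ge e^{-o(n)}\, \norm{\wedge_k \An{n}(x)\, v_k}\, \norm{\wedge_r \An{n}(x)\, v_r}.\]
Taking $\frac{1}{n}\log$ and passing to the liminf, the existence of a genuine limit for the first factor (from the opening paragraph) lets me distribute the liminf across the sum, giving $\lambda^-_{\wedge_{k+r}A}(x, v_k \wedge v_r) \ge \lambda_{\wedge_k A}(x, v_k) + \lambda^-_{\wedge_r A}(x, v_r)$ and matching the upper bound. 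For the \emph{moreover} statement, the same inequality passed to the limsup together with the upper bound $\lambda_{\wedge_{k+r} A}(x, v_k \wedge v_r) \le \lambda_{\wedge_k A}(x, v_k) + \lambda_{\wedge_r A}(x, v_r)$ from Proposition~\ref{prop lambda wedge <= } forces $\liminf = \limsup$ on $v_k \wedge v_r$ whenever it already holds on $v_r$.

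The main obstacle is the quantitative transversality step: producing the subexponential lower bound on $\aangle(\An{n}(x) U(x),\, U(T^n x)^\perp)$ and then converting it into the claimed lower bound on the norm of the wedge, uniformly in the choice of $v_r$. This is where Proposition~\ref{prop 1/n log aaangle = 0}(b) and the Grassmann geometry of Chapter~2 of~\cite{LEbook} have to be combined carefully; everywhere else the inequalities are sharp and the manipulation is routine.
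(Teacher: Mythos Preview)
Your proposal is correct and follows essentially the same route as the paper: reduce to the lower bound via Proposition~\ref{prop lambda wedge <= } and the fact that $\lambda_{\wedge_k A}^-(x,v_k)=\lambda_{\wedge_k A}(x,v_k)$, then use the invariance of $\medir{\infty}_k(A)^\perp$ together with the Grassmann inequality (Proposition~3.12(b) in~\cite{LEbook-chap2}) and the subexponential angle control from Proposition~\ref{prop 1/n log aaangle = 0}(b). The only cosmetic difference is that the paper measures the angle $\aangle_k\bigl(\An{n}(x)\,\medir{\infty}_k(A)(x),\,\medir{\infty}_k(A)(T^n x)\bigr)$ directly, whereas you phrase it as transversality of $\An{n}(x)U(x)$ with $U(T^n x)^\perp$; these encode the same information and feed into the same Grassmann estimate.
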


\begin{proof}
Because  $\wedge_k [\medir{\infty}_k(A)]$ has dimension one,  $v_k$ is a sub-limit of most expanding vectors for $\wedge_{k} \An{n}(x)$. Hence, by proposition~\ref{prop:lim mostexp infty = Lk}  we have
$\lambda_{\wedge_k A}^-(x,v_k) = \lambda_{\wedge_k A}(x,v_k)$.

In view of proposition~\ref{prop lambda wedge <= }, it is enough  to prove  that
$$\lambda_{\wedge_{k} A}(x, v_k)  +
\lambda_{\wedge_{r} A}^-(x, v_r)\leq
\lambda_{\wedge_{k+r} A}^-(x, v_k\wedge v_r) \;.$$

By proposition~\ref{ medir k invariance} we have $\wedge_r  \An{n}(x)\,v_r\in \wedge_r [ \medir{\infty}_k(A)(x)^\perp]$.
Hence by item (b) of Proposition 3.12 in~\cite{LEbook-chap2},
$$  \norm{ \wedge_k \An{n}(x) v_k }\, \norm{  \wedge_{r} \An{n}(x) \, v_r   }  \leq \alpha_n(x)^{-1} \,
\norm{  \wedge_{k+r} \An{n}(x) \, (v_k \wedge v_{r})   }\;,  $$
where
\begin{align*}
\alpha_n(x) & := \aangle_k\left( 
\An{n}(x) \medir{\infty}_k(A)(x), \medir{\infty}_k(A)(T^n x)\right)\\
&= \aangle\left( 
\wedge_k \An{n}(x) \medir{\infty}(\wedge_k A)(x), \medir{\infty}(\wedge_k A)(T^n x)\right) \;.
\end{align*}

Therefore, by  proposition~\ref{prop 1/n log aaangle = 0} (b),
\begin{align*}
& \lambda_{\wedge_{k} A}(x, v_k)  +
\lambda_{\wedge_{r} A}^-(x, v_r) = \\
&= \liminf_{n\to +\infty}\frac{1}{n}\,
\log \norm{ \wedge_k \An{n}(x) v_k }\, \norm{  \wedge_{r} \An{n}(x) \, v_{r}  }\\
&\leq  \liminf_{n\to +\infty}\frac{1}{n}\,
\log  \norm{  \wedge_{k+r} \An{n}(x) \, (v_k \wedge v_r)   } + \liminf_{n\to +\infty}\frac{1}{n}\,\log \alpha_n(x)^{-1}\\
&\leq   \lambda_{\wedge_{k+r} A}^-(x, v_k\wedge v_r)  - \limsup_{n\to +\infty}\frac{1}{n}\,\log \alpha_n(x)  = \lambda_{\wedge_{k+r} A}^-(x, v_k\wedge v_r)\; .
\end{align*}

Assume now that $\lambda_{\wedge_{r} A}^-(x, v_r)=\lambda_{\wedge_{r} A}(x, v_r)$.
Combining proposition~\ref{prop lambda wedge <= } with the previous inequality
\begin{align*}
\lambda_{\wedge_{k+r} A}(x, v_k\wedge v_r) &\leq
\lambda_{\wedge_{k} A}(x, v_k)  +  \lambda_{\wedge_{r} A}(x,v_r)\\
   &\leq  \lambda_{\wedge_{k} A} (x, v_k) + 
\lambda_{\wedge_{r} A}^- (x, v_r) \\
&\leq
\lambda_{\wedge_{k+r} A}^-(x, v_k\wedge v_r) \;, 
\end{align*}
which implies that $\lambda_{\wedge_{k+r} A}^-(x, v_k\wedge v_r)=\lambda_{\wedge_{k+r} A}(x, v_k\wedge v_r)$.

\end{proof}

\begin{definition}
Given a $\mu$-regular point $x\in X$ of a cocycle $A$, we call {\em limit singular basis} of the fiber $E(x)$ to any orthonormal basis $\{u_1,\ldots, u_m\}$ of $E(x)$ obtained as a sub-limit of a sequence of singular basis $\{u_{1,n},\ldots, u_{m,n}\}$ of $\An{n}(x)$.
\end{definition}

\begin{lemma}
\label{lemma limit sing basis}
Let $\{u_1,\ldots, u_m\}$ be a limit singular basis of $E(x)$
at some $\mu$-regular point $x\in X$. Then for all $i=1,\ldots, m$,
$$  \lambda_{\wedge_i A}^-(x, u_1\wedge \ldots \wedge u_i) = \lambda_{\wedge_i A}(x, u_1\wedge \ldots \wedge u_i) = L_1(\wedge_i A)\;. $$
\end{lemma}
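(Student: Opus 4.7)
The plan is to reduce the general statement to the case where $i$ is a gap index of the Lyapunov spectrum of $A$, and handle that case directly via the wedge-power cocycle $\wedge_i A$. Write $S_i := L_1(A)+\cdots+L_i(A) = L_1(\wedge_i A)$. By Proposition~\ref{lambda LE properties}(a) the upper bound $\lambda_{\wedge_i A}(x, u_1\wedge\cdots\wedge u_i)\leq S_i$ is automatic, so the content of the lemma lies in the matching lower bound on $\lambda_{\wedge_i A}^-$.

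The gap case ($L_i(A)>L_{i+1}(A)$) follows from the spectral gap $L_1(\wedge_i A)>L_2(\wedge_i A)$: Proposition~\ref{med:aec} applied to $\wedge_i A$ yields the limit most-expanding direction $\medir{\infty}(\wedge_i A)(x)$. Since $u_{1,n}\wedge\cdots\wedge u_{i,n}$ is a unit most-expanding vector of $\wedge_i\,\An{n}(x)$ and, along the subsequence defining the limit singular basis, converges to $u_1\wedge\cdots\wedge u_i$, the limiting wedge lies in $\medir{\infty}(\wedge_i A)(x)$, and Corollary~\ref{coro:lim mostexp infty = L1} applied to $\wedge_i A$ delivers $\lambda_{\wedge_i A}^-(x, u_1\wedge\cdots\wedge u_i)=\lambda_{\wedge_i A}(x, u_1\wedge\cdots\wedge u_i)=S_i$. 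A useful by-product, valid at every gap index $k$, is the identification $\langle u_1,\ldots,u_k\rangle=\medir{\infty}_k(A)(x)$, since the span on the left corresponds under the Plücker embedding to the simple $k$-vector $u_1\wedge\cdots\wedge u_k$, which has just been shown to lie in $\wedge_k\,\medir{\infty}_k(A)(x)$.

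For a non-gap index $i$, let $k$ be the largest gap index $\leq i$ and $i^*$ the smallest gap index $\geq i$; within the block $(k, i^*]$ the Lyapunov exponents share a common value $\gamma$. Decompose $u_1\wedge\cdots\wedge u_{i^*}=w\wedge v$ with $w:=u_1\wedge\cdots\wedge u_i$ and $v:=u_{i+1}\wedge\cdots\wedge u_{i^*}$. The gap case at $i^*$ combined with Proposition~\ref{prop lambda wedge <= } gives
\[
S_{i^*}=\lambda_{\wedge_{i^*} A}^-(w\wedge v)\leq \lambda_{\wedge_i A}^-(w)+\lambda_{\wedge_{i^*-i} A}^-(v);
\]
the crux is the bound $\lambda_{\wedge_{i^*-i} A}^-(v)\leq (i^*-i)\gamma = S_{i^*}-S_i$. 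For this, note that $u_1\wedge\cdots\wedge u_k\in\wedge_k\,\medir{\infty}_k(A)(x)$ (by the gap case at $k$) and $v\in\wedge_{i^*-i}\,\medir{\infty}_k(A)(x)^\perp$ (by orthonormality of the singular basis). Proposition~\ref{prop lambda wedge >= } applied to $(u_1\wedge\cdots\wedge u_k)\wedge v$ then equates $\lambda_{\wedge_{k+i^*-i} A}^-$ of that product with $S_k+\lambda_{\wedge_{i^*-i} A}^-(v)$; bounding the left side by $L_1(\wedge_{k+i^*-i} A)=S_{k+i^*-i}$ and using that the indices $k+1,\ldots,k+(i^*-i)$ all fall inside the constant-exponent block $(k, i^*]$ yields exactly $\lambda_{\wedge_{i^*-i} A}^-(v)\leq (i^*-i)\gamma$. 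Substituting back forces $\lambda_{\wedge_i A}^-(w)\geq S_i$, which together with the trivial upper bound collapses everything to $\lambda_{\wedge_i A}^-(w)=\lambda_{\wedge_i A}(w)=S_i$. The main subtle point I anticipate is the gap-case identification of $\langle u_1,\ldots, u_k\rangle$ with $\medir{\infty}_k(A)(x)$: the individual limits $u_{j,n}\to u_j$ are not canonically defined inside constant-exponent blocks of $A$, so the argument must proceed via the $k$-wedge (which, as the most-expanding direction of $\wedge_k\,\An{n}(x)$, does converge unambiguously by Proposition~\ref{med:aec} applied to $\wedge_k A$).
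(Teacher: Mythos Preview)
Your argument is correct, but it takes a longer path than the paper's. The paper dispenses with the gap/non-gap dichotomy for $A$ entirely: it works directly with the cocycle $\wedge_i A$, chooses $k$ so that $L_1(\wedge_i A)=\cdots=L_k(\wedge_i A)>L_{k+1}(\wedge_i A)$, observes that $u_{1,n}\wedge\cdots\wedge u_{i,n}$ is always a top singular vector of $\wedge_i\,\An{n}(x)$ and hence lies in $\medir{n}_k(\wedge_i A)(x)$, and then applies Proposition~\ref{prop:lim mostexp infty = Lk} (not just its $k=1$ corollary) to $\wedge_i A$. That single step replaces your entire non-gap case, including the detour through Propositions~\ref{prop lambda wedge <= } and~\ref{prop lambda wedge >= }.

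What your approach buys is that it only invokes convergence of $\medir{n}_j(A)(x)$ at gap indices $j$ of $A$, which is exactly what Definition~\ref{def: regular point} supplies; the paper's shortcut implicitly needs $\medir{n}_k(\wedge_i A)(x)\to\medir{\infty}_k(\wedge_i A)(x)$, a statement about regularity of $x$ for $\wedge_i A$ that is not literally contained in that definition (though it of course holds on a full-measure set). Two small edge cases in your write-up are worth flagging: when there is no gap index below $i$ (your $k=0$), the bound $\lambda^-_{\wedge_{i^*-i}A}(v)\le(i^*-i)\gamma$ follows directly from Proposition~\ref{lambda LE properties}(a) without invoking Proposition~\ref{prop lambda wedge >= }; and when there is no gap index at or above $i$, one should take $i^*=m$ and use that $\wedge_m A$ is one-dimensional.
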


\begin{proof}
Let  $\{u_{1,n},\ldots, u_{m,n}\}$ be a singular basis of $\An{n}(x)$, and $\{u_{1},\ldots, u_{m}\}$ a corresponding limit singular basis for the cocycle $A$. 
Choose $k$ such that
$$ L_1(\wedge_i A) = \ldots = L_{k}(\wedge_i A) 
> L_{k+1}(\wedge_i A)  \;.$$
Since $ u_1\wedge \ldots \wedge u_i$ is a sub-limit of
$ u_{1,n}\wedge \ldots \wedge u_{i,n}$, which is a sequence of vectors in  $\medir{n}_k(\wedge_i A)(x)$, we infer that
$ u_1\wedge \ldots \wedge u_i\in \medir{\infty}_k(\wedge_i A)(x)$. 
The conclusion follows by applying proposition~\ref{prop:lim mostexp infty = Lk}  to the cocycle $\wedge_i A$.
\end{proof}

\begin{proposition}
\label{prop Li (A vk perp) = Li+k(A)}
Consider  a cocycle $A$ such that $L_k(A)>L_{k+1}(A)$. Then 
$$  L_i\left( A\vert_{\mostexp_k^\perp} \right)= L_{i+k}(A) \quad \text{ for any  } \; 1\leq i \leq m-k,\; $$
where $A\vert_{\mostexp_k^\perp}$ stands for the restriction of  $A$ to the invariant  bundle $\medir{\infty}_k(A)^\perp$.
\end{proposition}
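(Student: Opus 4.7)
The plan is to apply Corollary~\ref{L1 wedge = sum Lj} to the restricted cocycle $B:=A\vert_{\mostexp_k^\perp}$ and a telescoping argument to reduce the claim to
$$L_1(\wedge_i B) \;=\; L_{k+1}(A) + \ldots + L_{k+i}(A) \qquad \text{for every } 1\le i\le m-k.$$
Note that $B$ is a genuine cocycle because $\medir{\infty}_k(A)^\perp$ is $(T,A)$-invariant by Proposition~\ref{ medir k invariance}(b). Fix a point $x$ which is $\mu$-regular simultaneously for $A$ and for $B$, and let $\{u_1,\ldots,u_m\}$ be a limit singular basis of $E(x)$ for $A$, extracted as a subsequential limit of singular bases of $\An{n}(x)$. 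Since the first $k$ singular vectors $\{u_{1,n},\ldots,u_{k,n}\}$ span $\medir{n}_k(A)(x)$, which converges to $\medir{\infty}_k(A)(x)$ by Proposition~\ref{prop: medir k exists}, the family $\{u_1,\ldots,u_k\}$ is an orthonormal basis of $\medir{\infty}_k(A)(x)$ and $\{u_{k+1},\ldots,u_m\}$ of its orthogonal complement. Set $v_k:=u_1\wedge\cdots\wedge u_k$, a unit generator of the $1$-dimensional space $\wedge_k\,\medir{\infty}_k(A)(x)$; by Lemma~\ref{lemma limit sing basis}, $\lambda_{\wedge_k A}(x,v_k) = L_1(\wedge_k A) = L_1(A)+\ldots+L_k(A)$.

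For the lower bound, set $v_i:=u_{k+1}\wedge\cdots\wedge u_{k+i}\in \wedge_i\,\medir{\infty}_k(A)(x)^\perp$. Lemma~\ref{lemma limit sing basis} applied to $u_1\wedge\cdots\wedge u_{k+i}$ gives
$\lambda_{\wedge_{k+i} A}^-(x,v_k\wedge v_i) = L_1(A)+\ldots+L_{k+i}(A)$, while Proposition~\ref{prop lambda wedge >= } decomposes
$$\lambda_{\wedge_{k+i} A}^-(x,v_k\wedge v_i) \;=\; \lambda_{\wedge_k A}(x,v_k) + \lambda_{\wedge_i A}^-(x,v_i).$$
Subtracting yields $\lambda_{\wedge_i A}^-(x,v_i) = L_{k+1}(A)+\ldots+L_{k+i}(A)$. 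Since $v_i\in\wedge_i\,\mostexp_k^\perp(x)$ and this subbundle is $A$-invariant, the wedge iterates of $A$ and $B$ act identically on $v_i$, so $\lambda_{\wedge_i B}^-(x,v_i)=\lambda_{\wedge_i A}^-(x,v_i)\le L_1(\wedge_i B)$.

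For the reverse inequality, take a limit singular basis $\{w_1,\ldots,w_{m-k}\}$ of $\mostexp_k^\perp(x)$ for the restricted cocycle $B$, and set $v_i':=w_1\wedge\cdots\wedge w_i$. Lemma~\ref{lemma limit sing basis} applied to $B$ gives $\lambda_{\wedge_i B}^-(x,v_i')=L_1(\wedge_i B)$, and since $v_i'\in\wedge_i\,\mostexp_k^\perp(x)$ this equals $\lambda_{\wedge_i A}^-(x,v_i')$. Applying Proposition~\ref{prop lambda wedge >= } once more with the same $v_k$ and this $v_i'$, combined with item (a) of Proposition~\ref{lambda LE properties} (for the cocycle $\wedge_{k+i}A$), gives
$$L_1(\wedge_k A) + L_1(\wedge_i B) \;=\; \lambda_{\wedge_{k+i}A}^-(x,v_k\wedge v_i') \;\le\; L_1(\wedge_{k+i}A) \;=\; L_1(A)+\ldots+L_{k+i}(A),$$
i.e.\ $L_1(\wedge_i B)\le L_{k+1}(A)+\ldots+L_{k+i}(A)$. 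Combining the two inequalities for every $i$ and telescoping via Corollary~\ref{L1 wedge = sum Lj} applied to $B$ produces $L_i(B)=L_{k+i}(A)$, as desired.

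The main delicate step is the identification of $\{u_1,\ldots,u_k\}$ (and of $\{u_{k+1},\ldots,u_m\}$) as a basis of $\medir{\infty}_k(A)(x)$ (resp.\ of its orthogonal complement): individual singular vectors $u_{j,n}$ need not converge without passing to a subsequence, and one must appeal to the Grassmannian convergence $\medir{n}_k(A)\to\medir{\infty}_k(A)$ from Proposition~\ref{prop: medir k exists} to pair the limit basis with the right invariant subbundles, so that the hypotheses of Propositions~\ref{prop lambda wedge <= } and~\ref{prop lambda wedge >= } are met and the wedge computations close up cleanly.
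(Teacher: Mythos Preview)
Your proof is correct and follows essentially the same strategy as the paper's: reduce to the identity $L_1(\wedge_k A)+L_1(\wedge_i B)=L_1(\wedge_{k+i}A)$, then establish the two inequalities using limit singular bases together with Propositions~\ref{prop lambda wedge <= }, \ref{prop lambda wedge >= } and Lemma~\ref{lemma limit sing basis}. The only minor difference is that for the bound $L_{k+1}(A)+\cdots+L_{k+i}(A)\le L_1(\wedge_i B)$ you invoke Proposition~\ref{prop lambda wedge >= } to compute $\lambda_{\wedge_i A}^-(x,v_i)$ exactly, whereas the paper obtains the same inequality more cheaply from Proposition~\ref{prop lambda wedge <= } (which is just $\|a\wedge b\|\le\|a\|\,\|b\|$); but both routes are valid and the overall architecture is the same.
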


\begin{proof}
It is enough to see that
\begin{equation}\label{L1+L1(wedge A v perp)= L1 wedge}
 L_1(\wedge_k A)+ L_1\left( \wedge_i \, A\vert_{\mostexp_k^\perp} \right) 
= L_1(\wedge_{i+k} A)\;.
\end{equation}
In fact,  from~\eqref{L1+L1(wedge A v perp)= L1 wedge}, using corollary~\ref{L1 wedge = sum Lj}, 
$$
L_1(A)+ \dots + L_k(A) + L_1(A\vert_{\mostexp_k^\perp}) +\cdots +
L_i(A\vert_{\mostexp_k^\perp}) =  L_1(A) + \cdots + L_{i+k}(A)\;. 
$$
Therefore, the conclusion follows subtracting these identities for 
consecutive indexes $i$ and $i-1$.

Let us prove~\eqref{L1+L1(wedge A v perp)= L1 wedge}.
This identity is reduced to two inequalities. We will use propositions~\ref{prop lambda wedge <= } and~\ref{prop lambda wedge >= } 
 to establish each of these inequalities.

Fix a $\mu$-regular point $x\in X$, and consider a 
limit singular basis $\{ u_{1},\ldots, u_{m}\}$  of 
the fiber $E(x)$.
Hence by lemma~\ref{lemma limit sing basis},
\begin{align*}
L_1(\wedge_{i+k} A)  &= \lambda_{\wedge_{i+k} A}(x,u_1\wedge \ldots \wedge u_{k+i})   \\
& \leq \lambda_{\wedge_{k} A}(x, u_1\wedge \ldots \wedge u_{k}) + \lambda_{\wedge_{i} A}(x,u_{k+1}\wedge\ldots \wedge u_{k+i})\\
&\leq  L_1(\wedge_k A)  + 
L_1( \wedge_i \,A\vert_{\mostexp_k^\perp} )\; .
\end{align*}
On the last step we use that
$u_{k+1}\wedge\ldots \wedge u_{k+i}$ is a non zero vector in
the fiber of the bundle $\wedge_i [ \medir{\infty}_k(A)^\perp]$.

For the converse inequality, choose an orthonormal basis  $\{u_1,\ldots, u_k\}$ of $\medir{\infty}_k(A)(x)$ and extend it
with a limit singular basis $\{u_{k+1},\ldots, u_{m}\}$
for the cocycle  $A\vert_{\mostexp_k^\perp}$. 
By  lemma~\ref{lemma limit sing basis} applied to the cocycle  $A\vert_{\mostexp_k^\perp}$ we get
$$ \lambda_{\wedge_i A}^-(x, u_{k+1}\wedge \ldots \wedge u_{m}) = \lambda_{\wedge_i A} (x, u_{k+1}\wedge \ldots \wedge u_{m}) = L_1(\wedge_i A\vert_{\mostexp_k^\perp})\;. $$
Hence, by propositions~\ref{prop lambda wedge >= } 
and~\ref{prop:lim mostexp infty = Lk},
\begin{align*}
L_1(\wedge_{i+k} A) &\geq
 \lambda_{\wedge_{k+i} A}(x, u_1\wedge \ldots \wedge u_k \wedge u_{k+1}\wedge \ldots \wedge u_{k+i})\\
&=\lambda_{\wedge_k A}(x, u_1\wedge \ldots \wedge u_k ) +  \lambda_{\wedge_i \,A }(x, u_{k+1}\wedge \ldots \wedge u_{k+i})  \\
&= L_1(\wedge_k A) +   L_1( \wedge_i \,A\vert_{\mostexp_k^\perp} ) \;. 
\end{align*}
Together these two inequalities conclude the proof.
\end{proof}

\begin{proposition}
\label{prop medir (A vk perp) = medir(A)}
Consider integers $1\leq k <k+r \leq m$ such that 
$$L_{k}(A)>L_{k+1}(A) = \ldots = L_{k+r}(A)>L_{k+r+1}(A)\;. $$
Then for $\mu$-almost every $x\in X$,
$$ \medir{\infty}_{r}(A\vert_{ \mostexp_k^\perp})(x)
= \medir{\infty}_k(A)(x)^\perp\cap 
\medir{\infty}_{k+r}(A)(x) \;.$$

In particular, for every non-zero vector
$v$ in the fiber over $x$ of this sub-bundle,
$$ \lambda_A^-(x,v)= \lambda_A(x,v)
=\lim_{n\to+\infty} \frac{1}{n}\,\log\norm{\An{n}(x)\,v}=L_{k+1}(A)\;. $$
\end{proposition}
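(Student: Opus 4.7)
I will identify both sides with the span of the ``middle'' vectors of a limit singular basis of $A$ at a $\mu$-regular $x$. First, at each finite scale $n$ with both gaps positive, $\mostexp_k(A^{(n)}(x)) \subset \mostexp_{k+r}(A^{(n)}(x))$ follows immediately from the SVD structure, and Proposition~\ref{prop: medir k exists} passes this to the limit on the Grassmannian, yielding $\medir{\infty}_k(A)(x) \subset \medir{\infty}_{k+r}(A)(x)$. Consequently the RHS $W(x) := \medir{\infty}_k(A)(x)^\perp \cap \medir{\infty}_{k+r}(A)(x)$ is an $r$-dimensional subspace of $\mostexp_k^\perp(x)$. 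By Proposition~\ref{prop Li (A vk perp) = Li+k(A)} the restricted cocycle $A\vert_{\mostexp_k^\perp}$ has top Lyapunov exponent $L_{k+1}(A)$ of multiplicity $r$ with a gap below, so the LHS $\medir{\infty}_r(A\vert_{\mostexp_k^\perp})(x)$ is also a well-defined $r$-dimensional subspace of $\mostexp_k^\perp(x)$, and it suffices to prove one inclusion between them.

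Fixing a limit singular basis $\{u_1,\ldots,u_m\}$ of $A$ at $x$ and applying Lemma~\ref{lemma limit sing basis} with $i=k$ and $i=k+r$ (the gaps $L_k>L_{k+1}$ and $L_{k+r}>L_{k+r+1}$ make the top exponent of $\wedge_i A$ simple in each case), I obtain $\text{span}(u_1,\ldots,u_k)=\medir{\infty}_k(A)(x)$ and $\text{span}(u_1,\ldots,u_{k+r})=\medir{\infty}_{k+r}(A)(x)$, so $W(x)=\text{span}(u_{k+1},\ldots,u_{k+r})$. To match the LHS with this, I will establish the inclusion $\medir{\infty}_r(A\vert_{\mostexp_k^\perp})(x) \subset \medir{\infty}_{k+r}(A)(x)$; combined with the trivial $\medir{\infty}_r(A\vert_{\mostexp_k^\perp})(x) \subset \mostexp_k^\perp(x)$, this yields containment in $W(x)$ and equality by dimension. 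At finite scale the top $r$ singular subspace of $A^{(n)}(x)\vert_{\mostexp_k(A^{(n)}(x))^\perp}$ equals exactly $\text{span}(u_{k+1,n},\ldots,u_{k+r,n})$ and lies in $\mostexp_{k+r}(A^{(n)}(x))$, whereas the top $r$ singular subspace of $A^{(n)}(x)\vert_{\mostexp_k^\perp(x)}$ is what defines $\medir{\infty}_r(A\vert_{\mostexp_k^\perp})(x)$ in the limit. These two restrictions differ only by their ambient subspace of restriction: the two ambients converge to $\medir{\infty}_k(A)^\perp(x)$ at exponential rate $e^{-n(L_k-L_{k+1})}$ by Proposition~\ref{prop: medir k exists}, while the singular-value gap of the restricted operator at position $r$ grows like $e^{n(L_{k+r}-L_{k+r+1})}$. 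A Wedin-type SVD perturbation estimate then forces the two top-$r$ subspaces to share a common limit, which lies in $\medir{\infty}_{k+r}(A)(x)$ because each of its finite-scale approximants does.

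The delicate point is this perturbation estimate: the crude operator-norm bound is useless since $\norm{A^{(n)}(x)}$ is itself exponentially large, so one must use the refined Wedin-type bound that measures only the component of the perturbation transverse to the top $r$ singular directions against the relevant singular-value gap, yielding a subspace perturbation of order $e^{-n[(L_k-L_{k+1})+(L_{k+r}-L_{k+r+1})]}$ which tends to zero. Once the subspace identification is secured, the ``in particular'' statement follows from Proposition~\ref{prop:lim mostexp infty = Lk} applied to $A\vert_{\mostexp_k^\perp}$ with top Lyapunov exponent $L_{k+1}(A)$ of multiplicity $r$: every nonzero $v$ in $\medir{\infty}_r(A\vert_{\mostexp_k^\perp})(x)$ satisfies $\lim_n \tfrac{1}{n}\log \norm{(A\vert_{\mostexp_k^\perp})^{(n)}(x)\,v} = L_{k+1}(A)$, and since $v\in \mostexp_k^\perp(x)$ implies $(A\vert_{\mostexp_k^\perp})^{(n)}(x)\,v = A^{(n)}(x)\,v$, the same growth rate holds under $A$ itself.
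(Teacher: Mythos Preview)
Your strategy coincides with the paper's: both compare the finite-scale object $\mostexp_r\bigl(A^{(n)}(x)\vert_{\medir{\infty}_k(A)(x)^\perp}\bigr)$ to the SVD block $\mathrm{span}(u_{k+1,n},\ldots,u_{k+r,n})$ (equivalently, to $\medir{n}_{k+r}(A)(x)\cap\medir{\infty}_k(A)(x)^\perp$) and then pass to the limit. The paper delegates the comparison to Proposition~3.22 of \cite{LEbook-chap2} in one line; you try to supply it via a ``refined Wedin-type bound''. The derivation of the last statement from Proposition~\ref{prop:lim mostexp infty = Lk} applied to $A\vert_{\mostexp_k^\perp}$ is the same in both.

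The gap is that your Wedin step, as written, does not close. When the restriction subspace is moved from $V_2=\medir{n}_k(A)(x)^\perp$ to $V_1=\medir{\infty}_k(A)(x)^\perp$, the induced perturbation of the Gram operator $(g\vert_{V})^\ast(g\vert_{V})$ is of the form $T^\ast (g\vert_{V_2^\perp})^\ast(g\vert_{V_2^\perp})T$ with $\|T\|\approx d(V_1,V_2)$; its norm is thus of order $\|T\|^2 s_1(g)^2$. Since $\|T\|\lesssim e^{-n(L_k-L_{k+1})}$ while $s_1(g)\approx e^{nL_1}$, this perturbation is only \emph{comparable} to the top eigenvalue $s_{k+1}(g)^2\approx e^{2nL_{k+1}}$, not small relative to the gap $s_{k+r}^2-s_{k+r+1}^2$. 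A bare Davis--Kahan/Wedin inequality therefore gives an $O(1)$ bound, not your claimed $e^{-n[(L_k-L_{k+1})+(L_{k+r}-L_{k+r+1})]}$. What makes the estimate in Proposition~3.22 work is extra structure you have not used: the perturbation is rank $\le k$ and positive semidefinite, and the normal $V_1^\perp=\medir{\infty}_k(A)(x)$ lies in $\medir{\infty}_{k+r}(A)(x)$, hence is nearly contained in $\mostexp_{k+r}(g)$, which tames precisely the dangerous component of $T$. You should either invoke Proposition~3.22 directly, or make this structural argument explicit; the phrase ``measures only the component of the perturbation transverse to the top $r$ singular directions'' is not yet a proof.
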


\begin{proof}
The stated relation is a simple application of 
Proposition 3.22 in~\cite{LEbook-chap2} to the
matrices $g=\An{n}(x)$. 
Notice that for a generic point $x\in X$ these matrices have exponentially small gap ratios
$\sgap_k(\An{n}(x))$ and $\sgap_{k+r}(\An{n}(x))$.
By that proposition
$$ \delta\left( 
\mostexp_r(\An{n}(x)\vert_{\mostexp_k^\perp}) , \,
\medir{n}_{k+r}(A)(x)\cap \medir{\infty}_k(A)(x)^\perp\, \right) \lesssim
\delta( \medir{n}_k(A)(x),\,  \medir{\infty}_k(A)(x)\,) $$
converges to zero. Hence the relation follows by taking the limit as $n$ tends to $+\infty$.

The last statement is a consequence of proposition~\ref{prop:lim mostexp infty = Lk}.
\end{proof}

\bigskip

Given a signature $\tau=(\tau_1,\ldots, \tau_k)$,  we define a sequence of partial functions
$\mostexp^{(n)}_\tau(A)$ on $X$ taking values on $\FF_\tau(\R^m)$,
$$ \mostexp^{(n)}_\tau(A)(x):=
\left\{ \begin{array}{lll}
\mostexp_\tau(A^{(n)}(x)) & & \text{ if }  \rgap_\tau(A^{(n)}(x))>1 \\
\text{undefined} & & \text{otherwise}.
\end{array} \right. $$

\medskip

We say that the Lyapunov spectrum of a cocycle $A$ has a {\em $\tau$-gap pattern}  when
$$ L_{\tau_j}(A) >L_{\tau_{j}+1}(A),\quad \text{ for all }\; 1\leq j<k\;.$$
The size of these gaps is measured by
$$ \gap_\tau(A):= \min_{1\leq j <k} L_{\tau_j}(A)  - L_{\tau_{j}+1}(A) \;.$$

\medskip

If moreover
$$ L_{\ell}(A)=L_{\ell+1}(A),\quad \text{ for all }\; 
\ell\notin \{\tau_1,\ldots, \tau_k\} \;,$$
we will say that 
the Lyapunov spectrum of $A$ has  {\em exact gap pattern  $\tau$}.
In this case we write
$\lambda_j(A):= L_{\tau_j}(A)$, for $j=1,\ldots, k+1$.
These   numbers span the complete Lyapunov spectrum of $A$ without   repetitions,
$$ \lambda_1(A) > \lambda_2(A) >\ldots > \lambda_k(A) > \lambda_{k+1}(A)\geq -\infty \;.$$

\begin{proposition}\label{mef:aec}
If the Lyapunov spectrum of $A$ has a $\tau$-gap pattern, then the sequence of partial functions $ \medir{n}_\tau(A)$ from $X$ to $\FF_\tau(\R^m)$ is almost everywhere Cauchy.
In particular, it  converges $\mu$ almost everywhere to a (total) measurable function $\medir{\infty}_\tau(A):X\to\FF_\tau(\R^m)$.
Moreover,  for $\mu$-a.e. $x\in X$,
$$\limsup_{n\to +\infty}\frac{1}{n} \,\log \, d(\medir{n}_\tau(A)(x), \medir{\infty}_\tau(A)(x)) \leq - \gap_\tau(A) <0 \;.$$
\end{proposition}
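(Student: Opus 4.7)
The plan is to reduce the flag-level statement to the already-established Grassmannian-level statement (Proposition~\ref{prop: medir k exists}) by treating each component of the signature separately, and then reassembling the resulting limits into a single flag.

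First, since $A$ has a $\tau$-gap pattern, we have $L_{\tau_j}(A) > L_{\tau_j+1}(A)$ for every $j=1,\ldots,k$. I would apply Proposition~\ref{prop: medir k exists} with $k$ replaced by $\tau_j$ for each $j$: this provides, for $\mu$-a.e.\ $x\in X$, the existence of the limit
$$\mostexp^{(\infty)}_{\tau_j}(A)(x) := \lim_{n\to +\infty} \mostexp^{(n)}_{\tau_j}(A)(x) \in \Gr_{\tau_j}(\R^m),$$
together with the rate bound
$$\limsup_{n\to +\infty}\frac{1}{n}\,\log d\bigl(\mostexp^{(n)}_{\tau_j}(A)(x),\, \mostexp^{(\infty)}_{\tau_j}(A)(x)\bigr) \leq L_{\tau_j+1}(A)-L_{\tau_j}(A).$$
Intersecting the (countably many) full-measure subsets of $X$ on which these convergences hold yields a single full-measure set on which all $k$ limits exist simultaneously.

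Next, one needs to verify that the $k$-tuple $(\mostexp^{(\infty)}_{\tau_1}(A)(x),\ldots, \mostexp^{(\infty)}_{\tau_k}(A)(x))$ is genuinely a $\tau$-flag, i.e.\ that the inclusions
$$\mostexp^{(\infty)}_{\tau_j}(A)(x) \subset \mostexp^{(\infty)}_{\tau_{j+1}}(A)(x)$$
hold for each $j$. At finite scale, the subspaces $\mostexp^{(n)}_{\tau_j}(A)(x)$ and $\mostexp^{(n)}_{\tau_{j+1}}(A)(x)$ are both spanned by the leading singular vectors of $\An{n}(x)$ (the first $\tau_j$, respectively the first $\tau_{j+1}$), hence the inclusion is automatic for all large $n$. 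Since the Grassmannian distance is continuous and the relation ``$V\subset W$'' is closed in $\Gr_{\tau_j}(\R^m)\times \Gr_{\tau_{j+1}}(\R^m)$, the inclusion passes to the limit. This defines a measurable function $\mostexp^{(\infty)}_\tau(A)\colon X\to \FF_\tau(\R^m)$.

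Finally, I would equip $\FF_\tau(\R^m)$ with the natural distance $d$ given by the maximum of the Grassmannian distances on the components (this is the metric implicit in Chapter 2 of~\cite{LEbook}). For all $n$ large enough that every component limit is defined,
$$d\bigl(\mostexp^{(n)}_\tau(A)(x),\, \mostexp^{(\infty)}_\tau(A)(x)\bigr)
=\max_{1\leq j\leq k} d\bigl(\mostexp^{(n)}_{\tau_j}(A)(x),\, \mostexp^{(\infty)}_{\tau_j}(A)(x)\bigr),$$
so taking $\log$, dividing by $n$, and passing to the limsup gives
$$\limsup_{n\to+\infty}\frac{1}{n}\,\log d\bigl(\mostexp^{(n)}_\tau(A)(x),\, \mostexp^{(\infty)}_\tau(A)(x)\bigr) \leq \max_{1\leq j\leq k}\bigl(L_{\tau_j+1}(A)-L_{\tau_j}(A)\bigr) = -\gap_\tau(A),$$
which is strictly negative by the $\tau$-gap assumption. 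In particular, the sequence is $\mu$-almost everywhere Cauchy in the sense of Definition~\ref{Cauchy}.

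The only place requiring any real care is the verification that the flag inclusions survive in the limit and, slightly more subtly, that the metric on $\FF_\tau(\R^m)$ is compatible with component-wise convergence in the sense used above; both are routine once the correct metric (the maximum of component Grassmannian distances, as in the setup of~\cite{LEbook-chap2}) is fixed. Everything else is simply an application of Proposition~\ref{prop: medir k exists} to the $k$ distinguished dimensions $\tau_1,\ldots,\tau_k$.
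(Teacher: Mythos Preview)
Your proposal is correct and follows exactly the approach of the paper, which simply states ``Apply Proposition~\ref{prop: medir k exists} at the dimensions $i=\tau_j$, with $j=1,\ldots,k$.'' You have merely spelled out the details the paper leaves implicit (the passage of the flag inclusions to the limit and the use of the max metric on $\FF_\tau(\R^m)$), all of which are routine and correctly handled.
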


\begin{proof}
Apply proposition~\ref{prop: medir k exists}  at the dimensions $i=\tau_j$, with $j=1,\ldots, k$. 
\end{proof}

We are now able to state and to prove the  Oseledets Multiplicative Ergodic Theorem, which has two versions, one on the existence of the Oseledets filtration, and the other on the existence of the Oseledets decomposition.

\begin{theorem}[Oseledets I]
\label{Oseledets non invertible}
Let  $T:X\to X$   be an ergodic automorphism of a  probability space  $(X,\mathscr{A},\mu)$,
and let $F_A:\Bundle \to \Bundle$ be a $\mu$-integrable linear cocycle on a measurable bundle $\mathcal{B}\subseteq X\times\R^m$.

Then there exist $\lambda_1>\lambda_2>\ldots >\lambda_k\geq -\infty$
and a family of measurable functions 
$F_j:X\to \Gr(\R^m)$, $1\leq j\leq k$, such that for $\mu$-almost every  $x\in X$,
\begin{enumerate}
\item[(a)] $A(x)\,F_j(x)\subseteq F_j(T x)$ for $j=1,\ldots, k$

\item[(b)] $\{0\}= F_{k+1}(x) \subsetneq F_k(x)
\subsetneq \ldots \subsetneq F_{2}(x) \subsetneq F_1(x) = \mathcal{B}(x)$

\item[(c)] for every $v\in F_j(x)\setminus F_{j+1}(x)$,
\;
$\displaystyle 
\lim_{n\to +\infty} \frac{1}{n}\,\log \norm{\An{n}(x)\,v} = \lambda_{j} $.
\end{enumerate}
\end{theorem}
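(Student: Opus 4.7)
The plan is to construct the Oseledets filtration from the orthogonal complements of the most expanding sub-bundles $\medir{\infty}_{\tau_{j-1}}(A)$, whose existence and invariance have already been established earlier in the excerpt. Let $\tau=(\tau_1,\ldots,\tau_k)$ be the exact gap pattern of the Lyapunov spectrum of $A$, so the distinct exponents are $\lambda_j:=L_{\tau_j}(A)$ with $\lambda_1>\lambda_2>\ldots>\lambda_k\geq -\infty$. With the conventions $\tau_0:=0$ and $\medir{\infty}_0(A)(x):=\{0\}$, I would set
$$F_j(x):=\medir{\infty}_{\tau_{j-1}}(A)(x)^\perp,\qquad 1\leq j\leq k+1.$$
Then $F_1(x)=\mathcal{B}(x)$ and $F_{k+1}(x)=\{0\}$ since $\tau_k=m$.

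Next I would check properties (a) and (b). Measurability of each $F_j$ follows from Proposition~\ref{prop: medir k exists}, which gives the $\mu$-a.e.\ existence of $\medir{\infty}_{\tau_{j-1}}(A)$ as a measurable function into $\Gr_{\tau_{j-1}}(\R^m)$, combined with the measurability of the orthogonal complement operation. The invariance $A(x)F_j(x)\subseteq F_j(Tx)$ is immediate from Proposition~\ref{ medir k invariance}(b). The strict inclusions $F_{j+1}(x)\subsetneq F_j(x)$ follow from $\tau_{j-1}<\tau_j$, which implies $\medir{\infty}_{\tau_{j-1}}(A)(x)\subsetneq \medir{\infty}_{\tau_j}(A)(x)$.

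The core step is (c). For $v\in F_j(x)\setminus F_{j+1}(x)$ I would decompose $v$ orthogonally as $v=v_1+v_2$ with
$$v_1\in \medir{\infty}_{\tau_{j-1}}(A)(x)^\perp\cap \medir{\infty}_{\tau_j}(A)(x), \qquad v_2\in \medir{\infty}_{\tau_j}(A)(x)^\perp.$$
Since $v\notin F_{j+1}$, one has $v_1\neq 0$. For $j\geq 2$, Proposition~\ref{prop medir (A vk perp) = medir(A)} applied with $k=\tau_{j-1}$ and $r=\tau_j-\tau_{j-1}$ yields $\lim_n \tfrac{1}{n}\log\norm{\An{n}(x)v_1}=L_{\tau_{j-1}+1}(A)=\lambda_j$; the case $j=1$ is the direct content of Proposition~\ref{prop:lim mostexp infty = Lk} applied to $A$ at level $\tau_1$. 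For the component $v_2$, Proposition~\ref{prop Li (A vk perp) = Li+k(A)} gives $L_1(A\vert_{\medir{\infty}_{\tau_j}(A)^\perp})=L_{\tau_j+1}(A)<\lambda_j$, so Proposition~\ref{lambda LE properties}(a) applied to the restricted cocycle delivers $\lambda_A(x,v_2)\leq L_{\tau_j+1}(A)<\lambda_j$. Finally, Proposition~\ref{lambda LE properties}(d) upgrades the two bounds to $\lambda_A^-(x,v)=\lambda_A(x,v)=\lambda_j$, which is exactly (c).

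The principal obstacle will be the measure-theoretic bookkeeping needed to ensure that all of these pointwise statements hold on a single full-measure set $X_0\subseteq X$: each cited proposition comes with its own set of $\mu$-regular phases, and one has to intersect finitely many such sets, one per index $j$ and one for each restricted cocycle $A\vert_{\medir{\infty}_{\tau_j}(A)^\perp}$. Once this is handled, the orthogonal decomposition together with item (d) of Proposition~\ref{lambda LE properties} closes the argument cleanly.
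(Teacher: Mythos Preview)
Your proposal is correct and follows essentially the same approach as the paper: define $F_j(x)=\medir{\infty}_{\tau_{j-1}}(A)(x)^\perp$, deduce (a) from Proposition~\ref{ medir k invariance}(b), deduce (b) by dimension counting, and prove (c) via the orthogonal splitting $v=v_1+v_2$ combined with Propositions~\ref{prop medir (A vk perp) = medir(A)}, \ref{prop Li (A vk perp) = Li+k(A)}, and \ref{lambda LE properties}(d). Your explicit separation of the case $j=1$ (invoking Proposition~\ref{prop:lim mostexp infty = Lk} directly) is in fact slightly more careful than the paper, which silently absorbs that case into the general reference to Proposition~\ref{prop medir (A vk perp) = medir(A)}.
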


\begin{proof}
Assume the cocycle $A$ has a Lyapunov spectrum  with exact gap pattern $\tau=(\tau_1,\ldots, \tau_{k-1})$, where $0=\tau_0<\tau_1<\ldots <\tau_{k-1}<\tau_k = \dim E$, and $E=E(x)$ denotes the fiber of $\mathcal{B}$.
Set by convention $\medir{\infty}_{\tau_0}(A)=\{0\}$
and $\medir{\infty}_{\tau_{k}}(A)=E(x)$.

Define $F_j(x):= \medir {\infty}_{\tau_{j-1}}(A)(x)^\perp$
for $j=1,\ldots, k+1$, so that
$\dim F_j(x)= \dim E -\tau_{j-1}$. This implies (b).

The invariance (a) follows from proposition~\ref{ medir k invariance}.

To shorten notation let us write respectively
$\mostexp_k$, and $\mostexp_k^\perp$, instead of 
$\medir{\infty}_k(A)(x)$, and $\medir{\infty}_k(A)(x)^\perp$.
Given $v\in F_j\setminus F_{j+1}=\mostexp_{\tau_{j-1}}^\perp\setminus 
\mostexp_{\tau_{j}}^\perp$, consider the orthogonal decomposition
$v=u+v'$, with $u\in \mostexp_{\tau_{j-1}}^\perp \cap \mostexp_{\tau_{j}}$, $u\neq 0$, and $v'\in \mostexp_{\tau_{j}}^\perp$.
By proposition~\ref{prop medir (A vk perp) = medir(A)} the non-zero vector $u$ is in the fiber of
$$\medir{\infty}_{\tau_{j}-\tau_{j-1}}\left(
A\vert_{ \mostexp_{\tau_{j-1}}^\perp} \right)
= \medir{\infty}_{\tau_{j-1}}(A)^\perp \cap \medir{\infty}_{\tau_{j}}(A) \;, $$
and
\begin{align*}
\lambda_A^-(x,u)=\lambda_A(x,u)    =   L_{\tau_{j-1}+1}(A) = L_{\tau_{j}}(A)=\lambda_{j}(A)\;.
\end{align*}

Analogously, and using Proposition~\ref{prop Li (A vk perp) = Li+k(A)},
\begin{align*}
\lambda_A(x,v')   \leq  L_1\left( A\vert_{\mostexp_{\tau_{j}}^\perp}\right) = L_{\tau_{j}+1}(A) = L_{\tau_{j+1}}(A)=\lambda_{j+1}(A)<\lambda_{j}(A)\;.
\end{align*} 

Finally, applying item (d) of proposition~\ref{lambda LE properties} we infer that
$$ \lambda_A^-(x,v)= \lambda_A(x,v)= \lambda_A(x,u+v')=\lambda_A(x,u) = \lambda_{j}(A)\;.$$
This proves (c).

\end{proof}

%
%

\begin{definition}\label{def pseudo inverse}
Given a linear map $g:V\to V'$, between Euclidean spaces $V$ and $V'$, its pseudo inverse  $g^+:V'\to V$ is the composition $g^+:= (g\vert_{\Ker_g^\perp})^{-1}\circ \pi_{\Range_g}$
of the orthogonal projection
$\pi_{\Range_g}:V'\to \Range_g$ with the inverse of 
$g\vert_{\Ker_g^\perp}:\Ker_g^\perp\to\Range_g$.
\end{definition}

\begin{lemma}
For any linear map $g:V\to V'$,
and   integer $0\leq k\leq \dim V$,
$$\wedge_k(g^+)= (\wedge_k g)^+\;.$$
\end{lemma}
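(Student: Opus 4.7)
The plan is to verify that $\wedge_k g^+$ satisfies the four Moore--Penrose identities that uniquely characterize the pseudoinverse of $\wedge_k g$. First I would read off from the given definition that $g g^+ = \pi_{\Range_g}$ and $g^+ g = \pi_{\Ker_g^\perp}$ are orthogonal projections, hence self-adjoint, and that $g g^+ g = g$ and $g^+ g g^+ = g^+$ follow directly. Together these are the four Moore--Penrose identities for the pair $(g, g^+)$:
\begin{equation*}
g h g = g, \qquad h g h = h, \qquad (g h)^\ast = g h, \qquad (h g)^\ast = h g,
\end{equation*}
with $h = g^+$. Since the exterior power is a $\ast$-functor, namely $\wedge_k(AB) = (\wedge_k A)(\wedge_k B)$ and $(\wedge_k A)^\ast = \wedge_k(A^\ast)$, applying $\wedge_k$ to each of these identities yields the corresponding identities for the pair $(\wedge_k g,\, \wedge_k g^+)$. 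By uniqueness of the Moore--Penrose inverse this forces $\wedge_k(g^+) = (\wedge_k g)^+$.

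The main supporting facts to invoke are thus: (i) uniqueness of the Moore--Penrose inverse, which is a short standard argument (if $h_1, h_2$ both satisfy the four identities, the self-adjointness conditions let one rewrite $h_1 = h_1 g h_1 = \cdots = h_2$); and (ii) the multiplicativity and adjoint-compatibility of $\wedge_k$, both of which are immediate from $\wedge_k A\,(u_1 \wedge \cdots \wedge u_k) = A u_1 \wedge \cdots \wedge A u_k$ and the convention that wedges of an orthonormal basis form an orthonormal basis of $\wedge_k V$.

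A more hands-on alternative would be to use singular value decompositions, in the spirit of the rest of this section. One picks a singular basis $\{v_i\}$ of $V$ for $g$ and the associated orthonormal basis $\{w_i\}$ of $V'$, so $g v_i = s_i(g)\, w_i$ and $g^+ w_i = s_i(g)^{-1} v_i$ (interpreted as $0$ when $s_i(g) = 0$). Then the vectors $v_{i_1} \wedge \cdots \wedge v_{i_k}$ form a singular basis for $\wedge_k g$ with singular values $\prod_j s_{i_j}(g)$, and one checks directly that both $\wedge_k(g^+)$ and $(\wedge_k g)^+$ send $w_{i_1} \wedge \cdots \wedge w_{i_k}$ to $\bigl(\prod_j s_{i_j}(g)\bigr)^{-1}\, v_{i_1} \wedge \cdots \wedge v_{i_k}$ when all factors are nonzero and to $0$ otherwise, so the two maps agree on a basis. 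Neither route presents a real obstacle; the only thing to be slightly careful about is the convention $s_i(g)^{-1} = 0$ on the kernel, which is exactly what ensures that both the Moore--Penrose identities and the zero cases in the SVD argument match up.
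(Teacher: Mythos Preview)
Your argument is correct, and both routes you sketch work. The paper takes a different path: it works directly from the formula $g^+ = (g\vert_{\Ker_g^\perp})^{-1}\circ \pi_{\Range_g}$ and checks that $\wedge_k$ commutes with each ingredient, using the three facts $\wedge_k \Range_g = \Range_{\wedge_k g}$, $\wedge_k(g\vert_E) = (\wedge_k g)\vert_{\wedge_k E}$, and $\wedge_k(g^{-1}) = (\wedge_k g)^{-1}$, together with the identity $\wedge_k(\Ker_g^\perp) = \wedge_k(\Range_{g^\ast}) = \Range_{\wedge_k g^\ast} = \Ker_{\wedge_k g}^\perp$. Your Moore--Penrose approach is more conceptual and transportable: once you know $\wedge_k$ is a $\ast$-functor, the four identities transfer mechanically and uniqueness does the rest, with no need to track kernels, ranges, or restrictions. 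The paper's computation, by contrast, stays closer to the concrete definition it just introduced and avoids invoking the uniqueness theorem for the pseudoinverse (which it has not stated). Your SVD alternative is essentially a coordinate version of the paper's argument, since a singular basis simultaneously diagonalizes the projections and the restricted inverse.
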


\begin{proof}
We make use of three properties which can be easily checked: 
(1) $\wedge_k \Range_g = \Range_{\wedge_k g}$,\\
(2) $\wedge_k(g\vert_E)=\wedge_k g\vert_{\wedge_k E}$ \; and \\ (3) $(\wedge_k g)^{-1}=\wedge_k (g^{-1})$.

Thus
$\wedge_k (\Ker_g^\perp ) = \wedge_k (\Range_{g^\ast} )
= \Range_{\wedge_k g^\ast} = \Ker_{\wedge_k g}^\perp$, and 
\begin{align*}
\wedge_k(g^+) &= \wedge_k  (g\vert_{\Ker_g^\perp})^{-1}\circ \wedge_k  \pi_{\Range_g} = 
(\wedge_k g\vert_{ \wedge_k \Ker_g^\perp})^{-1}\circ \pi_{\wedge_k \Range_g} \\
&= 
(\wedge_k g\vert_{ \Ker_{\wedge_k g}^\perp})^{-1}\circ \pi_{ \Range_{\wedge_k g}}  =(\wedge_k g)^+\;.
\end{align*}
Behind this cumbersome algebraic calculation there is a geometric meaning to grasp.
\end{proof}

\begin{definition}\label{def backward iterates}
Given a cocycle $A:X\to\Mat(m,\R)$, we define for $n>0$
$$ \An{-n}(x):= \An{n}(T^{-n} x)^+ \;. $$
\end{definition}

When the cocycle takes invertible values, i.e., $A:X\to \GL(m,\R)$,
the backward iterates $\An{-n}(x)$ correspond to forward iterates
by the inverse cocycle $(T^{-1},A^{-1})$.

\begin{theorem}[Oseledets II]\label{thm:oseledets2}
Let  $T:X\to X$   be an ergodic automorphism of a  probability space  $(X,\mathscr{A},\mu)$,
and let $F_A:\Bundle \to \Bundle$ be a $\mu$-integrable linear cocycle on a measurable bundle $\mathcal{B}\subseteq X\times\R^m$.

Then there exist $\lambda_1>\lambda_2>\ldots > \lambda_{k+1}\geq -\infty$
and a family of measurable functions 
$E_j:X\to \Gr(\R^m)$, $1\leq j\leq k+1$, such that for $\mu$-almost every $x\in X$,
\begin{enumerate}
\item[(a)] $\mathcal{B}(x)=\oplus_{j=1}^{k+1} E_j(x)$,
\item[(b)] $A(x)\,E_j(x) = E_j(T x)$ \, for $j=1,\ldots, k$, and \, $A(x)\,E_{k+1}(x)\subseteq E_{k+1}(T x)$,

\item[(c)]  for every $v\in E_j(x)\setminus \{0\}$,
\;  $\displaystyle  \lim_{n\to \pm \infty} \frac{1}{n}\,\log \norm{\An{n}(x)\,v} =
 \lambda_{j}  $,
 
 \item[(d)]  
 $\displaystyle  \lim_{n\to \pm \infty} \frac{1}{n}\,\log \abs{\sin \measuredangle_{{\rm min}} (\oplus_{j\leq l}E_j(T^n x), \oplus_{j>l}E_j(T^n x))} = 0$, \, for any $l=2,\dots, k$.
\end{enumerate} 
\end{theorem}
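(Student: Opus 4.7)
The plan is to construct the Oseledets decomposition as intersections of the Oseledets filtration of $A$ with ``equivariant fast subspaces'' built from the adjoint cocycle $A^*$, as announced in the introduction. Assume $A$ has an exact $\tau$-gap pattern $\tau=(\tau_1,\dots,\tau_k)$ with distinct Lyapunov exponents $\lambda_1>\dots>\lambda_{k+1}$. Theorem~\ref{Oseledets non invertible} applied to $A$ gives the decreasing filtration $F_j(x):=\medir{\infty}_{\tau_{j-1}}(A)(x)^\perp$. Since $A^*$ has the same Lyapunov spectrum and hence the same gap pattern, the sub-bundles $V_j(x):=\medir{\infty}_{\tau_j}(A^*)(x)$ form an increasing chain of dimensions $\tau_j$. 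I then set
\[
E_j(x):=V_j(x)\cap F_j(x)\quad\text{for }1\le j\le k,\qquad E_{k+1}(x):=F_{k+1}(x).
\]

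For (a), the higher-rank analogue of Lemma~\ref{lemma  alpha v* v >0} obtained by applying it to the exterior power cocycle $\wedge_{\tau_j}A$ yields $V_j\cap\medir{\infty}_{\tau_j}(A)(x)^\perp=V_j\cap F_{j+1}=\{0\}$; a standard dimension computation then gives $\dim E_j=\tau_j-\tau_{j-1}$ and $\R^m=\bigoplus_j E_j$. For (b), the identity $A(x)\,V_j(x)=V_j(Tx)$ follows from Proposition~\ref{ medir k invariance}(a) applied to the cocycle $A^*$ over $T^{-1}$ (together with functoriality of the adjoint), which combined with the forward invariance of the $F_j$ from Theorem~\ref{Oseledets non invertible}(a) gives $A(x)\,E_j(x)\subseteq E_j(Tx)$; equality holds for $j\le k$ because $A(x)$ restricted to $V_k$ is injective (its kernel would lie in $F_{k+1}$, but $V_k\cap F_{k+1}=\{0\}$ by the same transversality).

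To establish (c), observe that the transversality above also gives $E_j\cap F_{j+1}\subseteq V_j\cap F_{j+1}=\{0\}$, so every $v\in E_j\setminus\{0\}$ lies in $F_j\setminus F_{j+1}$ and hence $\lim_{n\to+\infty}\frac{1}{n}\log\|\An{n}(x)v\|=\lambda_j$ by Theorem~\ref{Oseledets non invertible}(c). For the backward limit with $j\le k$, I exploit the forward invariance $A(x)E_j(x)=E_j(Tx)$ and work with the restricted cocycle $A|_{E_j}$, which takes values in $\GL$ since $V_k$ avoids the kernel of $A$. A Birkhoff-type argument in the spirit of Proposition~\ref{prop  lim 1/n log ||An v*|| = L1} then shows that for any measurable unit section $v$ of $E_j$ the function $\log\|A(x)\,v(x)\|$ is $\mu$-integrable with integral exactly $\lambda_j$; the pseudo-inverse $\An{-n}(x)$ of Definition~\ref{def backward iterates} restricts on $E_j$ to an honest inverse of $\An{n}$, which converts the forward rate into the sought-after backward rate $\lim_{n\to-\infty}\frac{1}{n}\log\|\An{n}(x)v\|=\lambda_j$.

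Finally, property (d) follows from the identifications $\bigoplus_{j\le l}E_j=V_l=\medir{\infty}_{\tau_l}(A^*)$ and $\bigoplus_{j>l}E_j=F_{l+1}=\medir{\infty}_{\tau_l}(A)^\perp$: the minimum angle between these complementary-dimensional subspaces at $T^n x$ coincides, up to the natural identification via the $\tau_l$-th exterior power, with $\aangle(\medir{\infty}(\wedge_{\tau_l}A^*)(T^n x),\medir{\infty}(\wedge_{\tau_l}A)(T^n x))$, for which Proposition~\ref{prop 1/n log aaangle = 0}(a) applied to the wedge cocycle $\wedge_{\tau_l}A$ provides the sub-exponential estimate. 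The main obstacle throughout is the non-invertibility of the fiber action: the block $E_{k+1}$ must be treated separately since backward iteration is genuinely ill-defined there when $\lambda_{k+1}=-\infty$, which is why (b) weakens to an inclusion and (c) is silent about $n\to-\infty$ on that piece; for $j\le k$, however, the restriction of $A$ to $V_k$ is invertible and the pseudo-inverse supplies the missing backward dynamics.
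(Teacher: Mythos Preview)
Your construction and the overall architecture coincide with the paper's: the same definition $E_j=\medir{\infty}_{\tau_j}(A^\ast)\cap\medir{\infty}_{\tau_{j-1}}(A)^\perp$, the same use of Lemma~\ref{lemma  alpha v* v >0} on exterior powers for transversality (a), Proposition~\ref{ medir k invariance} for invariance (b), and Proposition~\ref{prop 1/n log aaangle = 0}(a) for the angle estimate (d). Your forward argument in (c), routing through Theorem~\ref{Oseledets non invertible}(c) via $E_j\cap F_{j+1}=\{0\}$, is a clean shortcut compared to the paper's direct computation.

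There is, however, a genuine gap in your backward argument for (c) when $\dim E_j>1$. You claim that ``for any measurable unit section $v$ of $E_j$ the function $\log\|A(x)v(x)\|$ is $\mu$-integrable with integral exactly $\lambda_j$,'' and that a Birkhoff argument then gives the backward rate. But for an arbitrary unit section $v$ of a multi-dimensional $E_j$ the vector $A(x)v(x)$ is \emph{not} a scalar multiple of $v(Tx)$, so the additive cocycle identity $\log\|\An{n}(x)v(x)\|=\sum_{j=0}^{n-1}\log\|A(T^jx)v(T^jx)\|$ fails, and there is no Birkhoff sum to average. Proposition~\ref{prop  lim 1/n log ||An v*|| = L1} works precisely because $\medir{\infty}(A^\ast)$ is one-dimensional, forcing $A(x)v^\ast(x)=\pm\|A(x)v^\ast(x)\|\,v^\ast(Tx)$.

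The paper repairs exactly this point by passing to the top exterior power: with $d_j=\tau_j-\tau_{j-1}$, the line $\wedge_{d_j}E_j\subset\wedge_{d_j}\R^m$ is a one-dimensional invariant sub-bundle of $\wedge_{d_j}A$, so the $j=1$, $\tau_1=1$ case applies to it and yields $\lim_{n\to\pm\infty}\tfrac{1}{n}\log\|\wedge_{d_j}\An{n}(x)(v_1\wedge\cdots\wedge v_{d_j})\|=d_j\lambda_j$; one then squeezes the individual $\|\An{n}(x)v_i\|$ between this and the operator norm of the restriction. Inserting this exterior-power reduction (or, equivalently, invoking Proposition~\ref{prop medir (A vk perp) = medir(A)} as the paper does for the general $j$) closes the gap.
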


\begin{proof}
Assume that $A$ has a Lyapunov spectrum  with exact gap pattern $\tau=(\tau_1,\ldots, \tau_{k})$, where $0=\tau_0<\tau_1<\ldots <\tau_{k}<\tau_{k+1} = \dim E$, and $E=E(x)$ denotes the fiber of $\mathcal{B}$.
Set by convention $\medir{\infty}_{\tau_0}(A)=\{0\}$
and $\medir{\infty}_{\tau_{k+1}}(A)=E(x)$.

Define $E_j(x):= \medir{\infty}_{\tau_{j}}(A^\ast)(x)\cap \medir{\infty}_{\tau_{j-1}}(A)(x)^\perp$
for $j=1,\ldots, k+1$.

By proposition~\ref{ medir k invariance}, both sub-bundles
$\medir{\infty}_{\tau_{j}}(A^\ast)$ and $\medir{\infty}_{\tau_{j-1}}(A)^\perp$ are $A$-invariant, and hence the same is true about the intersection.
This proves (b).

For (a) consider the flag valued measurable functions
\begin{align*}
\mostexp_\tau^\ast(x) &=(\medir{\infty}_{\tau_1}(A^\ast)(x),\ldots, \medir{\infty}_{\tau_k}(A^\ast)(x) )\in \FF_{\tau}(E(x))  \;, \\
\mostexp_\tau^\perp(x) &=(\medir{\infty}_{\tau_k}(A)(x)^\perp,\ldots, \medir{\infty}_{\tau_1}(A)(x)^\perp )\in \FF_{\tau^\perp}(E(x)) \;. 
\end{align*}

According to Definition 3.5 in~\cite{LEbook-chap2}
we have
$$ \{ E_j(x)\}_{1\leq j\leq k+1} = 
\mostexp_\tau^\ast(x) \sqcap  \mostexp_\tau^\perp(x) \;.  $$

Thus, in view of Proposition 3.14  in~\cite{LEbook-chap2}
it is now enough to see that
$\theta_{\sqcap}(\mostexp_\tau^\ast(x), \mostexp_\tau^\perp(x)) >0$ for $\mu$-a.e. $x\in X$.
But by Definition 3.4
and Lemma 3.10  in~\cite{LEbook-chap2},
\begin{align*}  \theta_{\sqcap}(\mostexp_\tau^\ast, \mostexp_\tau^\perp ) & =\min_{1\leq i\leq k}
\theta_\cap( \medir{\infty}_{\tau_i}(A^\ast),
\medir{\infty}_{\tau_i}(A)^\perp  ) \\
& =\min_{1\leq i\leq k}
\aangle_{\tau_i} ( \medir{\infty}_{\tau_i}(A^\ast),
\medir{\infty}_{\tau_i}(A)  ) \\
& =\min_{1\leq i\leq k}
\aangle  ( \medir{\infty}(\wedge_{\tau_i} A^\ast),
\medir{\infty}(\wedge_{\tau_i} A)   ) >0\;.
\end{align*}
The final positivity follows from lemma~\ref{lemma  alpha v* v >0}. This proves (a), or in other words  that $\{ E_j(x)\}_{1\leq j\leq k+1}$ is a direct sum decomposition of $E(x)$ with
$\dim E_j(x)= \tau_j-\tau_{j-1}$, for all $j=1,\ldots, k+1$.

We   prove (c) through several reductions.

Consider first the case $j=1$ and $\tau_1=1$.
In this case $\tau_0=0$ and the intersection sub-bundle
$E_j$  is the $1$-dimensional $A$-invariant bundle $\medir{\infty}(A^\ast)$.

Let $v^\ast:X\to\Proj$ be a unit measurable section of this bundle. By invariance we have   $A(x)\,v^\ast(x)=\pm v^\ast( T x)$ or $\mu$-a.e. $x\in X$,
and hence we get
$$ \log \norm{\An{n}(x)\,v^\ast(x)} =
\sum_{j=0}^{n-1} \log \norm{A(T^j x)\,v^\ast(T^j x)} \;. $$

By proposition~\ref{prop  lim 1/n log ||An v*|| = L1},
the function $\log \norm{A\,v^\ast}$ is $\mu$-integrable, with $\int \log \norm{A\,v^\ast}\,d\mu=L_1(A)$.
Therefore, by Birkhoff's ergodic theorem 
we have
$$ \lim_{n\to+\infty} \frac{1}{n}\,\log \norm{\An{n}(x)\,v^\ast(x)} = L_1(A)\;. $$
On the other hand, since $T$ is invertible, the Birkhoff averages
$$ \log \norm{\An{n}(T^{-n} x)\,v^\ast(T^{-n} x)} =
\sum_{j=0}^{n-1} \log \norm{A(T^{-j} x)\,v^\ast(T^{-j} x)}   $$
also converge $\mu$-almost everywhere  to $L_1(A)$.
Now, inverting the relation
$$  \An{n}(T^{-n} x)\,v^\ast(T^{-n} x) = \norm{\An{n}(T^{-n} x)\,v^\ast(T^{-n} x)}\, v^\ast(x) \;,$$ 
we get
$$  \An{n}(T^{-n} x)^+ \,v^\ast(x) =  \norm{\An{n}(T^{-n} x)\,v^\ast(T^{-n} x)}^{-1}\, v^\ast(T^{-n} x) \;,$$ 
so that 
$$ \log \norm{\An{-n}(x)\,v^\ast(x) }
= -\log \norm{\An{n}(T^{-n} x)\,v^\ast(T^{-n} x)}\;.$$
Thus
$$ \lim_{n\to  \infty} \frac{1}{n}\,\log \norm{\An{-n}(x)\,v^\ast(x)} = - L_1(A)\;. $$

Next consider the case $j=1$ and $r = \tau_1>1$.
In this case $\tau_0=0$ and the intersection sub-bundle $E_j$ is the $r$-dimensional $A$-invariant bundle $\medir{\infty}_r(A^\ast)$.

Given a unit vector $v_1$ in $\medir{\infty}_r(A^\ast)$,
include it in some orthonormal basis $\{v_1,\ldots, v_r\}$ of $\medir{\infty}_r(A^\ast)$ and take the unit $r$-vector
$w=v_1\wedge \ldots \wedge v_r$.
Applying the previous case to the cocycle $\wedge_r A$ and
$w\in \medir{\infty}(\wedge_r A)$, we conclude that
$$ \lim_{n\to  \pm \infty} \frac{1}{n}\,\log \norm{\wedge_r \An{n}(x)\,(v_1\wedge \ldots \wedge v_r)} =   L_1(\wedge_r A)=r\,L_1(A)\;. $$
By proposition~\ref{prop lambda wedge <= } we have
$$  \log\norm{\wedge_r \An{n}(x)(v_1\wedge\ldots \wedge v_r) } 
\leq \sum_{i=1}^r 
\log\norm{\An{n}(x)v_i} \leq
r\,\log\norm{\An{n}(x) }\;, $$
and since both upper and lower bounds of this sum converge to $r\,L_1(A)$, as $n\to\pm \infty$, we conclude that for all $i=1,\ldots, r$, and in particular for $i=1$,
$$\lim_{n\to\pm \infty}\frac{1}{n}\,\log \norm{\An{n}(x)v_i} =L_1(A)\;. $$

Finally consider the general case, where $2\leq j \leq k$. 
By proposition~\ref{prop medir (A vk perp) = medir(A)},
$$  E_j = \medir{\infty}_{\tau_{j}}(A^\ast)\cap \medir{\infty}_{\tau_{j-1}}(A)^\perp
=  \medir{\infty}_{\tau_j-\tau_{j-1}}(A\vert_{\mostexp_{\tau_{j-1}}^\perp} )\;.$$
We denote this $A$-invariant sub-bundle by $\mathcal{B}_j$.
Given a non-zero vector $v\in\mathcal{B}_j(x)$, applying the previous case to the restricted cocycle
$A\vert_{\mathcal{B}_j}$ by proposition~\ref{prop Li (A vk perp) = Li+k(A)}
\begin{align*}
\lim_{n\to\pm \infty}\frac{1}{n}\,\log \norm{\An{n}(x)\,v} &=
\lim_{n\to\pm \infty}\frac{1}{n}\,\log \norm{(A\vert_{\mathcal{B}_j})^{(n)}(x)\,v} \\
&= L_{\tau_j-\tau_{j-1}}(A\vert_{\mathcal{B}_j})
=L_{\tau_j}(A)=\lambda_j(A)\;.
\end{align*}
This proves (c).

For the last item, (d), notice that
$\oplus_{j\leq l} E_j = \medir{\infty}_{\tau_l}(A^\ast)$,
while
$\oplus_{j> l} E_j =\medir{\infty}_{\tau_l}(A)^\perp$.
Hence by item (d) of Proposition 2.10 in~\cite{LEbook-chap2},
\begin{align*}
\abs{\sin \measuredangle_{{\rm min}} &(\oplus_{j\leq l}E_j(x),  \oplus_{j>l}E_j(x))}  =
\deltamin( \medir{\infty}_{\tau_l}(A^\ast),
\medir{\infty}_{\tau_l}(A)^\perp 
)\\
&\geq 
\aangle_{\tau_l}( \medir{\infty}_{\tau_l}(A^\ast),
\medir{\infty}_{\tau_l}(A) 
) =\aangle( \medir{\infty}(\wedge_{\tau_l} A^\ast),
\medir{\infty}(\wedge_{\tau_l} A) 
)
\;.
\end{align*}
Thus item (d) follows by proposition~\ref{prop 1/n log aaangle = 0} (a).

\end{proof}

\pagebreak


\section{Abstract continuity theorem of the Oseledets filtration and decomposition}
\label{continuity_oseledets}
Given a space of cocycles  satisfying base and uniform fiber LDT estimates, we prove that the Oseledets filtration and decomposition vary continuously with the cocycle in an $L^1$ sense. 
We first prove the continuity of the most expanding direction, as it contains the main ingredients of our argument. We then define the space of measurable filtrations and endow it with an appropriate topology. Using the construction of the Oseledets filtration and decomposition in Subsection~\ref{met}, we deduce the continuity of these two quantities from that of the most expanding direction. This is obtained via some Grassmann geometrical considerations established in Chapter 2 of~\cite{LEbook}, see also~\cite{LEbook-chap2}.

We begin by describing the abstract setup of our continuity theorems.

\subsection{Assumptions on the space of cocycles}
\label{act assumptions}
\begin{definition}\label{def:cocycle:space}
A  space of measurable cocycles  $\mathcal{C}$
is any class of matrix valued functions
$A:X\to \Mat (m, \R)$, where $m\in\N$ is not fixed, such that
every  $A:X\to \Mat (m, \R)$ in $\mathcal{C}$ has the following properties:
\begin{enumerate}
\item $A$ is measurable.
\item $\norm{A}\in L^\infty(\mu)$.
\item The exterior powers $\wedge_k A: X\to \Mat_{\binom{m}{k}}(\R)$ are in $\mathcal{C}$,
for $k\leq m$.
\end{enumerate}

Each subspace 
$\mathcal{C}_m :=\{\, A\in \mathcal{C} \ | 
A:X\to \Mat_m(\R)\,\} $
is a-priori endowed with a distance 
$\dist \colon \mathcal{C}_m \times \mathcal{C}_m \to [0,+\infty)$ which is at least as fine as the $L^\infty$ distance, i.e. for all $A, B \in \cocycles_m$ we have
$$\dist (B, A) \ge \norm{B-A}_{L^\infty}\,.$$  

We assume a correlation between the distances on each of these subspaces, namely the map 
$\cocycles_m \ni A \mapsto \wedge_k \, A \in \cocycles_{m \choose k} $
is locally Lipschitz. 
\end{definition}

\smallskip

The functions $ \frac{1}{n} \, \log \norm{ \An{n} (x) }$ are integrable, and their integrals, denoted by $L^{(n)}_1 (A)$, will be referred to as finite scale (top) Lyapunov exponents. 

We  need stronger integrability assumptions on these  functions.
\begin{definition}\label{def:Lp-bounded}
A cocycle $A \in \cocycles$ is called $L^p$-bounded if there is a constant $C < \infty$, which we call its $L^p$-bound, such that for all $n \ge 1$ we have:
\begin{equation}\label{eq:Lp-bounded}
\bnorm{ \frac{1}{n} \, \log \norm{\An{n} (\cdot)}}_{L^p} < C
\end{equation}
\end{definition}
A cocycle $A \in \cocycles_m$ is called uniformly $L^p$-bounded, if the above bound holds uniformly near $A$.

\medskip

Given a cocycle $A \in \cocycles$ and an integer $N \in \N$, denote by $\mathscr{F}_N (A)$ the algebra generated by the sets $\{ x \in X \colon \norm{\An{n} (x)}  \le c \}$ or $\{ x \in X \colon \norm{\An{n} (x)}  \ge c \}$ where $c \ge 0$ and $0 \le n \le N$.

Let $\observables$ be a set of measurable functions $\xi \colon X \to \R$, which we call observables. Let $A \in \cocycles$.

\begin{definition}\label{compatibility_condition}
We say that $\observables$ and $A$ are compatible if for every integer $N \in \N$, for every set $F \in \mathscr{F}_N (A)$ and for every $\epsilon > 0$, there is an observable $\xi \in \observables$ such that:
\begin{equation}\label{compatibility_condition-eq}
\ind_{F} \le \xi \quad \text{and} \quad \int_X \xi \, d \mu \le \mu (F) + \epsilon\,.
\end{equation}
\end{definition}

\medskip


To describe the LDT estimates  we introduce the following formalism. From now on, $\devf, \, \mesf \colon (0, \infty) \to (0, \infty)$ will represent  functions that describe respectively, the size of the deviation from the mean and the measure of the deviation set. We assume that the deviation size functions $\devf (t)$ are non-increasing. We assume that the deviation set measure functions $\mesf (t)$  are continuous and strictly decreasing to $0$, as $t \to \infty$, at least like a power and at most like an exponential: for some $c_0 > 0$,
$$e^{- c_0 \, t} < \mesf (t) < t^{- 10} \ \text{  as } t \to \infty \,.$$ 

We  use the notation $\dev_n := \devf (n)$ and $\mes_n := \mesf (n)$ for integers $n$.

Let $\params$ be a set of triplets  $\param = (\nzerobar, \devf, \mesf)$, where $\nzerobar$ is an integer and $\devf$ and $\mesf$ are deviation functions.  We call the elements of $\params$ LDT parameters. 
 
 \medskip

We now define the base and fiber LDT estimates. 

\begin{definition}\label{def:base-LDT}
An observable $\xi \colon X \to \R$ satisfies a base-LDT estimate w.r.t. a space of parameters $\params$  if for every $\ep > 0$ there is $\param = \param (\xi, \ep) \in \params$, $\param = (\nzerobar, \devf, \mesf)$, such that for all $n \ge \nzerobar$ we have $\dev_n \le \ep$ and
\begin{equation}\label{eq:base-LDT}
\mu \, \{ x \in X \colon \abs{ \frac{1}{n} \, S_n\xi \,(x) - \avg{\xi } } > \dev_n \} < \mes_n\;. 
\end{equation} 
\end{definition}

 \medskip

\begin{definition}\label{def:fiber-LDT}
A measurable cocycle $A:X\to\Mat(m,\R)$ satisfies a fiber-LDT estimate w.r.t. a space of parameters $\params$  if  for every $\ep > 0$ there is $\param = \param (A, \ep) \in \params$, $\param = (\nzerobar, \devf, \mesf)$, such that for all $n \ge \nzerobar$ we have $\dev_n \le \ep$ and
\begin{equation}\label{eq:ufiber-LDT-o}
\mu \, \{ x \in X \colon \abs{ \frac{1}{n} \, \log \norm{ \An{n} (x) } - \Lan{n}{1} (A) } > \dev_n \} < \mes_n\;. 
\end{equation}

The cocycle $A$ satisfies a uniform fiber-LDT estimate if \eqref{eq:ufiber-LDT-o} holds in a neighborhood of $A$ with the same LDT parameter $\param$.
\end{definition}

\bigskip

Our  continuity results on the most expanding direction, the Oseledets filtration and decomposition hold under the same  assumptions in the abstract continuity theorem (ACT) of the Lyapunov exponents  (Theorem 3.1 in~\cite{LEbook} or Theorem 1.1 in~\cite{LEbook-chap3}). 

Thus for the rest of the paper, we will be under the following:
\subsection*{Assumptions.}\label{oseledets continuity assumptions} 
Consider an ergodic system $(X,  \mu, T)$, a space of measurable cocycles $\mathcal{C}$, a set of observables $\observables$, a set of LDT parameters $\params$ such that
\begin{enumerate}
\item $\observables$ is compatible with every cocycle $A \in \cocycles$.
\item Every observable $\xi \in \observables$ satisfies a base-LDT estimate.
\item Every cocycle with finite top LE is uniformly $L^p$-bounded, where $1 < p \le \infty$.  For simplicity of notations we let $p=2$.
\item Every cocycle $A \in \cocycles$ such that $L_1 (A) > L_2 (A)$ satisfies a uniform fiber-LDT estimate.
\end{enumerate}

\subsection{Continuity of the most expanding direction}
\label{direction}
We employ the  Lipschitz estimates on Grassmann manifolds and the avalanche principle derived in Chapter 2 of~\cite{LEbook}, see also~\cite{LEbook-chap2}.

Recall from Subsection~\ref{met} that the most expanding direction of the $n$-th iterate of a cocycle $A \in \cocycles$ defines a partial function 
$$
\medir{n} (A) (x) := \begin{cases}
\mostexp (\An{n} (x)) & \text{ if } \ \rgap (\An{n} (x)) > 1 \\
\text{undefined} & \text{ otherwise}.
\end{cases}
$$

By Proposition~\ref{med:aec}, as $n \to \infty$ the functions $\medir{n} (A) (x)$ converge $\mu$ a.e. to a measurable function $\medir{\infty} (A) \colon X \to \Proj$.

Let $L^1 (X, \Proj)$ be the space of all Borel measurable functions $\filt \colon X \to \Proj$.  Consider the distance 
$$
\dist (\filt_1, \filt_2) := \int_X d ( \filt_1 (x), \filt_2 (x) ) \, \mu (d x)\,,
$$
where the quantity under the integral sign refers to the distance between points in the projective space $\Proj$.

Clearly all the functions $\medir{n} (A)$ are in $L^1 (X, \Proj)$, and by dominated convergence we have that as $n \to \infty$, 
\begin{equation}\label{medir-L1conv}
\medir{n} (A) \to \medir{\infty} (A)  \quad \text{ in } \ L^1 (X, \Proj).
\end{equation}

We will prove that if $L_1 (A) > L_2 (A)$, then locally near $A$, the map $B \mapsto \medir{\infty} (B)$ is continuous with a modulus of continuity depending on the LDT parameter. 

We do so by deriving a {\em quantitative} version of the convergence in \eqref{medir-L1conv}, which moreover is somewhat uniform in phase and cocycle.  This more precise convergence comes as a consequence of the availability of the LDT estimates for our system, as the exceptional sets of phases in the domain of availability of the avalanche principle can be precisely (and uniformly in the cocycle) measured. 

\medskip

Fix a cocycle $A \in \cocycles$ such that $L_1 (A) > L_2 (A)$. Let $\gapc (A) := L_1 (A) - L_2 (A) > 0$ and $\ep_0 := \gapc (A) /100$. What follows is a  bookkeeping of various exceptional sets related to notions from Chapter 3 in~\cite{LEbook}, see also~\cite{LEbook-chap3}. They will eventually define and measure the exceptional sets in the domain of applicability of the avalanche principle (AP) in Proposition~\ref{AP-practical}, for certain sequences of iterates of a cocycle $B$ in a small neighborhood of $A$.

\medskip

Pick for the rest of this subsection $\delta = \delta (A) > 0$, $\nzerobar = \nzerobar(A) \in \N$, $\mesf = \mesf (A) \in \mesfs$ such that, by Lemma 5.1 in~\cite{LEbook-chap3} we have: for all $B \in \cocycles_m$ with $\dist (B, A) < \delta$,
\begin{equation} \label{1-most-dir}
 \rgap (\Bn{n} (x)) > e^{ n \, \gapc (A) /2} =: \frac{1}{\varkappa_n}  \ ( > 1)
\end{equation}
holds for all $n \ge \nzerobar$ and for all $x$ outside a set of measure $ < \mes_n$, and
\begin{equation} \label{2-most-dir}
\abs{ \Lan{n}{1} (B) - \Lan{m}{1} (A) } < \gapc (A) /20 
\end{equation}
holds for all $m, n \ge \nzerobar$.

As before, \eqref{1-most-dir} will ensure the gap condition in the AP, while \eqref{2-most-dir} via Lemma 4.2 in~\cite{LEbook-chap3} will ensure the angle condition.

\smallskip

Fix a cocycle $B$ with $\dist (B, A) < \delta$. We will define, for all scales $n \ge \nzerobar$, the exceptional sets outside which the AP can be applied for various block lengths and configurations of block components.

\smallskip

The exceptional set in the nearly uniform upper semicontinuity of the maximal Lyapunov exponent (Proposition 2.1 in~\cite{LEbook-chap3}) depends only on $A$, and we denote it by $\B_n^{usc} (A)$. Its measure is $\mu \, [  \B_n^{usc} (A) ] < \mes_n$.

Let
$$\B_n^{ldt} (B) := \{ x \in X \colon \abs{ \frac{1}{n} \, \log \norm{ \Bn{n} (x)  } - \Lan{n}{1} (B)  } > \ep_n \}$$
be the exceptional set in the uniform fiber-LDT estimate. Its measure is $\mu \, [  \B_n^{ldt} (B) ] < \mes_n$.

Let
$$\B_n^{g} (B) := \B_n^{ldt} (B) \cup \B_n^{usc} (\wedge_2 A)\,. $$

A simple inspection of the proof of Lemma 2.2 in~\cite{LEbook-chap3} shows that $\B_n^{g} (B)$ is the exceptional set in \eqref{1-most-dir}, and its  measure satisfies $\mu \, [  \B_n^{g} (A) ] \less \mes_n$. Note also that \eqref{1-most-dir} ensures that $ \mostexp ( \Bn{n} (x) )$ is defined (since there is a gap between the two largest singular values). 

Moreover, a simple inspection of the proof of Lemma 4.2 in~\cite{LEbook-chap3}, combined with \eqref{2-most-dir}, shows that for $2 n \ge m_1, m_2 \ge n \ge \nzerobar$ the bound
\begin{equation} \label{3-most-dir}
\frac{\norm{\Bn{m_2+m_1} (x)}}{\norm{\Bn{m_2} (\transl^{m_1} x)} \cdot \norm{\Bn{m_1} (x)}} > e^{- (m_1+m_2) \, \gapc (A)/20} > e^{- n \, \gapc (A)/5} =: \varepsilon_n
\end{equation}
holds provided that
$$x \notin  \B_{m_2+m_1}^{ldt} (B) \cup  \B_{m_1}^{ldt} (B) \cup T^{- m_1} \, \B_{m_2}^{ldt} (B)\,.$$

Note that from  \eqref{1-most-dir} and \eqref{3-most-dir} that 
$\frac{\varkappa_n}{\varepsilon_n^2} = e^{- n \, \gapc(A)/10} < \mes_n  \ll 1\,,$
hence  the condition on 
$\varkappa$ and $\varepsilon$ from the AP is satisfied.

The bound on the distance between most expanding directions in the conclusion of the AP  is 
\begin{equation}\label{kappa over ep}
\frac{\varkappa_n}{\varepsilon_n} = e^{- 3 \, n \, \gapc(A) / 10} < \mes_n\,.
\end{equation}

When using the AP,  we will always apply \eqref{3-most-dir} to configurations for which $n$ is fixed and $m_1 = n$ while $n \le m_2 \le 2 n$. This motivates defining
$$\B_n^a (B) := \bigcup_{n\le m \le 3 n} \, [ \B_m^{ldt} (B) \cup T^{- n} \,  \B_m^{ldt} (B)]\,.$$
Clearly $\mu \, [  \B_n^{a} (B) ] \less n \,  \mes_n$, and if $x \notin \B_n^a (B)$, then the angle condition will be ensured for block components of the kind indicated above.

Let
$$\B_n^{ga} (B) := \B_n^g (B)  \cup \B_n^a (B)\,, $$
so $\mu \, [  \B_n^{ga} (B) ] \less n \,  \mes_n$, and if $x \notin \B_n^{ga} (B)$, both the gap and the angle conditions hold for appropriate  block components at scale $n$.

Let $n_0 \ge \nzerobar$ and $2 n_0 \le n_1 \le \mes_{n_0}^{-1/2}$. If we define
\begin{equation}\label{bad-set-ap-o}
\B_{n_0}^{ap} (B):= \bigcup_{0 \le i < n_0^{-1} \, \mes_{n_0}^{-1/2}} \, T^{- i n_0} \, \B_{n_0}^{ga} (B)\,,
\end{equation}
then $\mu \, [  \B_{n_0}^{ap} (B) ] \less n_0^{-1} \, \mes_{n_0}^{-1/2} \, n_0 \, \mes_{n_0} < \mes_{n_0}^{1/2}$ and if $x \notin \B_{n_0}^{ap} (B)$, then the AP can be applied to a block of length $n_1$ whose $n$ components have lengths $n_0$, except for the last, whose length is the remaining integer $m$ satisfying $n_0 \le m \le 2 n_0$. 

Now define for all $n \ge \nzerobar$ the nested, decreasing sequence of exceptional sets
\begin{equation}\label{bad-set-apn-o}
\B_{n}^\flat (B) := \bigcup_{k \ge n} \, \B_{k}^{ap} (B)\,.
\end{equation}
Clearly
$$ \mu \, [  \B_{n}^{\flat} (B) ]  \le \sum_{k \ge n} \, \mu \, [  \B_{k}^{ap} (B) ]  \less  \sum_{k \ge n} \, \mes_k^{1/2} \less \mes_n^{1/2}$$
and if $ x \notin  \B_{n}^{\flat} (B) $ then for {\em any} scales $n_0, n_1$ such that $n_0 \ge n$ and $2 n_0 \le n_1 \le \mes_{n_0}^{-1/2}$, since $x \notin \B_{n_0}^{ap} (B)$, the AP can be applied to a block of length $n_1$ whose components have lengths $\asymp n_0$.

\begin{remark}\label{bad-set-summary-o}\normalfont
Let us summarize the accounting above. Given a cocycle $A \in \cocycles_m$ with $L_1 (A) > L_2 (A)$, there are parameters $\delta, \nzerobar, \mesf$ depending only on $A$ so that for any cocycle $B \in \cocycles_m$ with $\dist (B, A) < \delta$ and for any scale $k \ge \nzerobar$, there are exceptional sets $\B_{k}^{ap} (B)$ and $ \B_{k}^{\flat} (B) $ of measure $< \mes_k^{1/2}$ such that
\begin{enumerate}
\item  If $n_0 \ge \nzerobar$, $2 n_0 \le  n_1 \le \mes_{n_0}^{-1/2}$ and  if $x \notin \B_{n_0}^{ap} (B)$, then the AP with parameters $\varkappa_{n_0}, \varepsilon_{n_0}$ can be applied to a block $\Bn{n_1} (x)$ of length $n_1$ whose components have lengths $n_0$, except possibly for the last, whose length is between $n_0$ and $2 n_0$. 

\item If $n \ge \nzerobar$ and if $x \notin  \B_{n}^{\flat} (B)$, then for any scales $n_0, n_1$ such that $n_0 \ge n$ and $2 n_0 \le n_1 \le \mes_{n_0}^{-1/2}$,  the AP with parameters $\varkappa_{n_0}, \varepsilon_{n_0}$ can be applied to a block $\Bn{n_1} (x)$ of length $n_1$ whose components have lengths $\asymp n_0$.
\end{enumerate}
\end{remark}

The following results are now easy to phrase and to prove.

\begin{lemma} \label{lemma1-most-dir}
Let $n_1, n_0 \in \N$ such that $n_0 \ge \nzerobar$ and $2 n_0 \le n_1 \le \mes_{n_0}^{-1/2}$.
If $x \notin \B_{n_0}^{ap} (B)$ then
\begin{equation} \label{eq-l1-most-dir}
d ( \mostexp ( \Bn{n_1} (x) ), \, \mostexp ( \Bn{n_0} (x) ) ) <  \frac{\varkappa_{n_0}}{\varepsilon_{n_0}} \,.
\end{equation}
\end{lemma}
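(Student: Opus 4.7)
The plan is to read this off directly from the avalanche principle by using the bookkeeping already done in Remark~\ref{bad-set-summary-o}. The hypothesis $x \notin \B_{n_0}^{ap}(B)$, together with item (1) of that remark, tells us that we can decompose the long iterate $\Bn{n_1}(x)$ as a product of pieces to which Proposition~\ref{AP-practical} is applicable with parameters $\varkappa_{n_0}$ and $\varepsilon_{n_0}$.

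Concretely, I would write $n_1 = (k-1)\,n_0 + m$, where $k\geq 2$ is chosen so that $n_0 \le m < 2n_0$; this is possible since $n_1 \ge 2n_0$. Set
\begin{equation*}
g_i := \Bn{n_0}(T^{i\,n_0} x)\;\; \text{for } 0\le i \le k-2, \qquad g_{k-1} := \Bn{m}(T^{(k-1)n_0} x),
\end{equation*}
so that $g_{k-1}\cdots g_1 g_0 = \Bn{n_1}(x)$. By the construction of the exceptional set $\B_{n_0}^{ap}(B)$ in~\eqref{bad-set-ap-o}, none of the iterated base points $T^{i\,n_0} x$ lie in $\B_{n_0}^{ga}(B)$, and since $n_1 \le \mes_{n_0}^{-1/2}$ the range of indices $i$ stays within the union that defines $\B_{n_0}^{ap}(B)$. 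The ``$g$''-part of $\B_{n_0}^{ga}$ then yields the gap hypothesis $\rgap(g_i) > 1/\varkappa_{n_0}$ for every $i$, by~\eqref{1-most-dir}, while the ``$a$''-part ensures~\eqref{3-most-dir} for every consecutive pair $(g_{i-1}, g_i)$, giving the angle bound with parameter $\varepsilon_{n_0}$. The smallness condition $\varkappa_{n_0}/\varepsilon_{n_0}^2 \ll 1$ needed by the AP is automatic by~\eqref{kappa over ep}.

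Having verified all the hypotheses, Proposition~\ref{AP-practical} applies to $g_0, g_1, \ldots, g_{k-1}$, and its conclusion yields
\begin{equation*}
d\bigl(\mostexp(g^{(k)}),\,\mostexp(g_0)\bigr) \;\lesssim\; \varkappa_{n_0}\,\varepsilon_{n_0}^{-1}.
\end{equation*}
Since $g^{(k)} = \Bn{n_1}(x)$ and $g_0 = \Bn{n_0}(x)$, this is exactly~\eqref{eq-l1-most-dir}.

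There is no real obstacle here: all the work has already been packaged into the definition of $\B_{n_0}^{ap}(B)$ and the uniform estimates~\eqref{1-most-dir}--\eqref{3-most-dir}. The only point requiring some care is making sure the chosen block decomposition (with one possibly longer tail piece of length $m \in [n_0, 2n_0)$) still satisfies both the gap bound at scale $m$ and the angle bound between the $m$-block and its $n_0$-block neighbour; both are covered by the definition of $\B_n^{ga}(B)$ and the range $n \le m \le 3n$ used in $\B_n^a(B)$, which is precisely why that range was taken in the first place.
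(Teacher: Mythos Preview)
Your proof is correct and follows essentially the same approach as the paper: decompose $\Bn{n_1}(x)$ into blocks of length $n_0$ with one tail block of length $m\in[n_0,2n_0)$, then invoke Remark~\ref{bad-set-summary-o} (or equivalently unpack the definitions of $\B_{n_0}^{ap}$, $\B_{n_0}^{ga}$) to verify the AP hypotheses and conclude via Proposition~\ref{AP-practical}. The only quibble is that the smallness condition $\varkappa_{n_0}/\varepsilon_{n_0}^2 \ll 1$ is stated in the text just \emph{before}~\eqref{kappa over ep} rather than in~\eqref{kappa over ep} itself, which records $\varkappa_n/\varepsilon_n$.
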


\begin{proof}
Consider the block $ \Bn{n_1} (x)$ and break it down into $n-1$ many blocks of length $n_0$ each, and a remaining block of length $m$ with $n_0 \le m < 2 n_0$. In other words, write $n_1 = (n-1) \, n_0 + m$, for some $n_0 \le m < 2 n_0$ and define
$$g_i = g_i (x) := \Bn{n_0} (\transl^{i \, n_0} \, x)$$
for $0 \le i \le n-2$, and
$$g_{n-1} = g_{n-1} (x) := \Bn{m} (\transl^{(n-1) \, n_0} \, x)\,.$$
Then  $$g^{(n)} = g_{n-1} \cdot \ldots \cdot g_1 \cdot g_0 = \Bn{n_1} (x)\,,$$
$$g_i \cdot g_{i-1} = \Bn{2 n_0} (\transl^{(i-1) n_0} \, x) $$ for $1 \le i \le n-2$, while 
$$g_{n-1} \cdot g_{n-2} = \Bn{m+n_0} (\transl^{(n-2) n_0} \, x)\,.$$

Since $x \notin \B_{n_0}^{ap} (B)$, we are in the setting described in Remark~\ref{bad-set-summary-o}, hence the AP (Proposition~\ref{AP-practical})  applies and we have:
\begin{align*}
d ( \mostexp ( \Bn{n_1} (x) ), \, \mostexp ( \Bn{n_0} (x) ) )  = d ( \mostexp (g^{(n)}), \, \mostexp (g_0) ) \less \frac{\varkappa_{n_0}}{\varepsilon_{n_0}}\,. \qquad    
\end{align*}
\end{proof}

\begin{lemma} \label{lemma2-most-dir}
For all $n_0 \ge \nzerobar$, \ $m \ge n_0^{2+}$, if $x \notin \B_{n_0}^{\flat} (B)$ then
\begin{equation} \label{eq-l1-most-dir}
d  ( \mostexp ( \Bn{m} (x) ), \, \mostexp ( \Bn{n_0} (x) ) ) \less  \frac{\varkappa_{n_0}}{\varepsilon_{n_0}}\,.
\end{equation}
\end{lemma}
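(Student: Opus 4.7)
The plan is to iterate Lemma~\ref{lemma1-most-dir} along a telescoping chain of intermediate scales interpolating between $n_0$ and $m$. The hypothesis $x \notin \B_{n_0}^{\flat}(B) = \bigcup_{k \ge n_0} \B_k^{ap}(B)$ is exactly what is needed: it guarantees that for \emph{every} scale $k \ge n_0$ the AP is applicable to $\Bn{\cdot}(x)$ in the configurations required by Lemma~\ref{lemma1-most-dir}, in particular at each intermediate scale we will select.

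Concretely, I would build a sequence $n_0 < n_1 < \ldots < n_s = m$ such that each pair $(n_i, n_{i+1})$ lies in the window required by Lemma~\ref{lemma1-most-dir}, namely $2 n_i \le n_{i+1} \le \mes_{n_i}^{-1/2}$. Since the deviation measure functions satisfy $\mes_n < n^{-10}$, one has $\mes_{n_i}^{-1/2} \ge n_i^{5} \gg 2 n_i$, so this window is substantial. A natural choice is $n_{i+1} := \min(\lfloor \mes_{n_i}^{-1/2} \rfloor, m)$, which yields super-geometric growth $n_{i+1} \ge n_i^{5}$ and therefore reaches $m$ in $s = O(\log \log m)$ steps. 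The hypothesis $m \ge n_0^{2+}$ only serves to guarantee that this chain is nontrivial.

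With the chain in place, Lemma~\ref{lemma1-most-dir} at each link together with the triangle inequality gives
$$d(\mostexp(\Bn{m}(x)), \mostexp(\Bn{n_0}(x))) \le \sum_{i=0}^{s-1} d(\mostexp(\Bn{n_{i+1}}(x)), \mostexp(\Bn{n_i}(x))) \less \sum_{i=0}^{s-1} \frac{\varkappa_{n_i}}{\varepsilon_{n_i}}.$$
By \eqref{kappa over ep}, $\varkappa_{n_i}/\varepsilon_{n_i} = e^{-3 n_i \gapc(A)/10}$, and since the $n_i$ grow at least geometrically, this geometric-type sum is dominated by its first term:
$$\sum_{i=0}^{s-1} e^{-3 n_i \gapc(A)/10} \less e^{-3 n_0 \gapc(A)/10} = \frac{\varkappa_{n_0}}{\varepsilon_{n_0}},$$
which is the desired bound.

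The main obstacle is confirming that the chain behaves as advertised at both ends: that $n_{i+1}$ can always be chosen inside $[2 n_i, \mes_{n_i}^{-1/2}]$, and that the final link $n_{s-1} \to n_s = m$ still respects the upper constraint $m \le \mes_{n_{s-1}}^{-1/2}$ of Lemma~\ref{lemma1-most-dir}. Both points are settled by the power-law lower bound on $\mesf$ (which provides ample room and forces super-geometric growth) together with the definition $n_{i+1} := \min(\lfloor \mes_{n_i}^{-1/2} \rfloor, m)$, which makes the procedure terminate exactly at $m$ as soon as $m$ enters the accessible window from $n_{s-1}$.
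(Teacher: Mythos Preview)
Your approach is essentially the same as the paper's: build a telescoping chain of scales, apply Lemma~\ref{lemma1-most-dir} at each link, and sum the resulting exponentially decaying terms. The paper implements the chain via the squaring map $\psi(t)=t^2$ and overlapping intervals $\scale_{k+1}=\psi(\scale_k)$, constructing the intermediate scales \emph{backwards} from $m$; this sidesteps a boundary issue that your forward construction does not quite handle.

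Concretely, with your rule $n_{i+1}:=\min(\lfloor \mes_{n_i}^{-1/2}\rfloor,\,m)$, the terminal link can fail the lower constraint of Lemma~\ref{lemma1-most-dir}. Suppose the chain has not stopped by step $s-1$, so $m>\lfloor \mes_{n_{s-2}}^{-1/2}\rfloor=n_{s-1}$; this gives only $m>n_{s-1}$, not $m\ge 2n_{s-1}$. If $m$ happens to lie in $(n_{s-1},\,2n_{s-1})$ you then set $n_s=m$, but Lemma~\ref{lemma1-most-dir} requires $n_s\ge 2n_{s-1}$ and is not applicable. The fix is easy---take more moderate jumps such as $n_{i+1}\asymp n_i^2$, so that consecutive admissible windows $[2n_i,\,\mes_{n_i}^{-1/2}]\supset[2n_i,\,n_i^{5}]$ overlap and $m$ necessarily lands in one of them, at which point you terminate---and this is exactly what the paper's overlapping-interval scheme accomplishes. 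Once the chain is repaired, your summation argument is correct and identical to the paper's.
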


\begin{proof}
Fix $0 < c \ll 1$. Let $\psi (t) := t^2$. 

Define inductively the following intervals of scales $\scale_0 := [ n_0^{1+c}, \, n_0^{3+c} ] \subset [ 2 n_0, \, \mes_{n_0}^{-1/2} ]$,  
$\scale_1 := \psi (\scale_0) =  [ n_0^{2+2c}, \, n_0^{6+2c} ]$ and for all $k \ge 0$, $\scale_{k+1} := \psi (\scale_k)$. 

These intervals overlap, so they cover up all integers $ \ge  n_0^{1+c}$.

Let $m \ge  n_0^{2+2c}$. Then there is $k \ge 0$ such that $m = m_{k+1} \in \scale_{k+1}$, and so $m_{k+1} \asymp m_k^2$ for some $m_k \in \scale_k$.  In fact, there is a backward "orbit'' of integers $m_0 \in \scale_0, m_1 \in \scale_1, \ldots,  m_k \in \scale_k$ such that $m_{j+1} \asymp m_j^2$.

For any $0 \le j \le k$, since $m_j \ge m_0 \ge n_0$, and since $x \notin \B_{n_0}^{\flat} (B)$, by \eqref{bad-set-apn-o} we  have $x \notin \B_{m_j}^{ap} (B)$. Moreover, since $m_{j+1} \asymp m_j^2$, we have $2 m_j \le m_{j+1} \le \mes_{m_j}^{-1/2}$, hence Lemma~\ref{lemma1-most-dir} is applicable to the scales  $m_j, m_{j+1}$ and we get
\begin{equation} \label{10-most-dir}
d  ( \mostexp ( \Bn{m_{j+1}} (x) ), \, \mostexp ( \Bn{m_{j}} (x) ) ) <  \frac{\varkappa_{m_j}}{\varepsilon_{m_{j}}}\,.
\end{equation}

Moreover, since $\mes_{n_0}^{-1/2} \ge m_0 \ge n_0^{1+c} \ge 2 n_0$, and since $x \notin \B_{n_0}^{\flat} (B)$, hence $x \notin \B_{n_0}^{ap} (B)$, Lemma~\ref{lemma1-most-dir} is applicable also at scales $m_0, n_0$, and we have
\begin{equation} \label{11-most-dir}
d ( \mostexp ( \Bn{m_{0}} (x) ), \, \mostexp ( \Bn{n_{0}} (x) ) ) <  \frac{\varkappa_{n_0}}{\varepsilon_{n_0}}\,.
\end{equation}

Using \eqref{11-most-dir}, \eqref{10-most-dir} and the triangle inequality we get
\begin{align*}
& d ( \mostexp ( \Bn{m} (x) ), \, \mostexp ( \Bn{n_0} (x) ) )  \\
&  \qquad  \le d ( \mostexp ( \Bn{m_{0}} (x) ), \, \mostexp ( \Bn{n_{0}} (x) ) ) \\
& \qquad + \sum_{j=0}^k \, d ( \mostexp ( \Bn{m_{j+1}} (x) ), \, \mostexp ( \Bn{m_{j}} (x) ) ) \\
& \qquad \le  \frac{\varkappa_{n_0}}{\varepsilon_{n_0}} + \sum_{j=0}^k \,  \frac{\varkappa_{m_j}}{\varepsilon_{m_{j}}} \less \frac{\varkappa_{n_0}}{\varepsilon_{n_0}}\,.
\end{align*}
The last inequality holds because by  \eqref{kappa over ep} we have $\frac{\varkappa_n}{\varepsilon_n} = e^{- 3 \, n \, \gapc(A) / 10}$, hence the series above converges rapidly, and so its  sum  is comparable with the first term. 
\end{proof}

From Proposition~\ref{med:aec} and by the proximity of the cocycle $B$ to $A$, we already know that its most expanding direction of $\medir{\infty} (B) (x)$ is well defined as the $\mu$-a.e. limit as $n\to\infty$ of the finite scale most expanding direction $\medir{n} (B) (x)$. We prove a quantitative version of this convergence.

\begin{proposition}[speed of convergence] \label{prop-speed-conv-most-dir}
For all $n \ge \nzerobar$, if $x \notin \B_n^\flat (B)$, hence for $x$ outside a set of measure $ < \mes_n^{1/2}$, we have
\begin{equation} \label{speed-conv-p-most-dir}
d  ( \mostexp ( \Bn{n} (x) ), \, \medir{\infty} (B) (x) ) < \frac{\varkappa_n}{\varepsilon_n}    = e^{- 3 \, n \gapc(A) / 10} <  \mes_n\,.
\end{equation}
Moreover,
\begin{equation} \label{speed-conv-most-dir}
\dist ( \medir{n} (B), \, \medir{\infty} (B)) < \mes_n^{1/2}.
\end{equation}
\end{proposition}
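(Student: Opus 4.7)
The plan is to deduce the pointwise bound \eqref{speed-conv-p-most-dir} by passing to the limit in Lemma~\ref{lemma2-most-dir}, and then to obtain the $L^1$-estimate \eqref{speed-conv-most-dir} by splitting $X$ along the exceptional set $\B_n^\flat(B)$.

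For \eqref{speed-conv-p-most-dir}, fix $n\geq \nzerobar$ and a point $x\notin \B_n^\flat(B)$. Proposition~\ref{med:aec} applied to $B$ (which still satisfies $L_1(B)>L_2(B)$ by the continuity of the LE, so that $\medir{\infty}(B)$ is well defined) ensures that, for $\mu$-a.e.~$x$,
$$\mostexp(\Bn{m}(x))\to \medir{\infty}(B)(x) \quad \text{as } m\to \infty.$$
Discarding a $\mu$-null set we may assume this holds at our chosen $x$. For every $m\geq n^{2+}$, Lemma~\ref{lemma2-most-dir} gives
$$d(\mostexp(\Bn{m}(x)),\, \mostexp(\Bn{n}(x))) \lesssim \frac{\varkappa_n}{\varepsilon_n}.$$
Letting $m\to \infty$ and invoking continuity of $d$ on $\Proj$ yields
$$d(\mostexp(\Bn{n}(x)),\, \medir{\infty}(B)(x)) \lesssim \frac{\varkappa_n}{\varepsilon_n} = e^{-3n\gapc(A)/10},$$
which by \eqref{kappa over ep} is bounded by $\mes_n$.

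For \eqref{speed-conv-most-dir}, note first that on $X\setminus \B_n^\flat(B) \subset X\setminus \B_n^g(B)$ the gap $\rgap(\Bn{n}(x))>1$ holds, so $\medir{n}(B)(x)=\mostexp(\Bn{n}(x))$ is well defined. Writing
\begin{align*}
\dist(\medir{n}(B), \medir{\infty}(B)) &= \int_{X\setminus \B_n^\flat(B)} d(\medir{n}(B)(x),\medir{\infty}(B)(x))\,d\mu(x)\\
&\quad + \int_{\B_n^\flat(B)} d(\medir{n}(B)(x),\medir{\infty}(B)(x))\,d\mu(x),
\end{align*}
the first integral is $\lesssim \mes_n \cdot \mu(X) = \mes_n$ by \eqref{speed-conv-p-most-dir}, while the second is bounded by $\mathrm{diam}(\Proj)\cdot \mu(\B_n^\flat(B)) \lesssim \mes_n^{1/2}$ (on the bad set we may redefine $\medir{n}(B)$ arbitrarily if undefined, as it is a set of measure $<\mes_n^{1/2}$). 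Combining these gives $\dist(\medir{n}(B),\medir{\infty}(B))\lesssim \mes_n^{1/2}$.

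The only subtle point is handling the a.e.~Cauchy nature of $\medir{n}(B)$: one must verify that the null set where Proposition~\ref{med:aec} fails is compatible with our chosen good set $X\setminus \B_n^\flat(B)$, which is immediate after discarding a measure-zero subset. The quantitative bounds \eqref{1-most-dir}–\eqref{kappa over ep} do the rest with no essential new work.
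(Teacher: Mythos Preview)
Your proof is correct and follows essentially the same approach as the paper: the paper's own proof simply says that \eqref{speed-conv-p-most-dir} follows by taking the limit $m\to\infty$ in Lemma~\ref{lemma2-most-dir} with $n_0=n$, and that \eqref{speed-conv-most-dir} follows by integration in $x$. Your version fills in the details (invoking Proposition~\ref{med:aec} to justify the limit, and splitting the integral over the good and bad sets) in exactly the way the paper's terse argument intends.
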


\begin{proof}
Estimate \eqref{speed-conv-p-most-dir} follows directly by taking the limit in lemma~\ref{lemma2-most-dir} with $n_0 = n$ and $m \to \infty$.
The second estimate follows by integration in $x$.
\end{proof}

\begin{proposition}[finite scale continuity] \label{finite-scale-cont-most-dir}
Given $C_2 > 0$, there is a constant $C_1 = C_1 (A, C_2) < \infty$, such that for any $B_1, B_2 \in \cocycles_m$ with $\dist (B_i, A) < \delta$, where $i = 1, 2$, if $n \ge \nzerobar$ and if $\dist (B_1, B_2) < e^{- C_1 \, n}$, then for  $x$ outside a set of measure $ < \mes_n$ we have
\begin{equation}\label{eq:-finite-scale-Oseledets-p}
\dist (\mostexp ( \Bn{n}_1 (x) ), \, \mostexp ( \Bn{n}_2 (x) ) ) < e^{- C_2 n } < \mes_n\,.
\end{equation}
Moreover,
\begin{equation}\label{eq:-finite-scale-Oseledets}
\dist ( \medir{n} (B_1), \medir{n} (B_2) ) < \mes_n\,.
\end{equation}
\end{proposition}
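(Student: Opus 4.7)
\smallskip

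\textbf{Proof plan.} The strategy is to work on the good set where both $B_1^{(n)}(x)$ and $B_2^{(n)}(x)$ have a large singular gap, and to compare their most expanding directions via a matrix perturbation estimate. The smallness of $\dist(B_1,B_2)$ at scale $n$ propagates to smallness of $B_1^{(n)}(x)-B_2^{(n)}(x)$ at scale $e^{O(n)}$, and this is still tiny compared with the norm $\norm{B_i^{(n)}(x)}$ provided the constant $C_1$ is taken large enough.

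\textbf{Step 1 (Telescoping perturbation of iterates).} Since $\dist$ dominates the $L^\infty$ distance and since the $L^\infty$-balls in $\cocycles_m$ around $A$ consist of uniformly bounded cocycles, there exists $M=M(A,\delta)\geq 1$ such that $\norm{B_i(x)}\leq M$ for $\mu$-a.e.\ $x$ and $i=1,2$. A standard telescoping argument (write $B_1^{(n)}-B_2^{(n)}$ as a sum of $n$ products differing in one factor) yields
$$\norm{B_1^{(n)}(x)-B_2^{(n)}(x)}\leq n\,M^{n-1}\,\norm{B_1-B_2}_{L^\infty}\leq n\,M^{n-1}\,\dist(B_1,B_2).$$
Hence $\dist(B_1,B_2)<e^{-C_1 n}$ gives $\norm{B_1^{(n)}(x)-B_2^{(n)}(x)}\leq e^{n(\log M - C_1)+\log n}$ uniformly in $x$.

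\textbf{Step 2 (Good set and gap/norm lower bounds).} Define the exceptional set
$$\B_n := \B_n^{g}(B_1)\cup\B_n^{g}(B_2)\cup\B_n^{ldt}(B_1)\cup\B_n^{ldt}(B_2),$$
which, by the bookkeeping in the paragraph preceding Remark~\ref{bad-set-summary-o}, has measure $\less \mes_n$. For $x\notin\B_n$, estimate \eqref{1-most-dir} gives $\rgap(B_i^{(n)}(x))>e^{n\gapc(A)/2}$, while \eqref{2-most-dir} and the fiber-LDT give
$$\norm{B_i^{(n)}(x)}\geq e^{n(L_1(A)-\gapc(A)/10)}.$$

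\textbf{Step 3 (Lipschitz estimate for the most expanding direction).} Apply the Lipschitz estimate from Chapter 2 of \cite{LEbook} (analogous to Wedin's $\sin\theta$ theorem): whenever $g_1,g_2\in\gl(m,\R)$ satisfy $\rgap(g_1)>2$ and $\norm{g_1-g_2}\leq\tfrac12\,s_1(g_1)$,
$$d(\mostexp(g_1),\mostexp(g_2))\lesssim \frac{\norm{g_1-g_2}}{s_1(g_1)-s_2(g_1)}\lesssim \frac{\norm{g_1-g_2}}{\norm{g_1}}.$$
Applying this with $g_i=B_i^{(n)}(x)$ for $x\notin\B_n$, and combining Steps 1 and 2,
$$d(\mostexp(B_1^{(n)}(x)),\mostexp(B_2^{(n)}(x)))\lesssim \frac{n\,M^{n-1}\,\dist(B_1,B_2)}{e^{n(L_1(A)-\gapc(A)/10)}}\leq e^{-n(C_1 - \log M - |L_1(A)| - 1)}.$$

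\textbf{Step 4 (Choice of $C_1$ and integration).} Choose $C_1:=C_2+\log M+|L_1(A)|+2$, which depends only on $A$ and $C_2$. Then the pointwise bound \eqref{eq:-finite-scale-Oseledets-p} holds for $x\notin\B_n$, whose measure is $<\mes_n$. For the $L^1$ statement \eqref{eq:-finite-scale-Oseledets}, integrate: the good set contributes $<e^{-C_2 n}<\mes_n$, while the complement has measure $<\mes_n$ and the integrand is universally bounded by $\pi/2$. Thus $\dist(\medir{n}(B_1),\medir{n}(B_2))\less \mes_n$, up to adjusting the constant $C_1$ and using the polynomial decay of $\mes_n$.

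\textbf{Expected obstacle.} The only nontrivial ingredient is the quantitative Lipschitz dependence of $\mostexp$ on the matrix with an explicit constant in terms of the singular gap; the rest is bookkeeping on the exceptional sets and a tuning of $C_1$ against the deterministic growth $M^n$ of the telescoping bound and the probabilistic lower bound on $\norm{B_i^{(n)}}$.
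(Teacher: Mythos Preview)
Your proof is correct and follows essentially the same approach as the paper's: both exclude the set $\B_n^{g}(B_1)\cup\B_n^{g}(B_2)$ (your extra $\B_n^{ldt}$ sets are already contained in $\B_n^{g}$), use the telescoping bound $\norm{B_1^{(n)}-B_2^{(n)}}\le n\,M^{n-1}\dist(B_1,B_2)$, combine it with the fiber-LDT lower bound on $\norm{B_i^{(n)}(x)}$, and then apply the Lipschitz estimate for $\mostexp$ in terms of the relative distance (what you call a Wedin-type bound is exactly Proposition~3.18 of \cite{LEbook-chap2}). The only cosmetic difference is that the paper packages the norm lower bound as $\norm{g_i}>e^{-C_0 n}$ for an unspecified $C_0(A)$, whereas you write the explicit exponent $L_1(A)-\gapc(A)/10$; both lead to the same choice of $C_1$ up to harmless constants.
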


\begin{proof}
To prove \eqref{eq:-finite-scale-Oseledets-p} we will use the Lipschitz continuity of the most expanding singular direction in Proposition 3.18 from~\cite{LEbook-chap2}.
 
 Put $g_i = g_i (x) := \Bn{n}_i (x)$, \ $i = 1, 2$. 
 
Let $x \notin \B_n^{g} (B_1) \cup \B_n^{g} (B_2)$, which is a set of measure $ \less \mes_n$.  We show that the assumptions of Proposition 3.18 in~\cite{LEbook-chap2} hold for all such $x$.

Firstly note that by \eqref{1-most-dir} we have
$  \rgap (g_i) = \rgap (\Bn{n}_i (x)) >  \frac{1}{\kappa_n} > 1,$
so in particular $\mostexp ( \Bn{n}_i (x) )$ are well defined.

Moreover, the fiber-LDT estimate applies to $B_i$ and we have
$$\frac{1}{n} \, \log \norm{\Bn{n}_i (x) } > \Lan{n}{1} (B_i) - \ep_n > \Lan{n}{1} (A) - \frac{\gapc (A)}{20} - \frac{\gapc (A)}{100} > - C_0,$$
where we used  \eqref{2-most-dir} in the estimate above, and $C_0 = C_0 (A) < \infty$.

Then
$$ \norm{g_i} = \norm{\Bn{n}_i (x) }  > e^{- C_0 \, n}.$$

Since for $\mu$ almost every $x$, $\norm{B_i (x)} < C(A) < \infty$, by possibly increasing $C_0$, we may also assume that 
$$\norm{g_i} = \norm{\Bn{n}_i (x) } < e^{C_0 \, n}.$$

Moreover, assuming $\dist (B_1, B_2) < e^{- C_1 \, n}$, with $C_1$ to be chosen later,
\begin{align*}
\norm{ g_1 - g_2 } = \norm{ \Bn{n}_1 (x) - \Bn{n}_2 (x)  } \le n \, C_0^{n-1} \, \dist (B_1, B_2) < e^{- (C_1 - 2 \log C_0) \, n}.
\end{align*}

Then 
$$ \drel(g_1, g_2):=\frac{ \norm{g_1-g_2} }{ \max\{ \norm{g_1}, \norm{g_2} \}} \le \frac{e^{- (C_1 - 2 \log C_0) \, n}}{e^{-C_0 n}} < e^{- C_2 \, n} \ll 1\,,$$
provided we choose $C_1 > 2 \log C_0 + C_0 + C_2$.

Proposition 3.18 in~\cite{LEbook-chap2} applies, and we conclude:
$$d ( \mostexp (g_1), \mostexp (g_2) ) \leq \frac{16}{1-\kappa_n^2}\, \drel(g_1,g_2)  \less e^{- C_2 n}. $$

This proves \eqref{eq:-finite-scale-Oseledets-p}, while \eqref{eq:-finite-scale-Oseledets} follows by integration in $x$.

\end{proof}

We are now ready to formulate and to prove the continuity of the most expanding direction.

\begin{theorem}\label{thm:most-dir}
Let $A \in \cocycles_m$ with $L_1 (A) > L_2 (A)$. There are $\delta > 0$, $\mesf \in \mesfs$, $c  > 0$, $\alpha > 0$, all depending only on $A$, such that for any cocycles $B_1, B_2 \in \cocycles_m$ with $\dist (B_i, A) < \delta$, where $i = 1, 2$,  we have:
\begin{align*} \label{dir cont pointwise}
 \mu  \ \{  x\in X \colon d ( \medir{\infty} (B_1) (x), \  \medir{\infty} (B_2) (x) ) > \dist (B_1, B_2)^\alpha \}  
 < \omega_{\mesf} (\dist (B_1, B_2))
\end{align*}
where $\omega_{\mesf} (h) := [ \mesf \, (c \, \log  ( 1/ h ) ) ]^{1/2}$ is a modulus of continuity function, and clearly $\omega_{\mesf} (h) \to 0$ as $h \to 0$.

Moreover,
\begin{equation} \label{Oseledets:modulus-cont} 
\dist (\medir{\infty}  (B_1), \medir{\infty}  (B_2) ) <  \omega_{\mesf} (\dist (B_1, B_2))\,.
\end{equation}
\end{theorem}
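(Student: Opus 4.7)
The strategy is to combine the speed-of-convergence estimate (Proposition~\ref{prop-speed-conv-most-dir}) with the finite-scale continuity estimate (Proposition~\ref{finite-scale-cont-most-dir}) through a standard telescoping argument, balancing the scale $n$ against $h:=\dist(B_1,B_2)$.

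Let $\delta,\nzerobar,\mesf,C_1,C_2$ be the parameters associated to $A$ by those propositions (with $C_2$ chosen large, say $C_2=1$). Given $B_1,B_2$ with $\dist(B_i,A)<\delta$, set $h:=\dist(B_1,B_2)$ and choose
\[
n:=\bigl\lfloor c\,\log(1/h)\bigr\rfloor, \qquad c:=1/(2C_1),
\]
so that $n\ge\nzerobar$ (by shrinking $\delta$ if needed) and $h<e^{-C_1 n}$. Then Proposition~\ref{finite-scale-cont-most-dir} applies to $B_1,B_2$ at scale $n$, and we define the exceptional set
\[
\mathcal{E}(B_1,B_2):=\B_n^{\flat}(B_1)\cup\B_n^{\flat}(B_2)\cup\mathcal{E}_n(B_1,B_2),
\]
where $\mathcal{E}_n(B_1,B_2)$ is the set of size $<\mes_n$ outside of which \eqref{eq:-finite-scale-Oseledets-p} holds. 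Its total measure is $\less \mes_n^{1/2}$.

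For $x\notin\mathcal{E}(B_1,B_2)$, we apply the triangle inequality:
\begin{align*}
d\bigl(\medir{\infty}(B_1)(x),\medir{\infty}(B_2)(x)\bigr)
&\le d\bigl(\medir{\infty}(B_1)(x),\mostexp(\Bn{n}_1(x))\bigr) \\
&\quad + d\bigl(\mostexp(\Bn{n}_1(x)),\mostexp(\Bn{n}_2(x))\bigr) \\
&\quad + d\bigl(\mostexp(\Bn{n}_2(x)),\medir{\infty}(B_2)(x)\bigr).
\end{align*}
By \eqref{speed-conv-p-most-dir} the first and third terms are each $<e^{-3n\gapc(A)/10}$, and by \eqref{eq:-finite-scale-Oseledets-p} the middle term is $<e^{-C_2 n}$. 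Hence, with $C:=\min\{3\gapc(A)/10,\,C_2\}$,
\[
d\bigl(\medir{\infty}(B_1)(x),\medir{\infty}(B_2)(x)\bigr)\less e^{-Cn}\less h^{\,cC}=h^\alpha,
\]
where $\alpha:=cC/2>0$ (say). This establishes the pointwise statement, with the exceptional set measured by $\mu[\mathcal{E}(B_1,B_2)]\less \mes_n^{1/2}=[\mesf(c\log(1/h))]^{1/2}=\omega_{\mesf}(h)$.

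The $L^1$ bound \eqref{Oseledets:modulus-cont} follows by integration: since the distance on $\Proj$ is bounded (by, say, $\pi/2$),
\[
\dist(\medir{\infty}(B_1),\medir{\infty}(B_2))\le h^\alpha+\tfrac{\pi}{2}\,\mu[\mathcal{E}(B_1,B_2)]\less h^\alpha+\omega_{\mesf}(h)\less\omega_{\mesf}(h),
\]
since $\omega_{\mesf}(h)$ decays only polynomially in $1/\log(1/h)$, while $h^\alpha$ is much smaller. The only subtlety is to verify that $n\ge\nzerobar$ for $h$ sufficiently small (shrink $\delta$) and that all the parameters $c,\alpha,\mesf,\delta$ depend only on $A$; this follows by inspection, since $C_1,C_2,\gapc(A),\nzerobar,\mesf$ all depend only on $A$.
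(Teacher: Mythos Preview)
Your proof is correct and follows essentially the same approach as the paper: choose the scale $n\asymp\log(1/h)$ so that the finite-scale continuity (Proposition~\ref{finite-scale-cont-most-dir}) applies, combine it via the triangle inequality with the speed-of-convergence estimate (Proposition~\ref{prop-speed-conv-most-dir}) applied to each $B_i$, and bound the union of exceptional sets by $\mes_n^{1/2}$. Your justification that $h^\alpha\less\omega_{\mesf}(h)$ is phrased slightly loosely (the relevant fact is that $\mesf$ decays at most exponentially, so $\omega_{\mesf}(h)$ decays at most like a power of $h$), but this matches the paper's own level of detail on that point.
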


\begin{proof} Fix any $C_2 > 0$ and let  $C_1$ be the constant in Proposition~\ref{finite-scale-cont-most-dir}.

Put $\dist (B_1, B_2) =: h$ and choose $n \in \N$ such that $h \asymp e^{- C_1 \, n}$.
Since $h \le  2 \delta$ and $n \asymp 1/C_1 \, \log 1/h$, by taking $\delta$ small enough we may assume that $n \ge \nzerobar$.

Apply Proposition~\ref{finite-scale-cont-most-dir} to get that for $x$ outside a set of measure $< \mes_n$,
\begin{equation} \label{eq100-most-dir}
\dist (\mostexp ( \Bn{n}_1 (x) ), \, \mostexp ( \Bn{n}_2 (x) ) ) < e^{- C_2 n } \,.
\end{equation}

Now apply Proposition~\ref{prop-speed-conv-most-dir} to $B= B_i$, $i=1, 2$, to get that for $x$ outside a set of measure $< \mes_n^{1/2}$,
\begin{equation} \label{eq101-most-dir}
d  ( \mostexp ( \Bn{n}_i (x) ), \, \medir{\infty} (B_i) (x) ) < e^{- 3 \, n \gapc(A) / 10}. 
\end{equation}

Combine \eqref{eq100-most-dir} and \eqref{eq101-most-dir} to conclude that for $x$ outside a set of measure $\less  \mes_n^{1/2}$ and for  $c_0 < \min \{ C_2, 3 \gapc(A) /10 \}$, we have
$$
d ( \medir{\infty} (B_1) (x), \  \medir{\infty} (B_2) (x) ) < e^{- c_0 \, n} = h^\alpha\,,
$$
where $\alpha = \frac{c_0}{C_1}$. This proves the pointwise estimate. 

To prove \eqref{Oseledets:modulus-cont}, simply integrate in $x$ and take into account the fact that since the large deviation measure function $\mesf$ decays at most exponentially, the corresponding modulus of continuity function $\omega_{\mesf}$ will decay at most like a power of $h$, so we may assume $h^\alpha < \omega_{\mesf} (h)$ as $h \to 0$.

\end{proof}

\subsection{Spaces of measurable  filtrations and decompositions}
\label{smf}
We introduce a space of measurable filtrations, i.e. a space of functions from the phase space to the set of all flags.  Thus the Oseledets filtration of a linear cocycle is an element of this space. We endow the space of measurable filtrations with a natural topology. Similarly, we define a space of measurable decompositions.

\medskip

We start with an example that will  motivate the formalism below. Let $A$ be a linear cocycle with {\em exact} gap pattern say $\tau = (2, 3)$, that is,
$$L_1 (A) = L_2 (A) > L_3 (A) > L_4 (A) = \ldots = L_m (A).$$

The Oseledets filtration of $A$ is a $\tau^\perp = (m-3, m-2)$-flag  
$$\{0\}= F_{4} (A) (x) \subsetneq F_3 (A) (x)
 \subsetneq F_{2} (A) (x) \subsetneq F_1 (A) (x) = \R^m\,,$$
 for $\mu$-a.e. $x \in X$, thus defining a measurable function $F (A) \colon X \to \FF_{\tau^\perp} (\R^m)$.
 
The growth rate of the iterates of $A$ along vectors in $F_{3} (A) (x)$ is $L_4 (A)$ or less, the growth rate along vectors in $F_{2} (A) (x)$ is $L_3 (A)$ or less and the growth rate along vectors in $F_{1} (A) (x)$ is $L_1 (A)$ or less.
 
 By the continuity of the Lyapunov exponents of a linear cocycle (which holds under the assumptions in this section), if $B$ is a small perturbation of $A$, then 
 $$L_1 (B) \ge L_2 (B) > L_3 (B) > L_4 (B) \ge \ldots \ge L_m (B),$$
 meaning that $B$ will still have a $\tau = (2, 3)$ gap pattern. 
 However, this might not be its {\em exact} gap pattern, as we could have $L_1 (B) > L_2 (B)$, leading to a {\em finer} gap pattern, say $\tau' = (1, 2, 3)$. 
  If $\tau'$ were the exact gap pattern of $B$, then its Oseledets filtration would be a $\tau'^\perp = (m-1, m-2, m-3)$-flag 
 $$\{0\}= F_{5} (B) (x) \subsetneq F_4 (B) (x) \subsetneq F_3 (B) (x)
 \subsetneq F_{2} (B) (x) \subsetneq F_1 (B) (x) = \R^m\,,$$ 
 for $\mu$-a.e. $x \in X$, thus defining a measurable function $F (B) \colon X \to \FF_{\tau'^\perp} (\R^m)$.
 
 The subspaces $F_4 (B) (x), F_3 (B) (x), F_2 (B) (x)$ and  $F_1 (B) (x)$ correspond to the Lyapunov exponents $L_4 (B), L_3 (B), L_2 (B)$ and $L_1 (B)$ respectively.
 
 In order to compare the Oseledets filtration of $B$ with that of $A$, we would need to ``forget" the extra subspace $F_2 (B) (x)$ corresponding to the Lyapunov exponent $L_2 (B)$, which appears precisely because the gap pattern $\tau'$ of $B$ is finer than that of $A$. In other words, we consider the projection  $F^{\tau} (B)$  of the Oseledets filtration $ F(B) $ to the space of coarser $\tau^\perp = (m-3, m-2)$-flags valued filtrations  
 $$\{0\}= F_{5} (B) (x) \subsetneq F_4 (B) (x) \subsetneq F_3 (B) (x)
 \subsetneq F_1 (B) (x) = \R^m\,.$$ 
 
Now  $F (A) (x)$ and  $F^{\tau} (B) (x)$ are both $\tau^\perp$-flags, and we may define a distance between them component-wise  (as points in the same Grassmann manifold). The distance between the measurable filtrations  $F (A)$ and  $F^{\tau} (B)$  as functions on $X$ will be the space average of the pointwise distances. 

\smallskip

Furthermore, the Oseledets decomposition $\dec (A)$ of the cocycle $A$ with exact $\tau = (2, 3)$ gap pattern, consists of a $2$-dimensional subspace $E_1 (A) (x)$ corresponding to $L_1 (A) = L_2 (A)$, a one dimensional subspace $E_2 (A) (x)$ corresponding to $L_3 (A)$, and an $m-3$-dimensional subspace $E_3 (A) (x)$ corresponding to the remaining (and equal) Lyapunov exponents. 

If a small perturbation $B$ of $A$ has (as above) the finer $\tau' = (1, 2, 3)$ gap pattern, then its Oseledets decomposition will consist of subspaces $E_1 (B) (x)$ (one dimensional, corresponding to $L_1 (B)$), $E_2 (B) (x)$ (one dimensional, corresponding to $L_2 (B)$), $E_3 (B) (x)$ (one dimensional, corresponding to $L_3 (B)$) and the subspace $E_4 (B) (x)$ ($m-3$ dimensional, corresponding to the remaining Lyapunov exponents).

In order to compare the Oseledets decompositions of $A$ and $B$, we would have to ``patch up" the first two Oseledets subspaces for $B$. In other words, we will consider the natural restriction $\dec^\tau (B)$ of the Oseledets decomposition $\dec (B)$, consisting of the subspaces $E_1 (B) \oplus E_2 (B)$, $E_3 (B)$, $E_4 (B)$.  

\medskip

We  make the obvious observation that for {\em two dimensional} (i.e. $\Mat (2, \R)$-valued) cocycles,  or for cocycles of any dimension with {\em simple} Lyapunov spectrum, these projection/restriction of the filtration /decomposition are not needed. 

\medskip

Let us now formally define the space of measurable filtrations.
 
Given two signatures
$\tau=(\tau_1,\ldots, \tau_k)$  and
$\tau'=(\tau_1',\ldots, \tau_{k'}')$, we say that $\tau$ refines $\tau'$,
and write $\tau\geq \tau'$, if 
$\{\tau_1,\ldots, \tau_k\}\supseteq \{\tau_1',\ldots, \tau_{k'}'\}$.

Given $\tau\geq \tau'$, there is a natural   projection $\rho_{\tau,\tau'}:\FF_\tau(\R^m)\to \FF_{\tau'}(\R^m)$,
defined by 
$$ \rho_{\tau,\tau'}(\filt) = 
\rho_{\tau,\tau'}(F_1,\ldots, F_k):= (F_{i_1},\ldots, F_{i_{k'}}) \;,$$
where $\tau_j'=\tau_{i_j}$ for $j=1,\ldots, k'$.

With respect to the following normalized distance  on the flag manifold $\FF_\tau(\R^m)$ (see (1.11) in~\cite{LEbook-chap2}), 
\begin{equation*} 
 d_\tau(\filt,\filt') =\max_{1\leq j\leq k} d(F_j,F_j')  
\end{equation*}
these projections are Lipschitz, with Lipschitz constant $1$.

Let $(X,\mathcal{F},\mu)$ be a probability space and $\transl:X\to X$ be an ergodic measure preserving transformation. 


We call {\em measurable filtration of $\R^m$} any mod $0$ equivalence class of an $\mathscr{F}$-measurable function  $\filt:X\to \FF(\R^m)$.
Two functions $\filt,\filt':X\to \FF(\R^m)$ are said to be equivalent mod $0$ when
$\filt(x)=\filt'(x)$ for $\mu$-a.e. $x\in X$. From now on we will identify each mod $0$ equivalence class with any of its
representative measurable functions.

Given any measurable filtration $\filt$ of $\R^m$, let $\tau (\filt) (x)$ denote the signature of the flag $\filt (x)$. We say that 
$\filt$ has a
$\transl$-invariant signature if $\tau (\filt) (x) = \tau (\filt) (T x)$
for $\mu$-a.e. $x\in X$. If this is the case, then by the ergodicity of $(\transl,\mu)$ the function $\tau (\filt) (x)$ is constant $\mu$-a.e.

Define $\Filt(X,\R^m)$ to be the space of mod $0$ equivalence classes of measurable filtrations
 with a $\transl$-invariant signature, which is a constant that we denote by $\tau(\filt)$. 
 
 We say that $\filt$ has a {\em $\tau$-pattern} when
$\tau(\filt)\geq \tau$.

Given a signature $\tau$, let us define
  $\Filatp{\tau}(X,\R^m)$ to be the subspace of  measurable filtrations in
$\Filt(X,\R^m)$ with a $\tau$-pattern.

By definition  $\Filatp{\tau}(X,\R^m)\subseteq \Filatp{\tau'}(X,\R^m)$, whenever $\tau\geq \tau'$.

Given $\filt\in \Filatp{\tau}(X,\R^m)$, the function
$$\filt^\tau(x):=\rho_{\tau(\filt),\tau}(\filt(x))$$
determines a measurable filtration with constant signature $\tau$, which will be referred to as the $\tau$-restriction of $\filt$. 

We endow $\Filatp{\tau}(X,\R^m)$  with the following distance  
\begin{equation}\label{delta:tau}
\dist_\tau(\filt, \filt'):= \int_X d_\tau\left( \filt^\tau(x),(\filt')^\tau(x)\right)\, d\mu(x)\;. 
\end{equation}

Finally, we endow the space $\Filt(X,\R^m)$ of all measurable filtrations of $\R^m$ with the topology
determined by the following neighborhood bases,
$$ \mathcal{V}_{\delta,\tau}(\filt):=\{\,
\filt'\in \Filatp{\tau}(X,\R^m)\,:\,
\dist_\tau(\filt,\filt')<\delta\, \} \;,$$
where $\delta>0$,  $\filt\in \Filt(X,\R^m)$   and $\tau=\tau(\filt)$.

We note that this topology is not metrizable.

\begin{proposition}\label{topology-meas-filt}
Let $\mathcal{C}$ be a topological space.
A map $\filt :\mathcal{C}  \to \Filt(X,\R^m)$  is continuous  w.r.t. this topology if and only if
for all $A\in\mathcal{C}$ such that $\filt(A)$ has a $\tau$-pattern, there exists a neighborhood $\mathscr{U}\subset \mathcal{C}$
of $A$ such that 
$\filt(\mathscr{U})\subseteq \Filatp{\tau}(X,\R^m)$ and the $\tau$-restricted function
$\filt^\tau\vert_{\mathscr{U}}:\mathscr{U}\to  \Filatp{\tau}(X,\R^m)$, $B\mapsto \filt^\tau(B)$, is continuous
w.r.t. the distance $\dist_\tau$ defined above.
\end{proposition}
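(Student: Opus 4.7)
The plan is to unpack the definition of the topology on $\Filt(X,\R^m)$ and translate continuity of $\filt$ at a point $A\in\mathcal{C}$ into a statement about the $\tau$-restriction of $\filt$, where $\tau$ ranges over signatures refined by $\tau(\filt(A))$. The essential tool is the fact that the projections $\rho_{\tau',\tau}:\FF_{\tau'}(\R^m)\to\FF_\tau(\R^m)$, defined whenever $\tau'\geq\tau$, are $1$-Lipschitz with respect to the normalized flag distances $d_{\tau'}$ and $d_\tau$, a fact recalled just before \eqref{delta:tau}. Integrating in $x$ shows that if $\filt(B)\in\Filatp{\tau'}(X,\R^m)$ with $\tau'\geq\tau$, then
\[
\dist_\tau\bigl(\filt^\tau(B),\filt^\tau(B')\bigr) \leq \dist_{\tau'}\bigl(\filt^{\tau'}(B),\filt^{\tau'}(B')\bigr),
\]
so a finer-scale $L^1$-control automatically yields a coarser-scale $L^1$-control.

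For the forward implication, assume $\filt:\mathcal{C}\to\Filt(X,\R^m)$ is continuous, and fix $A\in\mathcal{C}$ together with a signature $\tau$ for which $\filt(A)$ has a $\tau$-pattern. Setting $\tau_0:=\tau(\filt(A))\geq\tau$, continuity at $A$ applied to any basic neighborhood $\mathcal{V}_{\delta,\tau_0}(\filt(A))$ produces a neighborhood $\mathscr{U}\subset\mathcal{C}$ of $A$ with $\filt(\mathscr{U})\subseteq \Filatp{\tau_0}(X,\R^m)\subseteq \Filatp{\tau}(X,\R^m)$. To verify that $\filt^\tau|_\mathscr{U}$ is continuous at an arbitrary $B\in\mathscr{U}$, let $\tau_B:=\tau(\filt(B))\geq\tau$ and apply continuity of $\filt$ at $B$: for each $\delta>0$ there is a neighborhood $\mathscr{U}'\subset\mathscr{U}$ of $B$ such that $\filt(\mathscr{U}')\subseteq \mathcal{V}_{\delta,\tau_B}(\filt(B))$. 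Since $\tau_B\geq\tau$, the Lipschitz comparison above gives $\dist_\tau(\filt^\tau(B),\filt^\tau(B'))\leq \dist_{\tau_B}(\filt^{\tau_B}(B),\filt^{\tau_B}(B'))<\delta$ for all $B'\in\mathscr{U}'$, which is exactly continuity of $\filt^\tau|_\mathscr{U}$ at $B$.

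For the reverse implication, assume the property stated in the proposition and fix $A\in\mathcal{C}$. A basic neighborhood of $\filt(A)$ in $\Filt(X,\R^m)$ is $\mathcal{V}_{\delta,\tau_0}(\filt(A))$ with $\tau_0=\tau(\filt(A))$ and $\delta>0$. Apply the hypothesis with this $\tau_0$ to obtain a neighborhood $\mathscr{U}\subset\mathcal{C}$ of $A$ such that $\filt(\mathscr{U})\subseteq \Filatp{\tau_0}(X,\R^m)$ and $\filt^{\tau_0}|_\mathscr{U}$ is $\dist_{\tau_0}$-continuous at $A$. Observing that $\filt^{\tau_0}(A)=\filt(A)$ because $\tau(\filt(A))=\tau_0$, there is a sub-neighborhood $\mathscr{U}_0\subseteq\mathscr{U}$ of $A$ with $\dist_{\tau_0}(\filt(A),\filt^{\tau_0}(B))<\delta$ for every $B\in\mathscr{U}_0$, so $\filt(\mathscr{U}_0)\subseteq\mathcal{V}_{\delta,\tau_0}(\filt(A))$, establishing continuity of $\filt$ at $A$.

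The only subtle point—and what I expect to be the main obstacle to present cleanly rather than to prove—is the bookkeeping of the three potentially different signatures that appear: $\tau$ (a signature refined by $\tau(\filt(A))$, for which we want to verify continuity of $\filt^\tau|_\mathscr{U}$), the generic value $\tau(\filt(A))$ at the base point $A$, and the possibly strictly finer signature $\tau(\filt(B))$ at a nearby $B\in\mathscr{U}$. All three are reconciled by the $1$-Lipschitz projections $\rho_{\tau',\tau}$ together with the identity $\filt^\tau(B)=\rho_{\tau(\filt(B)),\tau}\bigl(\filt(B)\bigr)$, which follows immediately from the definition of the $\tau$-restriction.
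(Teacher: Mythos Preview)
Your proof is correct and follows essentially the same approach as the paper. The paper packages the forward implication more tersely by asserting that $\Filatp{\tau}(X,\R^m)$ is open in $\Filt(X,\R^m)$ and that the projection $\rho_\tau:\Filatp{\tau}(X,\R^m)\to\Filatp{\tau}(X,\R^m)$ is continuous, so that $\filt^\tau|_{\mathscr{U}}=\rho_\tau\circ\filt|_{\mathscr{U}}$ is continuous as a composition; you unfold this same mechanism explicitly via the $1$-Lipschitz property of the flag projections, which is precisely what underlies the paper's claim that $\rho_\tau$ is continuous.
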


\begin{proof}
Assume first that $\filt :\mathcal{C}  \to \Filt(X,\R^m)$  is continuous and take $A\in \mathcal{C}$ such that 
$\filt(A)\in \Filt(X,\R^m)$. Consider the neighborhood
$\mathcal{V}_{\delta,\tau}(\filt(A))$ of $\filt(A)$ where $\delta>0$.
By continuity of $\filt$, there exists a neighborhood $\mathscr{U}\subset \mathcal{C}$ of $A$ such that   $\filt(\mathscr{U})\subset
\mathcal{V}_{\delta,\tau}(\filt(A))\subset \Filatp{\tau}(X,\R^m)$.
By definition of the topology in $\Filt(X,\R^m)$, the set $\Filatp{\tau}(X,\R^m)$ is open in $\Filt(X,\R^m)$, and the projection 
$\rho_\tau:\Filatp{\tau}(X,\R^m)\to \Filatp{\tau}(X,\R^m)$,
$\rho_\tau(\filt)=\filt^\tau$, is continuous.
The restriction $\filt^\tau\vert_{\mathscr{U}}:\mathscr{U}\to  \Filatp{\tau}(X,\R^m)$, $B\mapsto \filt^\tau(B)$, is continuous because it coincides with the composition $\rho_\tau\circ \filt$.

The converse statement is a direct consequence of the definition.
\end{proof}

\medskip

Recall that  a $\tau$-decomposition is a family of linear subspaces $E_{\cdot}=\{E_i\}_{1\leq i\leq k+1}$ in $\Gr(\R^m)$ such that $\R^m=\oplus_{i=1}^{k+1} E_i$ and
$\dim E_i=\tau_i-\tau_{i-1}$ for all
$1\le i\leq k+1$. In~\cite{LEbook-chap2}, we denoted 
by $\Decompsp_\tau(V)$ the space of all  $\tau$-decompositions of $\R^m$.

Given $\tau\geq \tau'$, there is a natural   projection $\rho_{\tau,\tau'}:\Decompsp_\tau(\R^m)\to \Decompsp_{\tau'}(\R^m)$,
defined by 
$$ \rho_{\tau,\tau'}(E_{\cdot}) = 
\rho_{\tau,\tau'}(E_1,\ldots, E_{k+1}):= (E_{1}',\ldots, E_{k'+1}') \;,$$
where $E_j'=\oplus_{i_j\leq l <i_{j+1}} E_l$ and $\tau_{i_j}=\tau_j'$ for $j=1,\ldots, k'$.

On the  space of decompositions  $\Decompsp_\tau(\R^m)$ 
we consider the distance  
(see Definition 3.3 in~\cite{LEbook-chap2}), 
\begin{equation*} 
 d_\tau(E_{\cdot},E_{\cdot}') =\max_{1\leq j\leq k+1} d_{\tau_j-\tau_{j+1}}(E_j,E_j')  \;.
\end{equation*}
By Proposition 3.6 in~\cite{LEbook-chap2}, 
the projections $\rho_{\tau,\tau'}$ are locally Lipschitz.

An equivalence class mod $0$ of an $\mathscr{F}$-measurable function  $E_{\cdot}:X\to \Decompsp(\R^m):=\cup_{\tau} \Decompsp_\tau(\R^m)$ will be called a {\em measurable decomposition of $\R^m$}. 
Two measurable decompositions $E_{\cdot},E_{\cdot}':X\to \Decompsp(\R^m)$ are equivalent mod $0$ when
$E_{\cdot}(x)=E_{\cdot}'(x)$ for $\mu$-a.e. $x\in X$. As before, we will identify each mod $0$ equivalence class with any of its
representative measurable functions.

Given any measurable decomposition $E_{\cdot}$ of $\R^m$, 
its signature at a point $x\in X$ is the sequence of dimensions
$\tau=(\tau_1,\ldots, \tau_k)$, where $\tau_j= \dim \left(\oplus_{l\leq j} E_j(x)\right)$ for all $j=1,\ldots, k$.
We denote it  by $\tau (E_{\cdot}) (x)$. We say that 
$E_{\cdot}$ has a
$\transl$-invariant signature if $\tau (E_{\cdot}) (x) = \tau (E_{\cdot}) (T x)$
for $\mu$-a.e. $x\in X$. In this case,  by the ergodicity of $(\transl,\mu)$ the function $\tau (E_{\cdot}) (x)$ is constant $\mu$-a.e.

Define $\Dec(X,\R^m)$ to be the space of mod $0$ equivalence classes of measurable decompositions
 with a $\transl$-invariant signature, that we denote by $\tau(E_{\cdot})$. 
 
 We say that $E_{\cdot}$ has a {\em $\tau$-pattern} when
$\tau(E_{\cdot})\geq \tau$.

Given a signature $\tau$,  define
  $\Decatp{\tau}(X,\R^m)$ to be the subspace of  measurable decompositions in
$\Dec(X,\R^m)$ with a $\tau$-pattern.

By definition  $\Decatp{\tau}(X,\R^m)\subseteq \Decatp{\tau'}(X,\R^m)$, whenever $\tau\geq \tau'$.

Given $E_{\cdot}\in \Decatp{\tau}(X,\R^m)$, the function
$$E_{\cdot}^\tau(x):=\rho_{\tau(E_{\cdot}),\tau}(E_{\cdot}(x))$$
determines a measurable decomposition with constant signature $\tau$,  referred to as the $\tau$-restriction of $E_{\cdot}$. 

We endow $\Decatp{\tau}(X,\R^m)$  with the following distance  
\begin{equation}\label{delta:tau}
\dist_\tau(E_{\cdot}, E_{\cdot}'):= \int_X d_\tau\left( E_{\cdot}^\tau(x),(E_{\cdot}')^\tau(x)\right)\, d\mu(x)\;. 
\end{equation}

Finally, we endow the space $\Dec(X,\R^m)$ of all measurable decompositions of $\R^m$ with the topology
determined by the following neighborhood bases,
$$ \mathcal{V}_{\delta,\tau}(E_{\cdot}):=\{\,
E_{\cdot}'\in \Decatp{\tau}(X,\R^m)\,:\,
\dist_\tau(E_{\cdot},E_{\cdot}')<\delta\, \} \;,$$
where $\delta>0$,  $E_{\cdot} \in \Dec(X,\R^m)$   and $\tau=\tau(E_{\cdot})$.

Again, this topology is not metrizable, and a similar characterization
 of the continuity of a map $E_{\cdot} :\mathcal{C}  \to \Dec(X,\R^m)$ holds.

\begin{proposition}\label{topology-meas-decomp}
Let $\mathcal{C}$ be a topological space.
A map $E_{\cdot} :\mathcal{C}  \to \Dec(X,\R^m)$  is continuous  w.r.t. this topology if and only if
for all $A\in\mathcal{C}$ such that $E_{\cdot}(A)$ has a $\tau$-pattern, there exists a neighborhood $\mathscr{U}\subset \mathcal{C}$
of $A$ such that 
$E_{\cdot}(\mathscr{U})\subseteq \Decatp{\tau}(X,\R^m)$ and the $\tau$-restricted function
$E_{\cdot}^\tau\vert_{\mathscr{U}}:\mathscr{U}\to  \Decatp{\tau}(X,\R^m)$, $B\mapsto E_{\cdot}^\tau(B)$, is continuous
w.r.t. the distance $\dist_\tau$ defined above.
\end{proposition}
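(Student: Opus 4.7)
The plan is to mirror the argument already given for Proposition~\ref{topology-meas-filt}, since the topology on $\Dec(X,\R^m)$ was defined in complete analogy with the one on $\Filt(X,\R^m)$. Two ingredients carry over: first, by construction each $\Decatp{\tau}(X,\R^m)$ is open in $\Dec(X,\R^m)$, since the basic neighborhoods $\mathcal{V}_{\delta,\tau}(E_{\cdot})$ are themselves subsets of $\Decatp{\tau}(X,\R^m)$; second, the pointwise projection $\rho_{\tau',\tau}\colon\Decompsp_{\tau'}(\R^m)\to\Decompsp_{\tau}(\R^m)$ is locally Lipschitz by Proposition 3.6 in~\cite{LEbook-chap2}, so that the induced $\tau$-restriction map $E_{\cdot}\mapsto E_{\cdot}^{\tau}$ from $(\Decatp{\tau'}(X,\R^m),\dist_{\tau'})$ to $(\Decatp{\tau}(X,\R^m),\dist_{\tau})$ is continuous (indeed locally Lipschitz), by pointwise application of $\rho_{\tau',\tau}$ combined with dominated convergence in the integral defining $\dist_\tau$.

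For the forward implication, assume $E_{\cdot}\colon\mathcal{C}\to\Dec(X,\R^m)$ is continuous, and fix $A\in\mathcal{C}$ such that $E_{\cdot}(A)$ has a $\tau$-pattern. Every neighborhood $\mathcal{V}_{\delta,\tau}(E_{\cdot}(A))$ is contained in $\Decatp{\tau}(X,\R^m)$, hence by continuity there exists a neighborhood $\mathscr{U}\subseteq\mathcal{C}$ of $A$ with $E_{\cdot}(\mathscr{U})\subseteq\Decatp{\tau}(X,\R^m)$. The $\tau$-restricted map $E_{\cdot}^{\tau}\vert_{\mathscr{U}}$ is the composition of the continuous projection $E_{\cdot}\mapsto E_{\cdot}^{\tau}$ on $\Decatp{\tau(E_{\cdot}(A))}(X,\R^m)$ with the original continuous $E_{\cdot}\vert_{\mathscr{U}}$, and is therefore continuous into $(\Decatp{\tau}(X,\R^m),\dist_{\tau})$.

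For the converse, assume the stated $\tau$-restriction continuity at each point. Given $A\in\mathcal{C}$ and a basic neighborhood $\mathcal{V}_{\delta,\tau}(E_{\cdot}(A))$ with $\tau=\tau(E_{\cdot}(A))$, apply the hypothesis at $A$ with this $\tau$: there is a neighborhood $\mathscr{U}\subseteq\mathcal{C}$ of $A$ with $E_{\cdot}(\mathscr{U})\subseteq\Decatp{\tau}(X,\R^m)$ on which $B\mapsto E_{\cdot}^{\tau}(B)$ is continuous for $\dist_{\tau}$. Since $E_{\cdot}^{\tau}(A)=E_{\cdot}(A)$, after shrinking $\mathscr{U}$ we get $\dist_{\tau}(E_{\cdot}^{\tau}(B),E_{\cdot}(A))<\delta$ for every $B\in\mathscr{U}$, i.e.\ $E_{\cdot}(\mathscr{U})\subseteq\mathcal{V}_{\delta,\tau}(E_{\cdot}(A))$, which proves continuity of $E_{\cdot}$ at $A$.

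No step in this argument is deep: the whole statement is essentially a reformulation of how the neighborhood bases were chosen. The only technical point is the one already isolated above, namely that the induced restriction map $E_{\cdot}\mapsto E_{\cdot}^{\tau}$ is continuous for $\dist_{\tau}$. Here the decomposition setting is slightly subtler than the filtration one, because the relevant projection $\rho_{\tau',\tau}$ on $\Decompsp_{\tau'}(\R^m)$ is only \emph{locally} Lipschitz rather than globally $1$-Lipschitz; however, the uniform upper bound on the local Lipschitz constant on bounded pieces of $\Decompsp_{\tau'}(\R^m)$ provided by Proposition 3.6 in~\cite{LEbook-chap2}, together with dominated convergence applied to the defining integral of $\dist_{\tau}$, is exactly what is needed to pass from the pointwise statement to the integrated one. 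This is the only place where the proof is not literally the same as the filtration case.
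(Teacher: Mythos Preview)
The paper does not actually provide a proof of this proposition; it simply states it after noting that ``a similar characterization \ldots\ holds'', leaving the argument as a straightforward adaptation of the proof of Proposition~\ref{topology-meas-filt}. Your proof does exactly this adaptation and is correct; in fact you go slightly beyond the paper by isolating the one genuine difference from the filtration case, namely that the projections $\rho_{\tau',\tau}$ on decomposition spaces are only locally Lipschitz, and by explaining why this is harmless (pointwise continuity plus dominated convergence, using that $d_\tau$ is bounded, gives continuity of the integrated restriction map).
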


\subsection{Continuity of the Oseledets filtration}
\label{cof}
We denote by $\filt (A)$ the Oseledets filtration of a cocycle $A \in \cocycles_m$. If $A$ has a $\tau$ gap pattern, by the continuity of the Lyapunov exponents, any nearby cocycle $B$ has the same or a finer gap pattern $\tau' \ge \tau$. Let $\filt^{\tau} (B)$ denote the projection of the Oseledets filtration of $B$ to the space $\Filatp{\tau}(X,\R^m)$ of measurable filtrations with a $\tau$-pattern. We are now ready to phrase and to prove the continuity of the Oseledets filtration.

\begin{theorem}\label{cont-oseledets-filt-o}
Let $A \in \cocycles_m$ be a cocycle with a $\tau$ gap pattern. Then locally near $A$, the map $$\cocycles_m \ni B \mapsto \filt^{\tau} (B) \in \Filatp{\tau}(X,\R^m)$$ is continuous with a modulus of continuity $\omega (h) :=  [ \mesf \ (c \, \log \frac{1}{h}) ]^{1/2}$ for some constant $c = c (A) > 0$ and for some deviation measure function  $\mesf = \mesf (A)$ from the corresponding set of LDT parameters.

In fact, a stronger pointwise estimate holds:
$$\mu  \ \{  x \in X \colon d ( \filt^\tau (B_1) (x), \  \filt^\tau (B_2) (x) ) > \dist (B_1, B_2)^\alpha \}  
 < \omega (\dist (B_1, B_2))\,,$$
for any $B_1, B_2 \in \cocycles_m$ in a neighborhood of $A$, and for $\alpha = \alpha (A) >0$.   

\smallskip

Moreover, the map $\cocycles_m \ni A \mapsto \filt (A) \in \Filt(X,\R^m)$ is continuous everywhere.
\end{theorem}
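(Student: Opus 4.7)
The plan is to reduce the theorem to the continuity of the most expanding direction established in Theorem~\ref{thm:most-dir}, applied to the exterior power cocycles, and then to reassemble the Oseledets filtration from its graded components. Recall from the proof of Theorem~\ref{Oseledets non invertible} that each subspace of the Oseledets filtration of a cocycle is of the form $\medir{\infty}_k(\cdot)^\perp$, where $k$ ranges over the dimensions at which the Lyapunov spectrum has a gap. Consequently, the $\tau$-restriction $\filt^\tau(B)$ is an explicit flag whose components are $\medir{\infty}_{\tau_j}(B)(x)^\perp$ for $j=1,\ldots,k$ (read in the appropriate order). By the continuity of the Lyapunov exponents (Theorem 3.1 in \cite{LEbook}), every cocycle $B$ in a small enough neighborhood $\mathscr{U}$ of $A$ still has a gap pattern refining $\tau$; in particular each $\medir{\infty}_{\tau_j}(B)$ is well defined throughout $\mathscr{U}$, and the same formula describes $\filt^\tau(B)$ for all $B \in \mathscr{U}$.

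Next, I would extend Theorem~\ref{thm:most-dir} from the top direction to the most expanding $k$-plane by a standard exterior-power reduction. If $L_k(A) > L_{k+1}(A)$, then $L_1(\wedge_k A) > L_2(\wedge_k A)$; by Definition~\ref{def:cocycle:space} the space of cocycles is closed under $\wedge_k$, the map $B \mapsto \wedge_k B$ is locally Lipschitz, and by the standing assumptions $\wedge_k A$ is uniformly $L^2$-bounded and satisfies a uniform fiber-LDT estimate in a neighborhood. Theorem~\ref{thm:most-dir} applied to $\wedge_k A$ then produces the desired pointwise and $L^1$ continuity estimates for the map $B \mapsto \medir{\infty}(\wedge_k B) \in L^1(X,\Pp(\wedge_k \R^m))$. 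The Plücker embedding of $\Gr_k(\R^m)$ into $\Pp(\wedge_k \R^m)$ is bi-Lipschitz onto its image, so this translates to continuity of $B \mapsto \medir{\infty}_k(B)$; since orthogonal complementation is an isometry of Grassmannians, the same estimates pass to $B \mapsto \medir{\infty}_k(B)^\perp$.

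Finally, I would assemble the full flag. Because the flag distance is the maximum of the distances between corresponding components, a union bound over the indices $j=1,\ldots,k$ produces the claimed pointwise estimate for $d_\tau(\filt^\tau(B_1)(x), \filt^\tau(B_2)(x))$, with exceptional set of measure bounded by $k$ copies of $\omega_\mesf(\dist(B_1,B_2))$, which is absorbed into a single modulus $\omega$. Integration in $x$ then yields the $L^1$ (modulus of continuity) bound. The ``moreover'' statement follows immediately from Proposition~\ref{topology-meas-filt}: at an arbitrary cocycle $A$ with exact gap pattern $\tau$, continuity of the Lyapunov exponents confines $\filt(B)$ to the subspace $\Filatp{\tau}(X,\R^m)$ on a neighborhood, and the local $\dist_\tau$-continuity of $B \mapsto \filt^\tau(B)$ just proved is exactly the criterion in that proposition for continuity of $B \mapsto \filt(B)$ in $\Filt(X,\R^m)$.

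The main obstacle will be tracking uniformity of the LDT parameters (and of the Plücker bi-Lipschitz constants) under passage to the exterior power cocycles: the abstract axioms of $\cocycles$ together with the uniform fiber-LDT hypothesis are tailored to make this transfer work, but one has to verify that the parameters $\delta$, $\mesf$, $c$, $\alpha$ produced by Theorem~\ref{thm:most-dir} for each $\wedge_{\tau_j} A$ can be chosen in terms of the data of $A$ alone, so that the final modulus $\omega$ depends only on $A$.
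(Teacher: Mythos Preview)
Your proposal is correct and follows essentially the same approach as the paper: reduce to Theorem~\ref{thm:most-dir} via exterior powers, identify $\filt^\tau(B)$ with the flag of orthogonal complements $\medir{\infty}_{\tau_j}(B)^\perp$ by the construction in Theorem~\ref{Oseledets non invertible}, transfer the estimates through the Pl\"ucker embedding and orthogonal complementation, and invoke Proposition~\ref{topology-meas-filt} for the last assertion. The paper's proof is somewhat terser but structurally identical; your added remarks about the union bound over $j$ and the uniformity of parameters across the $\wedge_{\tau_j} A$ are correct elaborations of steps the paper leaves implicit.
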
 

\begin{proof}
Since $A$ has a $\tau = (\tau_1, \ldots, \tau_{k})$ gap pattern, $L_{\tau_{j}} (A) > L_{\tau_{j-1}} (A)$  for all indices $j$, so $L_1 (\wedge_{\tau_j} A) > L_2 ( \wedge_{\tau_j} A)$. We may then apply the continuity of the most expanding direction in Theorem~\ref{thm:most-dir} to $\wedge_{\tau_j} A$ and obtain that
$$\cocycles_m \ni B \mapsto \wedge_{\tau_j} B \mapsto \medir{\infty} (\wedge_{\tau_j} B)  \in L^1 (X, \Pp (\wedge_{\tau_j} \R^m))$$
is continuous at $A$, with a modulus of continuity of the form $\omega (h) =  [ \mesf \ (c \, \log \frac{1}{h}) ]^{1/2}$.

A similar pointwise estimate holds as well.

\medskip

The Oseledets filtration of $A$ was obtained in the proof of the Oseledets Theorem~\ref{Oseledets non invertible} as $F (A) (x) = \bigl[ \medir{\infty}_\tau (A) (x) \bigr]^\perp$, where
$$\medir{\infty}_{\tau} (A) (x) = \left( \medir{\infty}_{\tau_1} (A) (x), \ldots, \medir{\infty}_{\tau_k} (A) (x)  \right)\,,$$
and
$$\medir{\infty}_{\tau_j} (A) (x) = \Psi^{-1} (\medir{\infty} (\wedge_{\tau_j} A) (x) )\,.$$

Moreover, since for any nearby cocycle $B$ we clearly have
$$ \filt^{\tau} (B) (x) =  \left( \Psi^{-1} ( \medir{\infty} (\wedge_{\tau_1} B) (x) ), \ldots,   \Psi^{-1} (\medir{\infty} (\wedge_{\tau_k} B) (x) ) \right)^\perp\,,$$ 
the first two assertions follow from the continuity of the most expanding direction and the fact that the Pl\"ucker embedding $\Psi$ and the orthogonal complement $\perp$ are  isometries.
The third assertion is an immediate consequence of Proposition~\ref{topology-meas-filt}.
\end{proof}

\subsection{Continuity of the Oseledets decomposition}
\label{cod}
We denote by $\dec (A)$ the Oseledets decomposition of a cocycle $A \in \cocycles_m$. Assume that $A$ has a $\tau = (\tau_1, \ldots, \tau_k)$ gap pattern. By the construction in the proof of Theorem~\ref{thm:oseledets2}, we have
$$\dec (A) (x) = \medir{\infty}_\tau (A^\ast) (x) \sqcap \medir{\infty}_\tau (A) (x)^\perp\,.$$ 

By the continuity of the Lyapunov exponents, any nearby cocycle $B$ has the same or a finer gap pattern $\tau' \ge \tau$. Let $\dec^\tau (B) (x)$ denote the $\tau$-restriction of $\dec (B) (x)$ to the space of decompositions with signature $\tau$. Clearly we have
$$\dec^\tau (B) (x) = \medir{\infty}_\tau (B^\ast) (x) \sqcap \medir{\infty}_\tau (B) (x)^\perp\,.$$

We may immediately conclude from Subsection~\ref{cof}, or directly from he continuity of the most expanding direction derived in Subsection~\ref{direction} that the maps
$$B \mapsto \medir{\infty}_\tau (B^\ast) \quad \text{and} \quad B \mapsto  \medir{\infty}_\tau (B)^\perp$$
are continuous in a neighborhood of $A$, with an appropriate modulus of continuity.

However, this does not automatically guarantee the continuity of the intersection. Indeed, by Proposition 3.16 in~\cite{LEbook-chap2},  the intersection map
$\sqcap:\FF_\tau(V)\times \FF_{\tau^\perp}(V)\to \Decompsp_\tau(V)$ is locally Lipschitz, but with a Lipschitz constant that depends on the transversality measurement of the subspaces, which may blow up for some phases $x$. 

That is why we need to control these transversality measurements at {\em finite scale} first. We will employ a similar scheme as in the establishing of the  continuity of the most expanding direction in Subsection~\ref{direction}.

Recall from Subsection~\ref{met} the $n$-th scale partial functions
$\mostexp^{(n)}_\tau(B)$ on $X$ taking values on $\FF_\tau(\R^m)$,
$$ \mostexp^{(n)}_\tau(B)(x):=
\left\{ \begin{array}{lll}
\mostexp_\tau(B^{(n)}(x)) & & \text{ if } \ \rgap_\tau(B^{(n)}(x))>1 \\
\text{undefined} & &  \text{otherwise},
\end{array} \right.$$
where
\begin{align*}
\mostexp_\tau (\Bn{n} (x)) & = \left( \mostexp_{\tau_1}  (\Bn{n} (x)), \ldots, \mostexp_{\tau_k}  (\Bn{n} (x)) \right)\\
& = \left( \Psi^{-1} (\mostexp (\wedge_{\tau_1} \Bn{n} (x))), \ldots,  \Psi^{-1} (\mostexp (\wedge_{\tau_k} \Bn{n} (x))) \right) .
\end{align*}

Consider the exceptional sets defined in Subsection~\ref{direction} for each dimension $\tau_j$, that is, define
$$\B^\sharp_n (B) := \bigcup_{1\le j \le k} \B^\flat_n (\wedge_{\tau_j} B) .$$

Since $A$ has a $\tau = (\tau_1, \ldots, \tau_k)$ gap pattern, the estimates on the most expanding direction, namely Remark~\ref{bad-set-summary-o}, Proposition~\ref{prop-speed-conv-most-dir} and Proposition~\ref{finite-scale-cont-most-dir} are applicable to $\wedge_{\tau_j} B$, $1 \le j \le k$. We summarize the relevant results in the following remark.

\begin{remark}\label{summary-estimates-dec}\normalfont
There are parameters $\delta, \nzerobar$ and $\mesf$, depending only on $A$, such that the following hold for all cocycles $B$ with $\dist (B, A) < \delta$ and for all scales $n \ge \nzerobar$. 

\begin{enumerate}
\item  $\mostexp_\tau(B^{(n)}(x))$ is well defined for all phases $x \notin \B^\sharp_n (B)$. Moreover, for all such $x$  we have
\begin{align*}
\rgap_\tau(B^{(n)}(x)) &= \min_{1 \le j \le k} \rgap (\wedge_{\tau_j} \Bn{n} (x))  > \frac{1}{\varkappa_n}\,, \\
\alpha_\tau (\Bn{n} (T^{-n} x), \Bn{n} (x) ) & \asymp \min_{1 \le j \le k}   \frac{ \norm{ \wedge_{\tau_j} \Bn{2 n} (x)}}{\norm{ \wedge_{\tau_j} \Bn{n} (\transl^{-n} x)} \cdot \norm{\wedge_{\tau_j} \Bn{n} (x)}}   > \varepsilon_n\,. 
\end{align*}

\item The sequence of partial functions $\mostexp^{(n)}_\tau(B)$  converges $\mu$-a.e, as $n \to \infty$,  to a function $\mostexp^{(\infty)}_\tau(B) \colon X \to \FF_\tau(\R^m)$. 

\item For all phases $x \notin \B^\sharp_n (B)$, we have the following rate of convergence:
\begin{equation}\label{speed conv cod}
d_\tau \left(  \mostexp^{(n)}_\tau(B)(x),  \mostexp^{(\infty)}_\tau(B)(x) \right) < \frac{\varkappa_n}{\varepsilon_n}\,.
\end{equation}

\item The partial functions $\mostexp^{(n)}_\tau(B)$ satisfy the following finite scale uniform continuity property. Given $C_2 > 0$, there is $C_1 = C_1 (A, C_2) < \infty$ such that for any cocycles $B_i \in \cocycles_m$, $\dist (B_i, A) < \delta$ for $i=1, 2$, if $\dist (B_1, B_2) < e^{ - C_1 \, n}$, then for $x$ outside a set of measure $ < \mes_n$ we have:
\begin{equation}\label{finite scale cod}
d_\tau \left(  \mostexp^{(n)}_\tau(B_1)(x),  \mostexp^{(n)}_\tau(B_2)(x) \right) < e^{- C_2 \, n}.
\end{equation}
\end{enumerate}
\end{remark}

\begin{proof} The statements in item 1 above follow from \eqref{1-most-dir} and \eqref{3-most-dir} applied to  $ \wedge_{\tau_j} B$ for $1 \le j \le k$. 

Each component of the flag $\mostexp_\tau (\Bn{n} (x))$ converges, for $\mu$-a.e. $x \in X$, by Proposition~\ref{med:aec} and the fact that $B$ has the $\tau$ gap pattern.

The rate of convergence in item 3 is a consequence of Proposition~\ref{prop-speed-conv-most-dir} applied in each component of the flag $\mostexp_\tau (\Bn{n} (x))$, that is, applied to the exterior powers $ \wedge_{\tau_j} B$ for $1 \le j \le k$. The same argument holds for item 4. 

\end{proof}

\begin{remark} \normalfont Since $A$ has the $\tau$ gap pattern, so does $A^\ast$. Therefore, by possibly doubling the size of the exceptional set, we may assume that the rate of convergence \eqref{speed conv cod} holds for both $B$ and $B^\ast$. The same applies to the finite scale continuity \eqref{finite scale cod}. 
\end{remark}

We define a finite scale decomposition which will be shown to converge to the ($\tau$ restricted) Oseledets decomposition.

Consider the partial function on $X$ taking values in $\Decompsp_\tau(\R^m)$ and defined by
$$ \dec^{(n)} (B)(x):=
\left\{ \begin{array}{lll}
 \mostexp^{(n)}_\tau(B^\ast)(x) \sqcap  \mostexp^{(n)}_\tau(B)(x)^\perp& & \text{ if } \ \rgap_\tau(B^{(n)}(x))>1 \\
\text{undefined} & &  \text{otherwise}.
\end{array} \right.$$

Clearly this map is well defined for all  $x \notin \B^\sharp_n (B)$.

We begin by establishing a lower bound on the transversality measurement for the flags defining this finite scale decomposition.

\begin{lemma} \label{lemma transv bound cod}
For all $x \notin \B^\sharp_n (B)$ and $n \ge \nzerobar$ we have
\begin{equation} \label{eq transv cod}
\theta_{\sqcap}  \left(  \mostexp^{(n)}_\tau(B^\ast)(x),  \mostexp^{(n)}_\tau(B)(x)^\perp \right) \ge \varepsilon_n\,.
\end{equation}
\end{lemma}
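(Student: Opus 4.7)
The plan is to reduce the transversality $\theta_\sqcap$ between the two flags to a minimum of component ``aangles'', identify each such aangle with an expansion rift, and finally lower bound that rift using the uniform fiber-LDT estimates available for the exterior powers $\wedge_{\tau_j} B$. In fact, the target bound $\varepsilon_n = e^{-n \gapc(A)/5}$ is exactly the AP angle threshold introduced in \eqref{3-most-dir}, so the lemma will amount to reapplying that same bound in each exterior power.

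First I would invoke the formula already exploited in the proof of Theorem~\ref{thm:oseledets2}: since the two flags have complementary signatures, combining Definition~3.4 and Lemma~3.10 of~\cite{LEbook-chap2} with the Pl\"ucker embedding gives
\begin{equation*}
\theta_\sqcap\bigl( \mostexp^{(n)}_\tau(B^\ast)(x),\, \mostexp^{(n)}_\tau(B)(x)^\perp\bigr)
= \min_{1\le j\le k} \aangle\bigl( \mostexp(\wedge_{\tau_j}(B^\ast)^{(n)}(x)),\, \mostexp(\wedge_{\tau_j} B^{(n)}(x))\bigr).
\end{equation*}
Fix $j$ and abbreviate $M(y) := \wedge_{\tau_j} B^{(n)}(y)$. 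Using Remark~\ref{remark adjoint} for the cocycle $\wedge_{\tau_j} B$, the matrices $g_0 := M(T^{-n} x)$ and $g_1 := M(x)$ satisfy $g_0^\ast = \wedge_{\tau_j}(B^\ast)^{(n)}(x)$ and $g_1\, g_0 = \wedge_{\tau_j} B^{(2n)}(T^{-n} x)$. By item~(1) of Remark~\ref{summary-estimates-dec} both matrices have $\rgap > 1/\varkappa_n \gg 1$ for $x \notin \B^\sharp_n(B)$, and in this large-gap regime the same identity used in Lemma~\ref{lemma  alpha v* v >0} yields
\begin{equation*}
\aangle\bigl( \mostexp(g_0^\ast),\, \mostexp(g_1) \bigr) \asymp \frac{\norm{g_1\, g_0}}{\norm{g_1}\, \norm{g_0}} = \frac{\norm{\wedge_{\tau_j} B^{(2n)}(T^{-n} x)}}{\norm{\wedge_{\tau_j} B^{(n)}(x)}\cdot \norm{\wedge_{\tau_j} B^{(n)}(T^{-n} x)}}.
\end{equation*}

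Since $B$ has a $\tau$-gap pattern, each $\wedge_{\tau_j} B$ satisfies $L_1 > L_2$ and hence enjoys a uniform fiber-LDT estimate. After enlarging $\B^\sharp_n(B)$ by the finitely many $T$-shifts $T^n \B^{ldt}_n(\wedge_{\tau_j} B)$ and $T^n \B^{ldt}_{2n}(\wedge_{\tau_j} B)$ for $1 \le j \le k$ (which by $T$-invariance of $\mu$ preserves the bound $\mu[\B^\sharp_n(B)] < \mes_n^{1/2}$ up to a harmless constant), for any $x$ outside this enlarged set each of the three norms in the quotient above is within a factor $e^{\pm n \ep_0}$ of its Kingman prediction $e^{\ell \, L_1(\wedge_{\tau_j} B)}$ with $\ell \in \{n, n, 2n\}$ respectively. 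The exponents cancel exactly in the ratio, leaving a lower bound $e^{-O(n \ep_0)} \ge e^{-n \gapc(A)/5} = \varepsilon_n$, by our choice $\ep_0 = \gapc(A)/100$. Taking the minimum over $j$ yields the claim. The main obstacle is really the bookkeeping of exceptional sets, namely confirming that the enlargement of $\B^\sharp_n(B)$ just described still has measure $< \mes_n^{1/2}$; the aangle--rift identification (Step~2) is classical and already used in Lemma~\ref{lemma  alpha v* v >0}, while the LDT-based rift lower bound (Step~3) mirrors exactly the estimate underlying \eqref{3-most-dir} in the most-expanding-direction subsection.
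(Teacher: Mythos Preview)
Your approach is essentially the same as the paper's: reduce $\theta_\sqcap$ to component angles and then to expansion rifts of the exterior powers, which are bounded below by $\varepsilon_n$ via the LDT-based estimate \eqref{3-most-dir}. The paper is slightly more streamlined because it invokes Proposition~3.17 of~\cite{LEbook-chap2} (which gives $\theta_\sqcap \ge \alpha_\tau$ in one step) and the second inequality of item~(1) in Remark~\ref{summary-estimates-dec} (which already records $\alpha_\tau \ge \varepsilon_n$ for $x \notin \B^\sharp_n(B)$), so your proposed enlargement of $\B^\sharp_n(B)$ is already absorbed into that remark and need not be revisited.
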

\begin{proof}
This lower bound follows easily from Proposition 3.17 in~\cite{LEbook-chap2} and the second inequality in item 1 of Remark~\ref{summary-estimates-dec}.
\begin{align*}
\theta_{\sqcap}  \left(  \mostexp^{(n)}_\tau(B^\ast)(x),  \mostexp^{(n)}_\tau(B)(x)^\perp \right) & = 
\theta_{\sqcap}  \left(  \mostexp_\tau({B^\ast}^{(n)} (x) ),  \mostexp_\tau(B^{(n)} (x))^\perp \right) \\
& = \theta_{\sqcap}  \left(  \mostexp_\tau({B^{(n)} (\transl^{- n} x) }^\ast),  \mostexp_\tau(B^{(n)} (x))^\perp \right) \\
& \ge \alpha_\tau (\Bn{n} (T^{-n} x), \Bn{n} (x) ) \ge \varepsilon_n\,. \quad 
\end{align*}
\end{proof}

Next we establish the convergence to $\dec^\tau (B)$ of the finite scale decomposition introduced above.

\begin{proposition}[speed of convergence] \label{prop speed conv cod}
For all $x \notin \B^\sharp_n (B)$ and $n \ge \nzerobar$ we have
\begin{equation}\label{eq speed conv cod}
d \left(  \dec^{(n)} (B)(x), \dec^\tau (B) (x) \right) < \frac{\varkappa_n}{\varepsilon_n^2}\,.
\end{equation}
\end{proposition}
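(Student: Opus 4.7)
The plan is to combine three ingredients established earlier in the section: the finite-scale speed of convergence of the most expanding flags for both $B$ and $B^\ast$, the lower bound on the transversality measurement for the finite-scale flags (Lemma~\ref{lemma transv bound cod}), and the local Lipschitz continuity of the intersection operator $\sqcap$ from Proposition 3.16 in~\cite{LEbook-chap2}.

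First, fix $x \notin \B_n^\sharp(B)$ and $n \geq \nzerobar$. Set
$$F_1 := \mostexp^{(n)}_\tau(B^\ast)(x), \quad F_\infty := \mostexp^{(\infty)}_\tau(B^\ast)(x),$$
$$G_1 := \mostexp^{(n)}_\tau(B)(x)^\perp, \quad G_\infty := \mostexp^{(\infty)}_\tau(B)(x)^\perp.$$
By item (3) of Remark~\ref{summary-estimates-dec} applied to both $B$ and $B^\ast$ (using the observation that $A^\ast$ shares the $\tau$ gap pattern with $A$, so the exceptional set can be enlarged at most by a factor of two), together with the fact that $\perp$ is an isometry on flag manifolds of complementary signature,
$$d_\tau(F_1, F_\infty) < \frac{\varkappa_n}{\varepsilon_n} \quad \text{and} \quad d_{\tau^\perp}(G_1, G_\infty) < \frac{\varkappa_n}{\varepsilon_n}.$$

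Next, I would transfer the transversality bound from finite to infinite scale. Lemma~\ref{lemma transv bound cod} gives $\theta_\sqcap(F_1, G_1) \geq \varepsilon_n$. Since $\theta_\sqcap$ is Lipschitz in each of its arguments (it is built from gap-type angles, see Definition 3.4 in~\cite{LEbook-chap2}), and since $\varkappa_n/\varepsilon_n = e^{-3 n \gapc(A)/10} \cdot e^{n\gapc(A)/5} = e^{-n\gapc(A)/10} \ll \varepsilon_n$ for $n \geq \nzerobar$ (with $\nzerobar$ chosen suitably large), we deduce
$$\theta_\sqcap(F_\infty, G_\infty) \geq \varepsilon_n - 2\,\frac{\varkappa_n}{\varepsilon_n} \geq \tfrac{1}{2}\,\varepsilon_n.$$
In particular, $F_\infty \sqcap G_\infty$ is well-defined and equals $\dec^\tau(B)(x)$ by construction in the proof of Theorem~\ref{thm:oseledets2}.

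The remaining step is to apply Proposition 3.16 in~\cite{LEbook-chap2}, which asserts that $\sqcap : \FF_\tau \times \FF_{\tau^\perp} \to \Decompsp_\tau$ is Lipschitz with constant of order $1/\theta_\sqcap$ on the open set where transversality is bounded below. Along the segment of pairs $(F_1,G_1), (F_\infty, G_\infty)$, the transversality stays bounded below by $\tfrac{1}{2}\varepsilon_n$ by the previous step, so
$$d\bigl(\dec^{(n)}(B)(x),\, \dec^\tau(B)(x)\bigr) = d\bigl(F_1 \sqcap G_1,\, F_\infty \sqcap G_\infty\bigr) \lesssim \frac{1}{\varepsilon_n}\bigl[d_\tau(F_1,F_\infty) + d_{\tau^\perp}(G_1,G_\infty)\bigr] \lesssim \frac{\varkappa_n}{\varepsilon_n^2},$$
which is the desired conclusion (absorbing the implicit constant into $\nzerobar$, or equivalently replacing $\varkappa_n/\varepsilon_n^2$ by a multiple thereof which still decays exponentially).

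The main obstacle is bookkeeping around the transversality lower bound: the Lipschitz constant of $\sqcap$ blows up as the flags become non-transverse, so one must verify that the perturbation from finite to infinite scale is strictly smaller than the finite-scale transversality, i.e., $\varkappa_n/\varepsilon_n \ll \varepsilon_n$. This is precisely the inequality $\varkappa_n \ll \varepsilon_n^2$, which is the same condition $\varkappa_n/\varepsilon_n^2 \ll 1$ that appears repeatedly throughout the AP scheme, so it is automatically satisfied for $n \geq \nzerobar$.
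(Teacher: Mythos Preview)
Your proof is correct and follows essentially the same approach as the paper: both bound the flag distances via the speed-of-convergence estimate \eqref{speed conv cod}, use Lemma~\ref{lemma transv bound cod} for the finite-scale transversality lower bound, transfer it to the limiting flags (the paper invokes Proposition 3.15 in~\cite{LEbook-chap2} explicitly, you appeal to the Lipschitz property of $\theta_\sqcap$, which is the same thing), and then apply Proposition 3.16 in~\cite{LEbook-chap2}. The only differences are notational.
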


\begin{proof}
Fix the phase $x$ and the scale $n$. For simplicity of notation let
\begin{align*}
F:=  \mostexp^{(\infty)}_\tau(B^\ast)(x) \in  \FF_\tau (\R^m), \quad & F' := \mostexp^{(\infty)}_\tau(B)(x)^\perp \in  \FF_{\tau^\perp} (\R^m), \\
F_0:=  \mostexp^{(n)}_\tau(B^\ast)(x) \in  \FF_\tau (\R^m), \quad & F_0^{'} := \mostexp^{(n)}_\tau(B)(x)^\perp \in  \FF_{\tau^\perp} (\R^m).
\end{align*}

With these notations we have $ \dec^\tau (B) (x) = F \sqcap F'$ and $ \dec^{(n)} (B)(x) = F_0 \sqcap F_0^{'}$.

By Proposition 3.16 in~\cite{LEbook-chap2}, we have
\begin{align}
& d \left(  \dec^{(n)} (B)(x), \dec^\tau (B) (x) \right)   =  d (F_0 \sqcap F_0^{'}, \, F \sqcap F') \notag\\
& \qquad  \le  \max\left\{  \frac{1}{\theta_\sqcap (F_0,F_0')},
 \frac{1}{\theta_\sqcap (F,F')} \right\} \,( d_{\tau}(F_0,F) + d_{\tau^\perp}(F_0',F') ) \label{eq0 cod}\,. 
\end{align}

Applying \eqref{speed conv cod} to $B^\ast$ we get:
\begin{equation}\label{eq1 cod}
d_\tau (F_0, F) = d_\tau  \left(  \mostexp^{(n)}_\tau(B^\ast)(x),  \mostexp^{(\infty)}_\tau(B^\ast)(x) \right) < \frac{\varkappa_n}{\varepsilon_n}\,,
\end{equation}
while applying \eqref{speed conv cod} to $B$ and using the fact the the orthogonal complement $\perp$ is an isometry, we get:
\begin{align}
d_{\tau^\perp} (F_0', F') &= d_{\tau^\perp}  \left(  \mostexp^{(n)}_\tau(B)(x)^\perp,  \, \mostexp^{(\infty)}_\tau(B)(x)^\perp \right) \notag \\
& = d_\tau  \left(  \mostexp^{(n)}_\tau(B)(x),  \, \mostexp^{(\infty)}_\tau(B)(x) \right) < \frac{\varkappa_n}{\varepsilon_n} \label{eq2 cod}\,.
\end{align}

By Lemma~\ref{lemma transv bound cod} we have
\begin{equation} \label{eq3 cod}
\theta_{\sqcap}  (F_0, F _0') = \theta_{\sqcap}  \left(  \mostexp^{(n)}_\tau(B^\ast)(x),  \, \mostexp^{(n)}_\tau(B)(x)^\perp \right) \ge \varepsilon_n\,,
\end{equation}
and by Proposition 3.15 in~\cite{LEbook-chap2} combined with \eqref{eq1 cod} and \eqref{eq2 cod} we have:
\begin{align} 
\theta_{\sqcap}(F,F') & \geq \theta_{\sqcap}(F_0,F_0')
- d_{\tau}(F,F_0)-d_{\tau^\perp}(F',F_0') \notag \\
& \ge \varepsilon_n - \frac{\varkappa_n}{\varepsilon_n}  - \frac{\varkappa_n}{\varepsilon_n}  \more \varepsilon_n\,.\label{eq4 cod} 
\end{align}

We conclude by combining  \eqref{eq0 cod}-\eqref{eq4 cod}. 
\end{proof}

\begin{remark}\normalfont
The proposition above shows in particular that the partially defined finite scale decompositions $ \dec^{(n)} (B)(x)$ converge for $\mu$-a.e. $x$ to the $\tau$-restriction $\dec^\tau (B) (x)$ of the Oseledets decomposition of $B$.
\end{remark}

\begin{proposition}[finite scale continuity] \label{prop finite scale cod}
There are constants $C_1 = C_1 (A) < \infty$ and $C_3 = C_3 (A) > 0$ such that for any cocycles $B_i \in \cocycles_m$ with $\dist (B_i, A) < \delta$ for $i=1, 2$, if $\dist (B_1, B_2) < e^{ - C_1 \, n}$, then for $x$ outside a set of measure $ < \mes_n$ and $n \ge \nzerobar$ we have:
\begin{equation}\label{eq finite scale cod}
d \left(  \dec^{(n)} (B_1)(x),  \,  \dec^{(n)} (B_2)(x)\right) < e^{- C_3 \, n}.
\end{equation}
\end{proposition}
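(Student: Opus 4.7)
The plan is to mimic the proof of Proposition~\ref{prop speed conv cod}, but at the fixed finite scale $n$, replacing the speed of convergence estimates for a single cocycle by the finite scale continuity estimates comparing $B_1$ and $B_2$. The two main ingredients are: the finite scale continuity of the most expanding $\tau$-flag, namely \eqref{finite scale cod} in item (4) of Remark~\ref{summary-estimates-dec}, applied at the same scale $n$ to both $B_i$ and its adjoint $B_i^\ast$; and the lower bound on the transversality measurement $\theta_\sqcap$ provided by Lemma~\ref{lemma transv bound cod}. The Lipschitz property of the intersection map $\sqcap$ (Proposition 3.16 in~\cite{LEbook-chap2}) will then tie these together.

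More precisely, for $i=1,2$ I would introduce the shorthand
$$F_i := \mostexp^{(n)}_\tau(B_i^\ast)(x) \in \FF_\tau(\R^m), \qquad F_i' := \mostexp^{(n)}_\tau(B_i)(x)^\perp \in \FF_{\tau^\perp}(\R^m),$$
so that $\dec^{(n)}(B_i)(x) = F_i \sqcap F_i'$. Fix any $C_2$ satisfying $C_2 > \gapc(A)/5$ and let $C_1 = C_1(A, C_2)$ be the constant supplied by Remark~\ref{summary-estimates-dec} (item 4). Excluding the phases in the union over $i=1,2$ of the finite scale exceptional sets from \eqref{finite scale cod} applied to both $B_i$ and $B_i^\ast$, together with the sets $\B_n^\sharp(B_i)$ from Lemma~\ref{lemma transv bound cod} — a union still of measure $\lesssim \mes_n$ — and using that the orthogonal complement is an isometry of flag manifolds, we obtain
$$d_\tau(F_1, F_2) + d_{\tau^\perp}(F_1', F_2') < 2\, e^{-C_2 n}, \qquad \theta_\sqcap(F_i, F_i') \ge \varepsilon_n = e^{-n\, \gapc(A)/5}.$$

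Invoking Proposition 3.16 in~\cite{LEbook-chap2} on the local Lipschitz property of the intersection map,
\begin{align*}
d\bigl(\dec^{(n)}(B_1)(x),\, \dec^{(n)}(B_2)(x)\bigr) &= d(F_1\sqcap F_1',\, F_2\sqcap F_2') \\
&\lesssim \max_{i=1,2}\frac{1}{\theta_\sqcap(F_i, F_i')}\,\bigl(d_\tau(F_1,F_2) + d_{\tau^\perp}(F_1', F_2')\bigr) \\
&\lesssim \varepsilon_n^{-1} \cdot e^{-C_2 n} \lesssim e^{-(C_2 - \gapc(A)/5)\, n} =: e^{-C_3\, n},
\end{align*}
with $C_3 = C_2 - \gapc(A)/5 > 0$, which gives the desired bound (after absorbing the implicit constant by slightly shrinking $C_3$ if necessary, or by enlarging $n \ge \nzerobar$).

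I do not anticipate a real obstacle here: the work is structural, the ingredients are already in place, and the same slight enlargement of the exceptional set from $\mes_n$ to a constant multiple of $\mes_n$ used in the adjoint remark before Proposition~\ref{prop speed conv cod} applies verbatim. The only mild subtlety is ensuring that the lower bound on $\theta_\sqcap$ is available simultaneously for both cocycles $B_1, B_2$, which is automatic since $\B_n^\sharp(B_i)$ only depends on $B_i$ (through $\wedge_{\tau_j} B_i$) and is controlled uniformly by $A$ via \eqref{1-most-dir} and \eqref{3-most-dir}.
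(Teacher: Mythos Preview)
Your proposal is correct and follows essentially the same approach as the paper's own proof: the same shorthand $F_i, F_i'$, the same invocation of item (4) of Remark~\ref{summary-estimates-dec} for the finite scale flag continuity, Lemma~\ref{lemma transv bound cod} for the transversality lower bound, and Proposition 3.16 in~\cite{LEbook-chap2} for the Lipschitz property of $\sqcap$. The only cosmetic difference is the threshold you pick for $C_2$ (the paper takes $C_2 > \gapc(A)/2$ rather than $\gapc(A)/5$), which is immaterial.
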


\begin{proof}
Let $C_2 > \gapc (A)/2$. We apply item 4 of Remark~\ref{summary-estimates-dec}. There is $C_1 = C_1 (A)$ such that for any cocycles $B_i \in \cocycles_m$ with $\dist (B_i, A) < \delta$ for $i=1, 2$, there is a set of phases of measure $< \mes_n$ such that outside of that set,  
 \eqref{finite scale cod} holds for both $B_1, B_2$ and $B_1^\ast, B_2^\ast$. 
 
 Fix such a phase $x$, and to simplify notations, for $i=1, 2$ let
 $$F_i := \mostexp^{(n)}_\tau(B_i^\ast) (x), \quad F_i' := \mostexp^{(n)}_\tau(B_i)(x)^\perp\,,$$
 hence $ \dec^{(n)} (B_i)(x) = F_i \sqcap F_i'$.
 
 By Proposition 3.16 in~\cite{LEbook-chap2}, we have
\begin{align}
& d \left(  \dec^{(n)} (B_1)(x), \,  \dec^{(n)} (B_2)(x)\right)    = d (F_1 \sqcap F_1', \, F_2 \sqcap F_2') \notag\\
& \qquad   \le  \max\left\{  \frac{1}{\theta_\sqcap (F_1,F_1')},
 \frac{1}{\theta_\sqcap (F_2,F_2')} \right\} \,( d_{\tau}(F_1,F_2) + d_{\tau^\perp}(F_1',F_2') ) \label{eq7 cod}\,. 
\end{align}
 
Applying  \eqref{finite scale cod} to $B_1^\ast, B_2^\ast$ we get
\begin{equation}\label{eq 8 cod}
d_{\tau}(F_1,F_2)  = d_{\tau} \left(  \mostexp^{(n)} (B_1^\ast)(x), \,  \mostexp^{(n)} (B_2^\ast)(x)\right) < e^{-C_2 \, n},
\end{equation}
and applying  \eqref{finite scale cod} to $B_1, B_2$ we get
\begin{align}
d_{\tau^\perp}(F_1',F_2')  & = d_{\tau^\perp} \left(  \mostexp^{(n)} (B_1)(x)^\perp, \, \mostexp^{(n)} (B_2)(x)^\perp  \right) \notag \\
& = d_{\tau} \left(  \mostexp^{(n)} (B_1)(x), \,  \mostexp^{(n)} (B_2)(x)\right)  < e^{-C_2 \, n}. \label{eq 9 cod}
\end{align} 

By Lemma~\ref{lemma transv bound cod} we have, for $i=1, 2$:
\begin{equation} \label{eq10 cod}
\theta_{\sqcap}  (F_i, F _i') = \theta_{\sqcap}  \left(  \mostexp^{(n)}_\tau(B_i^\ast)(x),  \, \mostexp^{(n)}_\tau(B_i)(x)^\perp \right) \ge \varepsilon_n = e^{- n \, \gapc(A)/2}\,.
\end{equation}

Combining \eqref{eq7 cod}-\eqref{eq10 cod} we conclude:
$$d \left(  \dec^{(n)} (B_1)(x),  \,  \dec^{(n)} (B_2)(x)\right)  \less  e^{n \, \gapc(A)/2} \, e^{-C_2 \, n}  < e^{-C_3 \, n},$$
for an appropriate constant $C_3$, which proves the proposition.  
\end{proof}

We formulate the ACT for the Oseledets decomposition.

\begin{theorem} \label{thm abstract cod}
Let $A \in \cocycles_m$ be a cocycle with a $\tau$ gap pattern. Then locally near $A$, the map $$\cocycles_m \ni B \mapsto \dec^\tau (B) \in \Dec_\tau (X, \R^m)$$ is continuous with a modulus of continuity $\omega (h) :=  [ \mesf \ (c \, \log \frac{1}{h}) ]^{1/2}$ for some constant $c = c (A) > 0$ and for some deviation measure function  $\mesf = \mesf (A)$ from the corresponding set of LDT parameters.


In fact, a stronger pointwise estimate holds:
$$\mu  \ \{  x \in X \colon d ( \dec^\tau (B_1) (x), \  \dec^\tau (B_2) (x) ) > \dist (B_1, B_2)^\alpha \}  
 < \omega (\dist (B_1, B_2))\,,$$
for any $B_1, B_2 \in \cocycles_m$ in a neighborhood of $A$, and for  $\alpha = \alpha (A) >0$.

Moreover, the map $\cocycles_m \ni A \mapsto \dec (A) \in \Dec(X, \R^m)$ is continuous everywhere.
\end{theorem}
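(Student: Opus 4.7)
The plan is to mirror the scheme used in the proof of Theorem~\ref{thm:most-dir}, now working with the finite scale decompositions $\dec^{(n)}(B)$ in place of the most expanding direction, and drawing on Propositions~\ref{prop speed conv cod} and~\ref{prop finite scale cod} as the two pillars of the argument.

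First, given cocycles $B_1, B_2 \in \cocycles_m$ with $\dist(B_i, A) < \delta$, I set $h := \dist(B_1, B_2)$ and choose the intermediate scale $n \in \N$ so that $h \asymp e^{-C_1 n}$, where $C_1 = C_1(A)$ is the constant from Proposition~\ref{prop finite scale cod}; shrinking $\delta$ if necessary ensures $n \geq \nzerobar$. By Proposition~\ref{prop finite scale cod}, outside a set of measure $< \mes_n$ one has
$$d\bigl( \dec^{(n)}(B_1)(x), \dec^{(n)}(B_2)(x) \bigr) < e^{-C_3 n},$$
and we are free to make $C_3$ as large as we wish by taking $C_2$ large in the finite scale continuity. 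By Proposition~\ref{prop speed conv cod} applied to each $B_i$, outside the set $\B_n^\sharp(B_1) \cup \B_n^\sharp(B_2)$, whose measure is $\less \mes_n^{1/2}$, one has
$$d\bigl( \dec^{(n)}(B_i)(x), \dec^\tau(B_i)(x) \bigr) < \frac{\varkappa_n}{\varepsilon_n^2} = e^{-n \gapc(A)/10}.$$
The triangle inequality then yields, outside a set of measure $\less \mes_n^{1/2}$,
$$d\bigl( \dec^\tau(B_1)(x), \dec^\tau(B_2)(x) \bigr) < e^{-c_0 n} \asymp h^{\alpha},$$
for constants $c_0 = c_0(A) > 0$ and $\alpha := c_0/C_1 > 0$. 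Since $n \asymp (1/C_1)\log(1/h)$, the exceptional set has measure $\less [\mesf(c \log(1/h))]^{1/2} = \omega(h)$, which proves the pointwise statement.

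The $\dist_\tau$-continuity of $B \mapsto \dec^\tau(B)$ then follows by integration in $x$, as at the end of the proof of Theorem~\ref{thm:most-dir}: on the good set one uses the estimate $h^\alpha$, on the bad set one uses the fact that the $d_\tau$ diameter of $\Decompsp_\tau(\R^m)$ is bounded, and since $\mesf$ decays at most exponentially, $\omega$ decays at most like a power of $h$, so $h^\alpha < \omega(h)$ as $h \to 0$. For the final assertion, the continuity everywhere of $\cocycles_m \ni A \mapsto \dec(A) \in \Dec(X,\R^m)$, I invoke Proposition~\ref{topology-meas-decomp}: given $A$ with gap pattern $\tau = \tau(\dec(A))$, the continuity of the Lyapunov exponents guarantees that every $B$ in a sufficiently small neighborhood $\mathscr{U}$ of $A$ has the same or a finer gap pattern, so $\dec(\mathscr{U}) \subseteq \Decatp{\tau}(X,\R^m)$, and the $\dist_\tau$-continuity of $B \mapsto \dec^\tau(B)$ on $\mathscr{U}$ is exactly what was just proved.

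The principal technical obstacle lies upstream of the theorem: the intersection map $\sqcap$ is only locally Lipschitz, with a Lipschitz constant that blows up as the transversality $\theta_\sqcap$ of the two flags tends to zero. Controlling $\theta_\sqcap$ uniformly at finite scale is precisely the content of Lemma~\ref{lemma transv bound cod}, whose lower bound $\theta_\sqcap \geq \varepsilon_n$ comes from the angle estimate~\eqref{3-most-dir} and ultimately from the uniform fiber-LDT hypothesis. Once that bound is absorbed into the proofs of Propositions~\ref{prop speed conv cod} and~\ref{prop finite scale cod} (where it competes with the rates $\varkappa_n/\varepsilon_n$ and $e^{-C_2 n}$, respectively), the deduction of Theorem~\ref{thm abstract cod} becomes the clean scale-matching argument above.
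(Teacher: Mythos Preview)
Your proof is correct and follows essentially the same approach as the paper: the paper's own proof simply states that the first two assertions follow from Propositions~\ref{prop speed conv cod} and~\ref{prop finite scale cod} ``exactly the same way'' as in Theorem~\ref{thm:most-dir}, and that the third follows from Proposition~\ref{topology-meas-decomp}. You have merely unpacked that reference in full detail, with the same scale-matching and triangle-inequality argument.
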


\begin{proof}
The first two assertions are derived from the speed of convergence in Proposition~\ref{prop speed conv cod} and the finite scale continuity in Proposition~\ref{prop finite scale cod} exactly the same way we derived the continuity of the most expanding direction in Theorem~\ref{thm:most-dir}. 

The third assertion is an immediate consequence of Proposition~\ref{topology-meas-decomp}. 
\end{proof}

\subsection*{Acknowledgments}

The first author was supported by 
Funda\c{c}\~{a}o  para a  Ci\^{e}ncia e a Tecnologia, 
UID/MAT/04561/2013.

The second author was supported by the Norwegian Research Council project no. 213638, "Discrete Models in Mathematical Analysis".

\bigskip

\bibliographystyle{amsplain} 

\def\cprime{$'$}
\providecommand{\bysame}{\leavevmode\hbox to3em{\hrulefill}\thinspace}
\providecommand{\MR}{\relax\ifhmode\unskip\space\fi MR }
\providecommand{\MRhref}[2]{%
  \href{http://www.ams.org/mathscinet-getitem?mr=#1}{#2}
}
\providecommand{\href}[2]{#2}

\end{document}